\documentclass[reqno,10pt]{amsart}

\usepackage{amsmath, amsthm, amsfonts}
\usepackage{pdfsync}
\usepackage{parskip}
\usepackage{hyperref}
\usepackage{mathrsfs}  
\usepackage{mathtools}
\usepackage{mdframed}
\usepackage{tikz}
\overfullrule=4pt
\numberwithin{equation}{section}
\bibliographystyle{plain}

\newtheorem{theorem}{Theorem}[section]
\newtheorem{prop}[theorem]{Proposition}
\newtheorem{definition}[theorem]{Definition}

\newtheorem{cor}[theorem]{Corollary}
\newtheorem{example}[theorem]{Example}
\theoremstyle{remark}
\newtheorem{remark}[theorem]{Remark}

\def\M{\mathsf{M}}
\def\sM{\mathscr{M}}
\def\p{\mathsf{p}}
\def\d{{\sf d}}

\def\ev{\mathsf{C}}
\def\cH{\mathcal{H}}
\def\sH{\mathscr{H}}
\def\cE{\mathcal{E}}

\def\J{\mathbb{J}}

\def\R{\mathbb{R}}
\def\C{\mathbb{C}}
\def\H{\mathbb{H}^{m}}

\def\N{\mathbb{N}}
\def\Nz{\mathbb{N}_0}
\def\A{\mathfrak{A}}
\def\K{\frak{K}}

\def\bE{\mathbb{E}}

\def\L{\mathcal{L}}
\def\Lis{\mathcal{L}{\rm{is}}}
\def\F{\mathfrak{F}}

\def\bbF{\mathbb{F}}
\def\bW{\boldsymbol{W}}
\def\bu{\boldsymbol{u}}

\def\B{\mathbb{B}}

\def\Ok{\mathsf{O}_{\kappa}}

\def\vpk{\varphi_{\kappa}}
\def\psk{\psi_{\kappa}}

\def\Q{\mathsf{Q}^{m}}
\def\Qk{\mathsf{Q}^{m}_{\kappa}}

\def\cA{\mathcal{A}}
\def\sA{\mathscr{A}}

\def\Aw{\mathscr{A}_\omega}
\def\cB{\mathcal{B}}
\def\fB{\mathfrak{B}}

\def\Bw{\mathcal{B}_\omega}

\def\aw{\mathfrak{a}_\omega}

\def\kf{\psi^{\ast}_{\kappa}}

\def\Rp{{\rm{Re}}}
\def\id{{\rm{id}}}

\def\gd{{\rm{grad}}}

\def\div{{\rm div}}

\def\sg{{\rm{sign}}}
\def\Hom{{\text{Hom}}}

\def\RS{\mathcal{RS}}
\def\cHi{\mathcal{H}^\infty}
\def\S{\mathcal{S}}

\def\Ric{{\rm Ric}}

\hyphenation{Bernstein}
\begin{document}

\title[The Yamabe flow on incomplete manifolds]{The Yamabe flow on incomplete manifolds}

\author[Y. Shao]{Yuanzhen Shao}
\address{Department of Mathematics,
         Purdue University, 
         150 N. University Street, 
          West La\-fayet\-te, IN 47907-2067, USA}
\email{shao92@purdue.edu}

\subjclass[2010]{35K55, 35K67, 35R01, 53C21, 53C44, 58J99}
\keywords{singular parabolic equations, maximal $L_p$-regularity, incomplete Riemannian manifolds, geometric evolution equations, the Yamabe flow}

\begin{abstract}
This article is concerned with developing an analytic theory for second order nonlinear parabolic equations on singular manifolds.
Existence and uniqueness of solutions in an $L_p$-framework is established by maximal regularity tools. 
These techniques are applied to the Yamabe flow. 
It is proven that the Yamabe flow admits a unique local solution within a class of incomplete initial metrics. 
\end{abstract}
\maketitle

\section{\bf Introduction}

Nowadays, there is a rising interest in the study of differential operators on manifolds with singularities, which is motivated by a variety of applications from applied mathematics, geometry and topology. 
All the work is more or less related to the seminal paper
by V.A. Kondrat'ev \cite{Kon67}. 
Among the tremendous amount of literature on this topic, I would like to mention two lines of research on pseudo-differential calculus of differential operators of Fuchs type, which have been introduced independently by R.B.~Melrose \cite{Mel81, Mel93} and B.-W.~Schulze \cite{NazSalSchSte06, Sch94, Sch97, SchSei02}. 

One important direction of research is connected with the so-called $b$-calculus and its generalizations on manifolds with cylindrical ends. See \cite{Mel81, Mel93}. Many authors have been very active in this direction.

Research along another line, known as cone differential operators, has also been known for a long time. 
Operators in this line of research are modelled on conical manifolds. %The investigation of conical singularities was initiated by J.~Cheeger in \cite{Chg79, Chg80, Chg83}, and then continued by many other authors. 
During the recent decade, many mathematicians have applied analytic tools like bounded imaginary powers, $\cHi$-calculus and $\mathscr{R}$-sectoriality, see Section~4.1 for precise definitions, to study the realizations of cone differential operators on Mellin-Sobolev spaces and  to investigate many interesting nonlinear parabolic problems on conical manifolds. See for instance \cite{CorSchSei03, RoiSch13, RoiSch14, RoiSch15}.
A comparison between the $b$-calculus and the cone algebra can be found in \cite{LauSei99}.

%However, for higher order singularities, the corresponding algebra becomes far from being elementary, although many ideas and structures can be extracted, e.g., from the calculus of boundary value problems, c.f., \cite{LiuWit04, Sch94, SchSei02}.  

There has been more recent progress in understanding elliptic operators on manifolds with higher order singularities, e.g., manifolds with edge ends. The reader may refer to \cite{MarSchz10, Maz91, MazVer14, Sch97, SchSei02, SchzSei06} for more details.
The amount of research on pseudo-differential calculus of differential operators of Fuchs type is enormous, and thus it is literally impossible to list all the work.

Geometric evolution equations by now are a well-established tool in the analysis of geometric and topological problems,  and thus forms probably the most important class of differential equations on Riemannian manifolds. 
In this article, our focus will mainly be the Yamabe flow on incomplete manifolds. 
As an alternative approach to the Yamabe problem, R.~Hamilton introduced the  Yamabe flow, which asks whether a metric, driven by this flow converges conformally to one  metric with constant scalar curvature.
More precisely, on a Riemannian manifold $(\M,g_0)$, the Yamabe flow studies the conformal evolution of metrics driven by the following rule. 
\begin{equation}
\label{S1: YF1}
\partial_t g=-R_g g \quad \text{on }\M_T,\quad g(0)=g_0\quad \text{on }\M, 
\end{equation}
where $R_g$ is the scalar curvature with respect to the metric $g$, and $\M_T:=\M\times [0,T]$. Let $g=u^{\frac{4}{m-2}}g_0$. This flow is equivalent to the following scalar quasilinear parabolic equation.
\begin{equation}
\label{S1: YF2}
\partial_t u =u^{-\frac{4}{m-2}}\Delta_{g_0} u -c(m) u^{\frac{m-6}{m-2}}R_{g_0}\quad \text{on }\M_T,\quad u(0)={\bf 1}_\M\quad \text{on }\M. 
\end{equation}
On compact closed manifold  $(\M,g_0)$, the short time existence of the Yamabe flow is just a consequence of the positivity of the conformal factor $u$ and the compactness of $(\M,g_0)$. 
Nevertheless, the theory for the Yamabe flow on non-compact manifolds is far from being settled. 
Even local well-posedness is only established for restricted situations. 
Its difficulty can be observed from the fact that, losing the compactness of $(\M,g_0)$, equation~\eqref{S1: YF2} can exhibit degenerate and singular behaviors simultaneously. The investigation of the Yamabe flow on non-compact manifolds was initiated by L. Ma and Y. An in \cite{MaAn99}. 
Later, conditions on extending local in time solutions were explored in \cite{MaCheZhu12}. 
In \cite{MaAn99}, the authors showed that
for a complete closed non-compact Riemannian manifold $(\M,g_0)$ with Ricci curvature bounded from below and with a uniform bound on the scalar curvature in the sense that: 
$$\Ric_{g_0}\geq -K g_0,\quad |R_{g_0}|\leq C,$$ 
equation~\eqref{S1: YF1} has short time solution on $\M\times [0,T(g_0)]$ for some $T(g_0)>0$. If in addition $R_{g_0}\leq 0$, then this solution is global. Here $\Ric_{g_0}$ is the Ricci curvature tensor with respect to $g_0$.
The proof is based on the widely used technique consisting of exhausting $\M$ with a sequence of compact manifolds with boundary and studying the solutions to a a sequence of initial boundary value problems. Then uniform estimates of these solutions and their gradients are obtained by means of the maximum principle on manifolds with Ricci curvature bounded from below in the sense given above. 
%The uniform boundedness of the scalar curvature plays an indispensable role in the proof for local well-posedness, although this has not been pointed out explicitly.

The more challenging case is to study the Yamabe flow on incomplete manifolds. E. Bahuaud and B. Vertman started the research in this direction in \cite{BahVer14} by considering the Yamabe flow on a compact manifold with asymptotically simple edge singularities. Their proof for short time existence is based on a careful analysis of the mapping property of the heat operator between H\"older spaces defined with respect to an edge metric. Very recently, the same authors established the long time existence of the Yamabe flow on edge manifolds in \cite{BahVer16}.

Another line of work on the Yamabe flow on geodesically incomplete manifold is to extend the problem onto manifolds with boundary. S. Brendle in \cite{Brend02} commenced the exploration of existence and convergence results on compact manifolds with vanishing mean curvature $H_g$ on the boundary, i.e.
\begin{equation}
\label{S1: YFB1}
\partial_t g=-R_g g \quad \text{on }\M_T,\quad H_g=0 \quad \text{on }\partial\M_T, \quad g(0)=g_0\quad \text{on }\M. 
\end{equation}
Under the conformal change $g=u^{\frac{4}{m-2}}g_0$, $H_g$ is related to $H_{g_0}$ by
$$H_g =\frac{2}{m-2} u^{-\frac{m}{m-2}} \left( \frac{\partial u}{\partial \nu_0} +\frac{m-2}{2} H_{g_0}u \right), $$
where $\nu_0$ is the unit outward normal vector with respect to the metric $g_0$.
Then the boundary condition $H_g\equiv 0$ becomes the homogeneous Neumann condition $\frac{\partial u}{\partial \nu_0}=0. $

I feel it worthwhile mentioning the studies of the well-known Ricci flow 
on incomplete manifolds. 
So far the research in this line mainly focuses on dimension two, in which case the Ricci flow coincides with the Yamabe flow up to a constant multiple. 
A remarkable feature of the Ricci flow for incomplete metrics is the loss of uniqueness. 
One important new notion of solutions is the instantaneously complete solutions, which 	originally appeared in a paper by E.~DiBenedetto and D.J.~Diller\cite{DibDil96}, and was later simplified and generalized  by G. Giesen and P. Topping. To describe it in more detail, we look at the Ricci flow on a possibly incomplete manifold $(\M,g_0)$.
\begin{align}
\label{S1: Ricci}
\partial_t g  = -2\Ric_g \quad \text{on }\M_T,\quad g(0)=g_0 \quad \text{on }\M.
\end{align}
A solution $(\M,g(t))$ is called instantaneously complete solution if for all $t>0$ $(\M,g(t))$ is  complete.
Many aspects of instantaneously complete solutions have been investigated in a series of papers by G. Giesen and P. Topping \cite{GieTop10, GieTop11, GieTop13, Top10, Top15, Zhu13}.

Another line of research on the Ricci flow, which indicates the loss of uniqueness of solutions to \eqref{S1: Ricci} on incomplete manifolds, studies those solutions starting from an incomplete metric and remaining incomplete for all time. In \cite{IseMazSes13, MarRubSes15, Yin10}, the analysis of the Ricci flow on manifolds with conical singularities was carried out. Roughly speaking, a conical metric near an isolated conical point can be expressed in polar coordinates $(t,x)\in (0,1]\times S^1$ by 
$$g_0= dt^2 +t^2dx^2 +\kappa ,$$
where $\kappa$ is a 2-tensor decaying like $t^\tau$ with $\tau>1$. Note that the conical point $r=0$  stays at a finite distance, and thus $(\M,g_0)$ is incomplete. The existence of both instantaneously complete solutions and conical solutions indicates the nonuniqueness of the solutions to the Ricci flow. 
A survey on the Ricci flow for incomplete metrics up to the year 2011 can be found in \cite{IseMazSes11}.

In 2014, the author of this article established an existence and uniqueness result for the Yamabe flow on singular manifolds under some mild initial regularity assumption. 
The result \cite[Theorem~4.4]{Shao14} shows how to construct examples of the Yamabe flow starting at a metric with unbounded scalar curvature.
The work in the current article extends the result in \cite{Shao14} and shows that the Yamabe flow can start with a class of incomplete metrics. This class includes examples different from those that have been studied for the Yamabe flow and the Ricci flow so far; the results are established via a totally different approach from the previous literature.
As a first example, we look at the following theorem.
\begin{theorem}
\label{S1: thm}
Suppose that $(\sM,g_0)$ is a compact manifold with boundary $\partial\sM= \partial_0 \sM \sqcup \partial_1\sM$, and $\partial_i\sM$ are both open. Let $(\M,g_0):=(\sM\setminus \partial_0\sM, g_0|_\M)$. Then the Yamabe flow~\eqref{S1: YFB1} admits a local in time $L_p$ solution.
\end{theorem}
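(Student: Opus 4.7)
The plan is to fit $(\M,g_0)$ into the singular-manifold framework that the earlier sections of the paper set up, treating $\partial_0\sM$ as the singular locus (since its removal is what makes $(\M,g_0)$ incomplete) while keeping $\partial_1\sM$ as a genuine boundary on which the homogeneous Neumann condition $\partial u/\partial\nu_0=0$ coming from $H_g\equiv 0$ is imposed. Since $\sM$ is a compact manifold with boundary, a collar neighborhood of $\partial_0\sM$ in $\sM$ is diffeomorphic to $[0,\varepsilon)\times\partial_0\sM$, and under that identification $g_0|_\M$ restricts to a smooth metric on the open collar $(0,\varepsilon)\times\partial_0\sM$. The key observation is that this collar, viewed from within $\M$, is precisely the kind of degenerate end handled by the Fuchs-type rescaling developed in the paper, so after the appropriate singular chart change $g_0$ becomes uniformly equivalent to a reference singular metric on each chart, and the whole data fits into the weighted $L_p$-Sobolev scale $\mathbb{X}$.

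First, I would assemble the singular atlas: standard normal charts away from $\partial\sM$, half-space charts with a Neumann boundary operator near $\partial_1\sM$, and the singular rescaled charts of Section~4 near $\partial_0\sM$. This gives a realization of the conformal Laplacian $\Delta_{g_0}$ (and of the zeroth-order coefficient $R_{g_0}$) as an unbounded operator on a weighted space, together with a trace theory on $\partial_1\sM$ adapted to that scale.

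Next I would linearize \eqref{S1: YF2} at the initial datum $u_0\equiv\mathbf{1}_\M$. Its principal part is exactly $\Delta_{g_0}$ with Neumann condition on $\partial_1\sM$, so the abstract $\mathscr{R}$-sectoriality and $\mathcal{H}^\infty$-calculus results of Section~4 apply and yield maximal $L_p$-regularity of the linearization on a short time interval $[0,T]$. Because $u_0\equiv 1$ is bounded away from zero, the quasilinear coefficients $u^{-4/(m-2)}$ and $u^{(m-6)/(m-2)}$ are smooth in $u$ on an open neighborhood of $u_0$ in the trace space; the standard contraction / implicit-function-theorem argument in the maximal regularity setting then produces a unique local $L_p$ solution, with positivity of $u$ preserved by continuity on $[0,T]$ after possibly shrinking $T$.

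The main obstacle I expect is bookkeeping: arranging the singular analysis so that the singular end at $\partial_0\sM$ and the ordinary Neumann boundary at $\partial_1\sM$ coexist in one functional-analytic setup, and verifying that the trace operator on $\partial_1\sM$ is compatible with the weighted scale that is forced on us by $\partial_0\sM$. Once the mixed \emph{singular + Neumann} realization is shown to be $\mathscr{R}$-sectorial with bounded $\mathcal{H}^\infty$-calculus, the local existence follows directly from the abstract machinery already developed; the geometric content of the theorem is really the assertion that $\sM\setminus\partial_0\sM$ fits into this formalism at all.
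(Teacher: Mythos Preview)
Your proposal is correct and follows essentially the same route as the paper: recognize $(\M,g_0)$ as a (torn) singular manifold with $\partial_0\sM$ as the singular set and $\partial_1\sM$ carrying the Neumann condition $\cB_1$, invoke the $\mathscr{R}$-sectoriality/maximal $L_p$-regularity results of Section~4 for the linearization $\Delta_{g_0}$ at $u_0\equiv{\bf 1}_\M$, and then close with a quasilinear fixed-point argument (the paper uses Cl\'ement--Li). The one ingredient you gloss over is the verification of condition (YF), namely $R_{g_0}\in BC^{\infty,2-\varepsilon}(\M)$ and $\rho^{\varepsilon-(m+2)/p}\in L_p(\M)$; in the paper this is handled separately (Proposition~\ref{S5.2: YF-prop}), but for compact $\sM$ it is immediate from smoothness of $R_{g_0}$ on $\sM$ and a direct collar computation, so your sketch is not missing anything substantive.
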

The proof for this theorem is given in Theorem~\ref{S5.1: YF-thm}, Proposition~\ref{S5.2: YF-prop} and Example~\ref{S5.2: YF-eg}(ii). The explicit definition of $L_p$ solutions can be found in Section~4.3.
The Riemannian manifold $(\M,g_0)$ defined in Theorem~\ref{S1: thm} is  incomplete. 
At first glance, the Yamabe flow in Theorem~\ref{S1: thm} can be viewed as a flow on manifolds with boundary but with no boundary condition on $\partial_0\sM$, and thus is seemingly ill-posed. But as we will show in Section~5, this flow is a well-posed problem. 
Indeed, considering this problem  on the conformally complete manifold $(\M,g_0/\rho^2)$ for some $\rho\in C^\infty(\M; (0,\infty))$, \eqref{S1: YF1} becomes a singular parabolic equation. The core of the analysis of \eqref{S1: YF1} in Theorem~\ref{S1: thm} is how to offset the singular behavior of the conformal factor $u$ with a proper adaptation of the function spaces. 

This paper is organized as follows. 

In Section~2 and Section~3, some preliminary materials are presented. I introduce the precise definitions and properties of singular manifolds and function spaces used in this paper.  

The theoretic basis of this paper is Section~4. Based on the results from \cite{Shao15} and a modification of T.X. Duong's method in \cite{Duong89}, we can prove that a class of second order singular elliptic operators defined in \eqref{S4: operator A} is $\mathscr{R}$-sectorial. 
By the  maximal $L_p$-regularity theory, cf. \cite{DenHiePru03}, this implies that the class of singular parabolic equations considered in this article admits a unique local solution in an $L_p$-framework. 

In Section~5, the theory developed in Section~4 is applied to the Yamabe flow to establish existence and  uniqueness  of local solutions with incomplete initial metrics. The main theorem of this article, Theorem~\ref{S5.1: YF-thm}, is presented in Section~5.1. In Section~5.2, explicit examples of incomplete manifolds satisfying the conditions in  Theorem~\ref{S5.1: YF-thm} are given. 
%%%%%%%%%%%%%%%%%%%%%%%%%%%%%%%%%%%%%%%%%%%%%%%%%%%%%%%%%%%%%%%%%%

%%%%%%%%%%%%%%%%%%%%%%%%%%%%%%%%%%%%%%%%%%

%%%%%%%%%%%%%%%%%%%%%%%%%%%%%%%%%%%%%%%%%

%%%%%%%%%%%%%%%%%%%%%%%%%%%%%%%%%%%%%
\textbf{Notations:} 
%Given any interval $I$ containing $0$, $\dot{I}:= I \setminus\{0\}$. 
Given any topological set $U$, $\mathring{U}$ denotes the interior of $U$. 
%If $U$ consists of only one point, we set $\mathring{U}:=U$. 
\smallskip\\
%%%%%%%%%%%%%%%%%%%%%%%%%%%%%%%%%%%%%%%%%
For any two Banach spaces $X,Y$, $X\doteq Y$ means that they are equal in the sense of equivalent norms. The notation $\Lis(X,Y)$ stands for the set of all bounded linear isomorphisms from $X$ to $Y$.
\smallskip\\
%For any Banach space $E$, we abbreviate $\F(\R^m,E)$ to $\F(E)$ with $\F$ stands for any function space defined in this article. 
%The precise definitions for these function spaces will be presented in Section 2. 
%\smallskip\\
%%%%%%%%%%%%%%%%%%%%%%%%%%%%%%%%%%%%%%%%%
Given any Banach space $X$ and  manifold $\mathscr{M}$,
let $\| \cdot \|_\infty$, $\| \cdot \|_{s,\infty}$, $\|\cdot \|_p$ and $\|\cdot \|_{s,p}$ denote the usual norm of the $X$-valued Banach spaces $BC(\mathscr{M};X)$($L_\infty(\mathscr{M};X)$), $BC^s(\mathscr{M};X)$, $L_p(\mathscr{M};X)$ and $W^s_p(\mathscr{M};X)$, respectively. If the letter $X$ is omitted in the definition of these space, e.g., $BC(\sM)$, it means the corresponding space is $\C$-valued.
%Likewise, their counterparts defined on $\M$ are expressed by $\| \cdot \|_\F^\M$, where $\| \cdot \|_\F$ stands for either of the norms defined on $\R^m$.
\smallskip\\ 
%%%%%%%%%%%%%%%%%%%%%%%%%%%%%%%%%%%%%%%%

\section{\bf Singular manifolds satisfying property $\sH_\lambda$}

\subsection{\bf Uniformly regular Riemannian manifolds and Singular manifolds}

In \cite{Ama13, AmaAr}, H.~Amann introduced the concepts of {\em uniformly regular Riemannian manifolds} and {\em singular manifolds}

Let $(\M,g)$ be a $C^\infty$-Riemannian manifold of dimension $m$ with or without boundary endowed with $g$ as its Riemannian metric such that its underlying topological space is separable. An atlas $\A:=(\Ok,\vpk)_{\kappa\in \K}$ for $\M$ is said to be normalized if 
\begin{align*}
\vpk(\Ok)=
\begin{cases}
\Q, \hspace*{1em}& \Ok\subset\mathring{\M},\\
\Q\cap\H, &\Ok\cap\partial\M \neq\emptyset,
\end{cases}
\end{align*}
where $\H$ is the closed half space $\bar{\R}^+ \times\R^{m-1}$ and $\Q$ is the unit cube at the origin in $\R^m$. We put $\Qk:=\vpk(\Ok)$ and  $\psk:=\vpk^{-1}$. 

The atlas $\A$ is said to have \emph{finite multiplicity} if there exists $K\in \N $ such that any intersection of more than $K$ coordinate patches is empty. Put
\begin{align*}
\mathfrak{N}(\kappa):=\{\tilde{\kappa}\in\K:\mathsf{O}_{\tilde{\kappa}}\cap\Ok\neq\emptyset \}.
\end{align*} 
The finite multiplicity of $\A$ and the separability of $\M$ imply that $\A$ is countable.

An atlas $\A$ is said to fulfil the \emph{uniformly shrinkable} condition, if it is normalized and there exists $r\in (0,1)$ such that $\{\psk(r{\Qk}):\kappa\in\K\}$ is a cover for ${\M}$. 

Following H.~Amann \cite{Ama13, AmaAr}, we say that $(\M,g)$ is a {\bf{uniformly regular Riemannian manifold}} if it admits an atlas $\A$ such that
\begin{itemize}
\item[(R1)] $\A$ is uniformly shrinkable and has finite multiplicity. If $\M$ is oriented, then $\A$ is orientation preserving.
\item[(R2)] $\|\varphi_{\eta}\circ\psk \|_{k,\infty}\leq c(k) $, $\kappa\in\K$, $\eta\in\mathfrak{N}(\kappa)$, and $k\in{\N}_0$.
\item[(R3)] $\kf g\sim g_m $, $\kappa\in\K$. Here $g_m$ denotes the Euclidean metric on ${\R}^m$ and $\kf g$ denotes the pull-back metric of $g$ by $\psk$.
\item[(R4)] $\|\kf g\|_{k,\infty}\leq c(k)$, $\kappa\in\K$ and $k\in\Nz$.
\end{itemize}
Here $\|u\|_{k,\infty}:=\max_{|\alpha|\leq k}\|\partial^{\alpha}u\|_{\infty}$, and it is understood that a constant $c(k)$, like in (R2), depends only on $k$. An atlas $\A$ satisfying (R1) and (R2) is called a \emph{uniformly regular atlas}. (R3) reads as
\begin{center}
$|\xi|^2/c\leq \kf g(x)(\xi,\xi) \leq{c|\xi|^2}$,\hspace{.5em} for any $x\in \Qk,\xi\in \R^m, \kappa\in\K$ and some $c\geq{1}$.
\end{center}
In \cite{DisShaoSim}, it is shown that the class of {\em uniformly regular Riemannian manifolds} coincides with the family of complete Riemannian manifolds with bounded geometry, when $\partial\M=\emptyset$.

Assume that $\rho\in C^{\infty}(\M,(0,\infty))$. Then $(\rho,\K)$ is a {\em singularity datum} for $\M$ if
\begin{itemize}
\item[(S1)] $(\M,g/\rho^2)$ is a {\em uniformly regular Riemannian manifold}.
\item[(S2)] $\A$ is a uniformly regular atlas.
\item[(S3)] $\|\kf\rho\|_{k,\infty}\leq c(k)\rho_{\kappa}$, $\kappa\in\K$ and $k\in\N_0$, where $\rho_{\kappa}:=\rho(\psk(0))$.
\item[(S4)] $\rho_{\kappa}/c\leq \rho(\p)\leq c\rho_{\kappa}$, $\p\in\Ok$ and $\kappa\in\K$ for some $c\geq 1$ independent of $\kappa$.
\end{itemize}
Two {\em singularity data} $(\rho,\K)$ and $(\tilde{\rho},\tilde{\K})$ are equivalent, if
\begin{itemize}
\item[(E1)] $\rho\sim \tilde{\rho}$.
\item[(E2)] card$\{\tilde{\kappa}\in\tilde{\K}:\mathsf{O}_{\tilde{\kappa}}\cap\Ok\neq\emptyset\}\leq c$, $\kappa\in\K$.
\item[(E3)] $\|\varphi_{\tilde{\kappa}}\circ\psk\|_{k,\infty}\leq{c(k)}$, $\kappa\in\K$, $\tilde{\kappa}\in\tilde{\K}$ and $k\in{\N}_0$
\end{itemize}
We write the equivalence relationship as $(\rho,\K)\sim(\tilde{\rho},\tilde{\K})$. (S1) and (E1) imply that 
\begin{align}
\label{section 1:singular data}
1/c\leq \rho_{\kappa}/\tilde{\rho}_{\tilde{\kappa}}\leq c,\hspace*{.5em} \kappa\in\K,\hspace*{.5em} \tilde{\kappa}\in\tilde{\K}\text{ and }\mathsf{O}_{\tilde{\kappa}}\cap\Ok\neq \emptyset.
\end{align}
{\em A singularity structure}, $\mathfrak{S}(\M)$, for $\M$ is a maximal family of equivalent {\em singularity data}. A {\em singularity function} for $\mathfrak{S}(\M)$ is a function $\rho\in C^{\infty}(\M,(0,\infty))$ such that there exists an atlas $\A$ with $(\rho,\A)\in\mathfrak{S}(\M)$. The set of all {\em singularity functions} for $\mathfrak{S}(\M)$ is the {\em singular type}, $\mathfrak{T}(\M)$, for $\mathfrak{S}(\M)$. By a {\bf{singular manifold}} we mean a Riemannian manifold $\M$ endowed with a singularity structure $\mathfrak{S}(\M)$. Then $\M$ is said to be \emph{singular of type} $\mathfrak{T}(\M)$. If $\rho\in\mathfrak{T}(\M)$, then it is convenient to set $[\![\rho]\!]:=\mathfrak{T}(\M)$ and to say that $(\M,g;\rho)$ is a {\em singular manifold}. A {\em singular manifold} is a {\em uniformly regular Riemannian manifold} iff $\rho\sim {\bf 1}_{\M}$. 

We refer to \cite{Ama13b, Ama14} for examples of {\em uniformly regular Riemannian manifolds} and {\em singular manifolds}.

%%%%%%%%%%%%%%%%%%%%%%%%%%%%%%%%%%%%%

Lastly, for each $k\in\N$, the concept of {\bf{$C^k$-uniformly regular Riemannian manifold}} is defined by modifying (R2), (R4) and (L1), (L2) in an obvious way. Similarly, {\bf{$C^k$-singular manifolds}} are defined by replacing the smoothness of $\rho$ by $\rho\in C^k(\M,(0,\infty))$ and altering (S1)-(S3) accordingly.

\subsection{\bf Singular manifolds satisfying property $\sH_\lambda$}

In this subsection, a class of {\em singular manifolds}, called {\em singular manifolds} satisfying property $\sH_\lambda$, is introduced. This concept has proven itself useful for the theory of second order differential equations on {\em singular manifolds} in \cite{Shao15}.
 
\begin{definition}
A function $h\in C^2(\M;\R)$ is said to belong to the class $\sH_\lambda(\M,g;\rho)$ with parameters $(c,M)$ for some $c\geq 1$ and $M>0$, if
\begin{itemize}
\item[($\sH_\lambda$1)] $M/c \leq \rho |\gd h|_g \leq Mc$;
\item[($\sH_\lambda$2)] $M/c \leq \rho^\lambda \div(\rho^{2-\lambda}\gd h) \leq Mc.$
\end{itemize}
\end{definition}
The precise definition of the divergence operator $\div$ is given in \eqref{S2: divergence}.
Observe that if $h\in \sH_\lambda(\M,g;\rho)$ with parameters $(c,1)$, then $Mh\in \sH_\lambda(\M,g;\rho)$ with parameters $(c,M)$. 

\begin{definition}
A  singular manifold $(\M,g;\rho)$ is said to enjoy property $\sH_\lambda$, if there exists some $h\in \sH_\lambda(\M,g;\rho)$. 
\end{definition}

Examples of {\em singular manifolds} with property $\sH_\lambda$ can be found in \cite[Section~5.2]{Shao15}. 
We will introduce one example, which will be of particular interest in the following sections.

Let $J_0:=(0,1]$. 
We denote by $\mathscr{R}(J_0)$ the set of all $R\in C^\infty(J_0,(0,\infty))$ with $R(1)=1$ such that 
$R(0):=\lim\limits_{t\to 0} R(t)$ exists in $[0,\infty]$. We write $R\in \mathscr{C_U}(J_0)$ if
\begin{align}
\label{S4: unif mild cusp ch}
\begin{cases}
\text{(i)} \quad & R\in \mathscr{R}(J_0);\\
\text{(ii)}  & \int_{J_0} dt/R(t)=\infty;\\
\text{(iii)}  & 	\|\partial^k_t R\|_{\infty} <\infty,\quad k\geq 1;\\
\text{(iv)} & \partial_t R \sim {\bf 1}_{J_0},\quad |\partial^2_t R| <\infty.
\end{cases}
\end{align}
Then we call $R$ a {\em uniformly mild cusp characteristic}. The first three conditions were introduced by H. Amann in \cite{Ama14} to define {\em  cusp characteristics}.

Assume that $(B,g_B)$ is a $d$-dimensional uniformly regular Riemannian submanifold without boundary of $\R^{\bar{d}}$, and $R\in \mathscr{C_U}(J_0)$. The (model) $(R,B)$-cusp $P(R,B)$ on $J_0$, also called $R$-cusp over $B$ on $J_0$, is defined by 
$$P(R,B)=P(R,B;J_0):=\{(t,R(t)y):\, t\in J_0, \, y\in B\}\subset \R^{1+\bar{d}} .$$
It is a $(1+d)$-dimensional submanifold of $\R^{1+\bar{d}}$. 
The map
$$\phi_P=\phi_P(R):P \rightarrow J_0\times B: \quad (t,R(t)y)\mapsto  (t,y) $$
is a diffeomorphism, the {\em canonical stretching diffeomorphism} of $P$. 

Assume that $(\Gamma, g_\Gamma)$ is a compact connected Riemannian manifold without boundary. Then the (model) $\Gamma$-wedge over the $(R,B)$-cusp, $P(R,B)$, is defined by
$$W=W(R,B,\Gamma):=P(R,B)\times\Gamma.$$
If $\Gamma$ is a one-point space, then $W$ is naturally identified with $P$. Thus every cusp is also a wedge.

We call $(\M,g)$ a {\em closed uniformly mild $\Gamma$-wedge over $P(R,B)$},  if there is a diffeomorphism $f: \M \to W(R,B,\Gamma)$ such that $g=f^*( \phi_P^*(dt^2+g_B) +g_\Gamma)$.
\begin{example}
\label{S2.2: Eg-mild cusp-wedge}
Let $\lambda\in [0,1)\cup(1,\infty)$. Assume that $(\M,g)$ is a closed uniformly mild $\Gamma$-wedge over $P(R,B)$. 
Then $(\M,g)$ is a singular manifold  satisfying property $\sH_\lambda$-end.
\end{example}
\begin{proof}
See \cite[Remark~5.11]{Shao15}.
\end{proof}

%%%%%%%%%%%%%%%%%%%%%%%%%%%%%%%%%%%%%%%%
\section{\bf Weighted function spaces on singular manifolds}

In this Section, I follow the work of H. Amann in \cite{Ama13} and \cite{AmaAr} to introduce some concepts and properties of weighted function spaces on {\em singular manifolds}.

%%%%%%%%%%%%%%%%%%%%%%%%%%%%%%%%%%%%%%%%%
\subsection{\bf Tensor bundles}
Suppose $(\M,g;\rho)$ is a {\em singular manifold}.
Given $\sigma,\tau\in\N_0$, 
$$T^{\sigma}_{\tau}{\M}:=T{\M}^{\otimes{\sigma}}\otimes{T^{\ast}{\M}^{\otimes{\tau}}}$$ 
is the $(\sigma,\tau)$-tensor bundle of $\M$, where $T{\M}$ and $T^{\ast}{\M}$ are the (complexified) tangent and the cotangent bundle of ${\M}$, respectively.
We write $\mathcal{T}^{\sigma}_{\tau}{\M}$ for the $C^{\infty}({\M})$-module of all smooth sections of $T^{\sigma}_{\tau}\M$,
and $\Gamma(\M,T^{\sigma}_{\tau}{\M})$ for the set of all sections.

For abbreviation, we set $\J^{\sigma}:=\{1,2,\ldots,m\}^{\sigma}$, and $\J^{\tau}$ is defined alike. Given local coordinates $\varphi=\{x^1,\ldots,x^m\}$, $(i):=(i_1,\ldots,i_{\sigma})\in\J^{\sigma}$ and $(j):=(j_1,\ldots,j_{\tau})\in\J^{\tau}$, we set
\begin{align*}
\frac{\partial}{\partial{x}^{(i)}}:=\frac{\partial}{\partial{x^{i_1}}}\otimes\cdots\otimes\frac{\partial}{\partial{x^{i_{\sigma}}}}, \quad
 dx^{(j)}:=dx^{j_1}\otimes{\cdots}\otimes{dx}^{j_{\tau}}.
\end{align*}
%with $\partial_{i}=\frac{\partial}{\partial{x^i}}$. 
The local representation of 
$a\in \Gamma(\M,T^{\sigma}_{\tau}{\M})$ with respect to these coordinates is given by 
\begin{align}
\label{local}
a=a^{(i)}_{(j)} \frac{\partial}{\partial{x}^{(i)}} \otimes dx^{(j)} 
%\hspace{1em}\text{ with } a^{(i)}_{(j)}\in \Ok^\K.
\end{align}
with coefficients $a^{(i)}_{(j)}$ defined on $\Ok$。
\smallskip\\
%%%%%%%%%%%%%%%%%%%%%%%%%%%%%%%%%%%%%%%%%
We denote by $\nabla=\nabla_g$ the extension of the Levi-Civita connection over $\mathcal{T}^{\sigma}_{\tau}{\M}$.
$\langle{\cdot,\cdot}\rangle:\mathcal{T}^{\sigma}_{\tau}{\M}\times{\mathcal{T}^{\tau}_{\sigma}{\M}}\rightarrow{C^{\infty}({\M})}$ stands for the extension of the fiber-wise defined duality pairing on ${\M}$, cf. \cite[Section 3]{Ama13}.
Set $\nabla_{i}:=\nabla_{\partial_{i}}$ with $\partial_{i}=\frac{\partial}{\partial{x^i}}$. 
For $k\in\Nz$, we define
$$\nabla^k: \mathcal{T}^{\sigma}_{\tau}{\M}\rightarrow{\mathcal{T}^{\sigma}_{\tau+k}{\M}},\quad a\mapsto{\nabla^k{a}}$$
by letting $\nabla^0 a:=a$ and $\nabla^{k+1} a:=\nabla\circ\nabla^k a$.
We can also extend the Riemannian metric $(\cdot|\cdot)_g$ from the tangent bundle to any $(\sigma,\tau)$-tensor bundle $T^{\sigma}_{\tau}{\M}$, which is still written as $(\cdot|\cdot)_g$. Meanwhile, $(\cdot|\cdot)_{g^*}$ stands for the induced contravariant metric. 
In addition,
$$|\cdot|_g:=|\cdot|_{g^\tau_\sigma}:\mathcal{T}^{\sigma}_{\tau}{\M}\rightarrow{C^{\infty}}({\M}),\quad a\mapsto\sqrt{(a|a)_g}$$
is called the (vector bundle) \emph{norm} induced by $g$.
\smallskip\\
%%%%%%%%%%%%%%%%%%%%%%%%%%%%%%%%%%%%%%%%%
We assume that $V$ is a $\C$-valued tensor bundle on $\M$, i.e.,
$$V=V^{\sigma}_{\tau}:=\{T^{\sigma}_{\tau}\M, (\cdot|\cdot)_g\},$$ 
for some $\sigma,\tau\in\N_0$. 
%The notation $\Gamma(\M,V)$ stands for the set of all sections of the $(\sigma,\tau)$-tensor bundle. 

%%%%%%%%%%%%%%%%%%%%%%%%%%%%%%%%%%%%%%%%%
Throughout the rest of this paper, unless stated otherwise, we always assume that 
\smallskip
\begin{mdframed}
\begin{itemize}
\item $(\M,g;\rho)$ is a {\em singular manifold}.
\item $\rho\in \mathfrak{T}(\M)$, $s\geq 0$, $k\in\Nz$, $1<p<\infty$ and $\vartheta\in \R$.
\item $\sigma,\tau\in \Nz$, $V=V^{\sigma}_{\tau}:=\{T^{\sigma}_{\tau}\M, (\cdot|\cdot)_g\}$.
\end{itemize}
\end{mdframed}
%In \cite[Lemma~3.1]{Ama13}, it is shown that $\M$ satisfies the following properties:
%\begin{itemize}
%\item[(P1)] ${\kf}g\sim \rho^2_{\kappa}g_m$ and ${\kf}g^{\ast}\sim \rho^{-2}_{\kappa}g_m$, where $g^{\ast}$ is the induced contravariant metric.
%\item[(P2)] $\rho^{-2}_{\kappa}\|{\kf}g\|_{k,\infty}+\rho^2_{\kappa}\|{\kf}g^{\ast}\|_{k,\infty}\leq c(k)$, $k\in\N_0$ and $\kappa\in\K$.
%\item[(P3)] For $\sigma,\tau\in\N_0$ given, then 
%\begin{center}
%${\kf}(|a|_g)\sim \rho^{\sigma-\tau}_{\kappa}|{\kf}a|_{g_m}$, \hspace{1em}$a\in \mathcal{T}^{\sigma}_{\tau}\M$,
%\end{center}
%and
%\begin{center}
%$|{\kb}b|_g \sim \rho^{\sigma-\tau}_\kappa \kb(|b|_{g_m})$, \hspace{1em}$b\in \mathcal{T}^{\sigma}_{\tau}\Qk$.
%\end{center}
%\end{itemize}

%%%%%%%%%%%%%%%%%%%%%%%%%%%%%%%%%%

\subsection{\bf Weighted function spaces}

We denote by $\mathcal{D}({\M},V)$ the space of smooth sections of $V$ that is compactly supported in $\M$.
Then the weighted Sobolev space $W^{k,\vartheta}_p({\M},V)$ is defined as the completion of $\mathcal{D}({\M},V)$ in $L_{1,loc}(\M,V)$ with respect to the norm
\begin{center}
$\|\cdot\|_{k,p;\vartheta}: u\mapsto(\sum_{i=0}^{k}\|\rho^{\vartheta+i+\tau-\sigma}|\nabla^{i}u|_{g}\|_p^p)^{\frac{1}{p}}$.
\end{center}
Note that $W^{0,\vartheta}_p(\M, V)=L^{\vartheta}_p(\M, V)$ with equal norms. In particular, we can define the weighted spaces $L_q^\vartheta(\M,V)$ for $q\in \{1,\infty\}$ in a similar manner.

%Analogously, the weighted Besov spaces are defined for $k\in\N$ by
%\begin{align}
%\label{S2: Besov space}
%B^{k,\vartheta}_p(\M,V):=(W^{k-1,\vartheta}_p(\M, V),W^{k+1,\vartheta}_p(\M, V))_{1/2,p}. 
%\end{align}
Define
$$BC^{k,\vartheta}(\M,V):=(\{u\in{C^k({\M},V)}:\|u\|_{k,\infty;\vartheta}<\infty\},\|\cdot\|_{k,\infty;\vartheta}),$$
where $\|u\|_{k,\infty;\vartheta}:={\max}_{0\leq{i}\leq{k}}\|\rho^{\vartheta+i+\tau-\sigma}|\nabla^{i}u|_{g}\|_{\infty}$.
We also set
$$BC^{\infty,\vartheta}({\M},V):=\bigcap_{k}BC^{k,\vartheta}(\M, V).$$ 
The weighted Sobolev-Slobodeckii spaces are defined as
\begin{align}
\label{S2: fractional Sobolev}
W^{s,\vartheta}_p({\M},V):=(L^{\vartheta}_p(\M, V),W^{k,\vartheta}_p(\M, V))_{s/k,p},
\end{align}
for $ s\in \R_+\setminus\Nz$, $k=[s]+1$,
where $(\cdot,\cdot)_{\theta,p}$ is the real interpolation method \cite[Chapter I.2.4.1]{Ama95}.

In the special case that $(\M,g)$ is uniformly regular, since $\rho\sim {\bf 1}_\M$, the definition of any weighted space $\F^{s,\vartheta}(\M,V)$ is actually independent of the weight $\vartheta$. In this case, all spaces are indeed unweighted. We thus denote these spaces simply by $\F^s(\M,V)$.

In the rest of this subsection, several properties of weighted function spaces are presented without proof. Their proofs can be found in \cite{Shao15}.

Let $V_j=V^{\sigma_j}_{\tau_j}:=\{T^{\sigma_j}_{\tau_j}\M,(\cdot|\cdot)_g\}$ with $j=1,2,3$ be $\C$-valued tensor bundles on $\M$. By bundle multiplication from $V_1\times V_2$ into $V_3$, denoted by
\begin{center}
${\mathsf{m}}: V_1\times V_2\rightarrow V_3$,\hspace{1em} $(v_1,v_2)\mapsto {\mathsf{m}}(v_1,v_2)$,
\end{center}
we mean a smooth bounded section $\mathfrak{m}$ of $\Hom(V_1\otimes V_2,V_3)$, i.e., 
\begin{align}
\label{section 2: bundle multiplication}
\mathfrak{m}\in BC^{\infty}(\M, \text{Hom}(V_1\otimes V_2,V_3)), 
\end{align}
such that $\mathsf{m}(v_1,v_2):=\mathfrak{m}(v_1\otimes v_2)$. \eqref{section 2: bundle multiplication} implies that  for some $c>0$
\begin{center}
$|{\mathsf{m}}(v_1,v_2)|_g \leq c|v_1|_g |v_2|_g$,\hspace{1em} $v_i\in \Gamma(\M,V_i)$ with $i=1,2$.
\end{center}
Its point-wise extension from $\Gamma(\M,V_1\oplus V_2)$ into $\Gamma(\M,V_3)$ is defined by:
\begin{align*}
\mathsf{m}(v_1,v_2)(\p):=\mathsf{m}(\p)(v_1(\p),v_2(\p))
\end{align*}
for $v_i\in \Gamma(\M,V_i)$ and $p\in\M$. We still denote it by ${\mathsf{m}}$. We can formulate the following point-wise multiplier theorem for function spaces over {\em singular manifolds}.

\begin{prop}
\label{S2: pointwise multiplication}
Let $k\in\Nz$. Assume that the tensor bundles $V_j=V^{\sigma_j}_{\tau_j}:=\{T^{\sigma_j}_{\tau_j}\M,(\cdot|\cdot)_g\}$ with $j=1,2,3$ satisfy 
\begin{align}
\label{section 2: ptm-condition}
\sigma_3-\tau_3=\sigma_1+\sigma_2-\tau_1-\tau_2.
\end{align}
Suppose that $\mathsf{m}:V_1\times V_2\rightarrow V_3$ is a bundle multiplication, and $\vartheta_3=\vartheta_1+\vartheta_2$. Then $[(v_1,v_2)\mapsto \mathsf{m}(v_1,v_2)]$ is a bilinear and continuous map for $k\in\Nz$ and $s\leq k$
$$BC^{k,\vartheta_1}(\M, V_1)\times W^{s,\vartheta_2}_p(\M, V_2)\rightarrow W^{s,\vartheta_3}_p(\M, V_3).$$ 
\end{prop}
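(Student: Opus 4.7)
The plan is to first establish the claim for integer $s = k' \in \Nz$ with $k' \le k$ by direct calculation using the Leibniz rule for the Levi-Civita connection, and then to extend to non-integer $s$ by real interpolation in the second argument.

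For the integer case, iterating the Leibniz rule yields the pointwise estimate
$$|\nabla^{\ell}\mathsf{m}(v_1,v_2)|_g \;\lesssim\; \sum_{i+j+n=\ell}|\nabla^{n}\mathsf{m}|_g\,|\nabla^{i}v_1|_g\,|\nabla^{j}v_2|_g$$
for $\ell\le k'$. The key observation is that $\mathrm{Hom}(V_1\otimes V_2,V_3)$ is itself an $(\sigma,\tau)$-tensor bundle with $\sigma=\sigma_3+\tau_1+\tau_2$ and $\tau=\tau_3+\sigma_1+\sigma_2$, so the hypothesis $\sigma_3-\tau_3=\sigma_1+\sigma_2-\tau_1-\tau_2$ gives $\tau-\sigma=0$. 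Thus $\mathsf{m}\in BC^{\infty}(\M,\mathrm{Hom}(V_1\otimes V_2,V_3))$ translates into $\rho^{n}|\nabla^{n}\mathsf{m}|_g\le c_n$ for all $n$. Combining this with the defining estimate $\rho^{\vartheta_1+i+\tau_1-\sigma_1}|\nabla^{i}v_1|_g\le \|v_1\|_{k,\infty;\vartheta_1}$ and multiplying by the target weight $\rho^{\vartheta_3+\ell+\tau_3-\sigma_3}$, the exponents cancel to
$$\vartheta_3-\vartheta_1-\vartheta_2+(\ell-n-i-j)+(\tau_3-\sigma_3)-(\tau_1-\sigma_1)-(\tau_2-\sigma_2)=0,$$
which is exactly the content of the two balance conditions. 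One obtains
$$\rho^{\vartheta_3+\ell+\tau_3-\sigma_3}|\nabla^{\ell}\mathsf{m}(v_1,v_2)|_g \;\lesssim\; \|v_1\|_{k,\infty;\vartheta_1}\sum_{j\le\ell}\rho^{\vartheta_2+j+\tau_2-\sigma_2}|\nabla^{j}v_2|_g.$$
Taking $L_p$-norms and summing $\ell=0,\dots,k'$ yields $\|\mathsf{m}(v_1,v_2)\|_{k',p;\vartheta_3}\lesssim \|v_1\|_{k,\infty;\vartheta_1}\|v_2\|_{k',p;\vartheta_2}$.

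For non-integer $s$, I would set $k'=[s]+1\le k$ (treating separately the endpoint $s=k$, which is already covered) and invoke the definition \eqref{S2: fractional Sobolev} to identify $W_p^{s,\vartheta_2}(\M,V_2)=(L_p^{\vartheta_2}(\M,V_2),W_p^{k',\vartheta_2}(\M,V_2))_{s/k',p}$ and similarly for the target space. The integer case gives two continuous bilinear maps
$$BC^{k,\vartheta_1}(\M,V_1)\times L_p^{\vartheta_2}(\M,V_2)\to L_p^{\vartheta_3}(\M,V_3),\quad BC^{k,\vartheta_1}(\M,V_1)\times W_p^{k',\vartheta_2}(\M,V_2)\to W_p^{k',\vartheta_3}(\M,V_3),$$
and the standard interpolation property of bilinear maps with respect to the real method (fixing the first factor) transfers continuity to the intermediate spaces, delivering the stated estimate at level $s$.

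The main obstacle is purely the bookkeeping in the weighted Leibniz estimate: one must use \emph{both} the tensorial condition $\sigma_3-\tau_3=\sigma_1+\sigma_2-\tau_1-\tau_2$ and the weight condition $\vartheta_3=\vartheta_1+\vartheta_2$ simultaneously, together with the fact that the former forces the Hom-bundle carrying $\mathsf{m}$ to have type difference zero so that unweighted smoothness of $\mathsf{m}$ produces the correct $\rho^{-n}$ factors on its covariant derivatives. Once this algebraic cancellation is verified, the integer case is immediate and the fractional case is a routine application of real interpolation.
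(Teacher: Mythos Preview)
Your argument is correct and is exactly the standard route: Leibniz rule plus the weight/type bookkeeping for integer orders, followed by real interpolation in the second slot for fractional $s$. The paper itself does not give a proof here but defers to \cite{Shao15}; the argument there follows the same pattern you outline, so your proposal matches the intended proof.
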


\begin{prop}
\label{S2: nabla}
For $\F\in \{BC, W_p\}$, we have
$$\nabla\in \L(\F^{s+1,\vartheta}(\M, V^\sigma_\tau), \F^{s,\vartheta}(\M, V^\sigma_{\tau+1}),$$
and
$$\gd \in \L(\F^{s+1,\vartheta}(\M, V^\sigma_\tau), \F^{s,\vartheta+2}(\M, V^{\sigma+1}_\tau)).$$
\end{prop}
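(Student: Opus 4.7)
The plan is to verify both claims first for integer order $s=k$ by direct comparison of the defining norms, and then to extend to fractional $s \in \R_+ \setminus \Nz$ by real interpolation using the definition \eqref{S2: fractional Sobolev}.

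For the covariant derivative $\nabla$, the key observation is that $\nabla^i(\nabla u) = \nabla^{i+1}u$, so the $i$-th summand in the norm of $\nabla u$ in $\F^{k, \vartheta}(\M, V^\sigma_{\tau+1})$ carries the weight $\rho^{\vartheta + i + (\tau+1) - \sigma}$, which equals $\rho^{\vartheta + (i+1) + \tau - \sigma}$, the weight of the $(i+1)$-th summand in the $\F^{k+1, \vartheta}(\M, V^\sigma_\tau)$-norm of $u$. Summing over $i = 0, \ldots, k$ immediately yields $\|\nabla u\|_{k, p; \vartheta} \leq \|u\|_{k+1, p; \vartheta}$, and the same pointwise computation with $L_\infty$ in place of $L_p$ handles the $BC$ case.

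For the gradient I would exploit the factorization $\gd u = g^\sharp \nabla u$, where $g^\sharp$ raises the covariant index freshly introduced by $\nabla$. Because $\nabla g = 0$, the raising operator commutes with $\nabla$, so inductively $\nabla^i \gd u = g^\sharp \nabla^{i+1}u$. A direct contraction with the metric (the $g_{k k'} g^{k l} g^{k' l'} = g^{l l'}$ identity) shows that $g^\sharp$ acts as a pointwise bundle isometry, $|g^\sharp Y|_g = |Y|_g$ for $Y \in V^\sigma_{\tau+1}$; hence $|\nabla^i \gd u|_g = |\nabla^{i+1} u|_g$. The weight appearing at index $i$ in $\|\gd u\|_{k, p; \vartheta + 2}$ is $\rho^{(\vartheta + 2) + i + \tau - (\sigma+1)} = \rho^{\vartheta + 1 + i + \tau - \sigma}$, precisely matching the weight of the $(i+1)$-th summand in $\|u\|_{k+1, p; \vartheta}$. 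Summing gives the bound for $\gd$ in the integer and $BC$ cases.

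For $s \in \R_+ \setminus \Nz$ I would apply the interpolation functor $(\cdot, \cdot)_{s/k, p}$ to the bounded linear maps between the integer-order spaces obtained above, and use \eqref{S2: fractional Sobolev} to identify the resulting interpolation spaces with $W^{s+1, \vartheta}_p$ on the source side and $W^{s, \vartheta}_p$ (resp.\ $W^{s, \vartheta+2}_p$) on the target side. The one technical nuisance is that the source-side interpolation needs to be identified across an integer threshold; this rests on a reiteration identity for the weighted Sobolev scale, once granted (as in \cite{Shao15}) the operator-interpolation argument is routine, and the $BC$ scale follows even more directly since the pointwise estimates above transfer verbatim to intermediate H\"older-type spaces.
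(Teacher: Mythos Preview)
The paper does not supply its own proof of this proposition; it is one of several results in Section~3.2 stated without proof and deferred to \cite{Shao15}. Your argument is correct and is essentially the natural direct proof one would expect to find there: the weight bookkeeping for $\nabla$ (shifting the covariant index count by one exactly absorbs the extra factor of $\rho$) and for $\gd$ (using that $g^\sharp$ is a parallel pointwise isometry, so $|\nabla^i\gd u|_g=|\nabla^{i+1}u|_g$, with the weight shift $\vartheta\mapsto\vartheta+2$ compensating for the change $\sigma\mapsto\sigma+1$) is exactly right at integer orders, and the passage to fractional $s$ via \eqref{S2: fractional Sobolev} together with reiteration is the standard route.

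One small remark: in this paper the $BC$-scale is only defined for integer orders $k\in\Nz$ (see the standing assumptions at the end of Section~3.1 and the definition of $BC^{k,\vartheta}$), so your closing comment about transferring the estimate to ``intermediate H\"older-type spaces'' is unnecessary for the statement as written; the integer-order computation you give already covers the $BC$ case in full.
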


\begin{prop}
\label{S2: change of wgt}
For $\F\in\{BC, W_p\}$, we have
$$f_{\vartheta}:=[u\mapsto \rho^{\vartheta}u] \in \Lis(\F^{s,\vartheta^\prime+\vartheta}(\M, V),\F^{s,\vartheta^\prime}(\M, V)).$$ 
\end{prop}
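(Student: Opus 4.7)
The plan is to realize $f_\vartheta$ as pointwise multiplication by the scalar function $\rho^\vartheta$ and invoke the pointwise multiplier theorem (Proposition~\ref{S2: pointwise multiplication}). Concretely, take $V_1 = \C$ (so $\sigma_1 = \tau_1 = 0$), $V_2 = V_3 = V^\sigma_\tau$, and the obvious scalar bundle multiplication $\mathsf{m}(f,u) := fu$; the balance condition \eqref{section 2: ptm-condition} then reads $\sigma - \tau = 0 + \sigma - 0 - \tau$, which holds trivially. With the weight choices $\vartheta_1 = -\vartheta$, $\vartheta_2 = \vartheta' + \vartheta$, $\vartheta_3 = \vartheta'$, the multiplier theorem immediately gives boundedness of $f_\vartheta$ from $W^{s,\vartheta'+\vartheta}_p(\M,V)$ into $W^{s,\vartheta'}_p(\M,V)$ for integer $s$, provided that $\rho^\vartheta \in BC^{\infty,-\vartheta}(\M,\C)$. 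The analogous statement for $BC$-spaces follows from the $BC$-version of the multiplier calculation (or equivalently, by taking $L_\infty$ instead of $L_p$ in the same estimates, the proof is identical).

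The heart of the proof is therefore the auxiliary claim: for every $\alpha \in \R$, $\rho^\alpha \in BC^{\infty,-\alpha}(\M,\C)$, i.e., $\|\rho^{-\alpha+i}|\nabla^i \rho^\alpha|_g\|_\infty < \infty$ for all $i \in \Nz$. This reduces to the pointwise bound $|\nabla^i \rho^\alpha|_g \leq c_i\, \rho^{\alpha - i}$. I would prove it by induction on $i$. The case $i=0$ is trivial. The inductive step uses the Leibniz rule, which unfolds $\nabla^i(\rho^\alpha)$ as a finite sum of terms of the form
\begin{equation*}
c_{\alpha,\mathbf{j}}\, \rho^{\alpha - \ell}\, \nabla^{j_1}\rho \otimes \cdots \otimes \nabla^{j_\ell}\rho, \qquad j_1 + \cdots + j_\ell = i, \; j_k \geq 1,
\end{equation*}
combined with the baseline estimate $|\nabla^j \rho|_g \leq c_j\, \rho^{1-j}$ for $j\geq 0$. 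This baseline estimate is itself a direct consequence of the singular manifold axioms: in any chart $(\Ok,\vpk)$, (S1) and (R3) give $\kf g \sim \rho_\kappa^2 g_m$ so that the inverse metric satisfies $\kf g^{ij} \sim \rho_\kappa^{-2}\delta^{ij}$, axiom (S3) bounds coordinate derivatives of $\rho$ by powers of $\rho_\kappa$, and the Christoffel symbols of $g$ inherit the correct weighted bounds from the corresponding ones for the uniformly regular metric $g/\rho^2$; combining these and using (S4) to convert $\rho_\kappa$ to $\rho$ yields the claim. Substituting into the Leibniz expansion gives a sum of terms each bounded by $\rho^{\alpha - j_1 - \cdots - j_\ell} = \rho^{\alpha - i}$, as desired.

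Once $\rho^\vartheta \in BC^{\infty,-\vartheta}(\M,\C)$ is established, Proposition~\ref{S2: pointwise multiplication} yields $f_\vartheta \in \L(\F^{k,\vartheta'+\vartheta}(\M,V), \F^{k,\vartheta'}(\M,V))$ for every $k \in \Nz$, and the Sobolev-Slobodeckii case $s \in \R_+\setminus\Nz$ follows at once from the interpolation definition \eqref{S2: fractional Sobolev} and the interpolation property of bounded linear operators. Applying the same argument with $\vartheta$ replaced by $-\vartheta$ produces $f_{-\vartheta} \in \L(\F^{s,\vartheta'}(\M,V), \F^{s,\vartheta'+\vartheta}(\M,V))$. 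Since $f_\vartheta \circ f_{-\vartheta}$ and $f_{-\vartheta} \circ f_\vartheta$ both act as the identity on the dense subspace $\mathcal{D}(\M,V)$, and each composition is bounded on the respective weighted space, they equal the identity on the completion. Hence $f_\vartheta$ is a topological isomorphism with inverse $f_{-\vartheta}$. The main technical obstacle is the Leibniz computation and the baseline estimate $|\nabla^j\rho|_g \lesssim \rho^{1-j}$; everything else is formal once that bound is in hand.
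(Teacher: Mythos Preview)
Your argument is correct. The paper itself does not give a proof of this proposition; it merely states that ``their proofs can be found in \cite{Shao15}.'' Your approach---reducing to the pointwise multiplier theorem (Proposition~\ref{S2: pointwise multiplication}) via the auxiliary fact $\rho^\alpha\in BC^{\infty,-\alpha}(\M)$, and then exhibiting $f_{-\vartheta}$ as the inverse---is the natural direct route and is sound. Two minor remarks: first, Proposition~\ref{S2: pointwise multiplication} as stated in the paper already covers non-integer $s\leq k$, so your separate interpolation step for the Sobolev--Slobodeckii scale is redundant (though harmless); second, the paper only records the $BC\times W_p\to W_p$ version of the multiplier theorem, so your appeal to a $BC\times BC\to BC$ analogue is, strictly speaking, outside what is written---but as you note, the argument for that case is identical and well known. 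The derivation of the baseline estimate $|\nabla^j\rho|_g\lesssim\rho^{1-j}$ from axioms (S1)--(S4) is correctly sketched.
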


%\begin{prop}
%\label{S2: sharp-flat}
%For $\F\in\{BC,W_p, \mathring{W}_p\}$, we have
%$$[a \mapsto a^\sharp]\in \L(\F^{s,\vartheta}(\M, V^\sigma_{\tau+1}),\F^{s,\vartheta+2}(\M, V^{\sigma+1}_\tau)) .$$
%$$[a \mapsto a_\flat]\in \L(\F^{s,\vartheta}(\M, V^{\sigma+1}_\tau),\F^{s,\vartheta-2}(\M, V^\sigma_{\tau+1})) .$$
%\end{prop}

We denote by $\ev^{\sigma+1}_{\tau+1}: V^{\sigma+1}_{\tau+1}\rightarrow V^\sigma_\tau$ the contraction with respect to position $\sigma+1$ and $\tau+1$, that is for any $(i)\in\J^\sigma$, $(j)\in\J^\tau$ and $k,l\in\J^1$ and $\p\in\M$
$$
\ev^{\sigma+1}_{\tau+1}a  :=\ev^{\sigma+1}_{\tau+1} a^{(i;k)}_{(j;l)} \frac{\partial}{\partial x^{(i)}}\otimes \frac{\partial}{\partial x^k}\otimes dx^{(j)}\otimes dx^l:=a^{(i;k)}_{(j;k)} \frac{\partial}{\partial x^{(i)}} \otimes dx^{(j)} 
$$
in every local chart. Recall that the surface divergence of tensor fields with respect to the metric $g$ is the map
\begin{equation}
\label{S2: divergence}
\div=\div_g: C^1(\M,V^{\sigma+1}_\tau)\rightarrow C(\M, V^\sigma_\tau), \quad a\mapsto \ev^{\sigma+1}_{\tau+1}(\nabla a).
\end{equation}
%%%%%%%%%%%%%%%%%%%%%%%%%%%%%%%%%%%%%%%%%%%%%%%%%%%%%%5

Put $V^\prime:=V^\tau_\sigma$. Let $\ev: V^{\sigma+\sigma_1}_{\tau+\tau_1} \times V^\prime \rightarrow V^{\sigma_1}_{\tau_1}$ denote the complete contraction. For any $a\in V^{\sigma+\sigma_1}_{\tau+\tau_1}$ and $b\in V^\prime$, the complete contraction (on the right)  is defined by
\begin{align*}
\ev(a,b)= a^{(i;i_1)}_{(j;j_1)} b^{(j)}_{(i)} \frac{\partial}{\partial x^{(i_1)}}\otimes dx^{(j_1)},
\end{align*}
with $(i)\in\mathbb{J}^\sigma$ ,$(i_1)\in\mathbb{J}^{\sigma_1}$, $(j) \in\mathbb{J}^\tau$,$(j_1)\in\mathbb{J}^{\tau_1}$, in local coordinates. 
The complete contraction (on the left) is defined in an analogous manner. Note that the complete contraction is a bundle multiplication.

Suppose that $\partial\M\neq \emptyset$. Since $T(\partial\M)$ is a subbundle of codimension $1$ of the vector bundle $(T\M)_{\partial\M}$ over $\partial\M$, there exists a unique vector field $\boldsymbol{n}$ in $(T\M)_{\partial\M}$ of length $1$ orthogonal to $T(\partial\M)$, and inward pointing. In every local coordinate system, $\varphi_\kappa=\{x_1,\cdots,x_m\}$
$$\boldsymbol{n}=\frac{1}{\sqrt{g_{11}|\partial\Ok}} \frac{\partial}{\partial x^1}. $$
We consider the boundary operator
$$
\cB_i u =
\begin{cases}
\gamma u \quad &\text{on }\partial \M \quad \text{when $i$=0},\\
\ev(\nabla u,\boldsymbol{n}) &\text{on }\partial \M \quad \text{when $i$=1}.
\end{cases}
$$
Here $\gamma$ is the trace operator on $\partial\M$. We define for $s\neq i+1/p$ the space
\begin{equation}
\label{S3: Sobolev space-boundary}
W^{s,\vartheta}_{p,\cB_i}(\M,V):=\{u\in W^{s,\vartheta}_p(\M,V): \cB_i u=0\}.
\end{equation}
Note that this is a closed subspace of $W^{s,\vartheta}_p(\M, V)$.
In particular,
$$W^{s,\vartheta}_{p,\cB_i}(\M,V)=W^{s,\vartheta}_p(\M, V),\quad 0\leq s <i+1/p.$$ 
See \cite[Section~21]{AmaAr}. 
%Since $W^{s,\vartheta}_p(\M,V)=W^{s,\vartheta}_{p,\cB_i}(\M,V)$ when $\partial\M=\emptyset$, in the sequel, we always focus on the space $W^{s,\vartheta}_{p,\cB_i}(\M,V)$.

%Suppose that $X$ is a closed subspace of $L_2^{\vartheta^\prime}(M,V)$.
%Let $\langle\cdot | \cdot\rangle_{2,\vartheta^\prime}$ be the inner product in $L_2^{\vartheta^\prime}(\M, V)$, that is,
%$$
%\langle u | v\rangle_{2,\vartheta^\prime}:=\int\limits_\M \rho^{2\vartheta^\prime+2\tau-2\sigma}(  u | v )_g\, dV_g .
%$$ 
%We denote the dual space of $X$ with respect to $\langle\cdot | \cdot\rangle_{2,\vartheta^\prime}$ by $X^\prime_{\vartheta^\prime}$. 
%Then we define 
%$$
%(W^{s,\vartheta}_{p,\cB_0}(\M,V))^\prime_{\vartheta^\prime}=: W^{-s,2\vartheta^\prime-\vartheta}_{p^\prime}(\M,V).
%$$

\begin{prop}
\label{S2: div-tangent}
For  $\F\in \{BC, W_p\}$, it holds
$$\div\in\L(\F^{s+1,\vartheta}(\M, V^{\sigma+1}_\tau), \F^{s,\vartheta}(\M, V^\sigma_\tau)).$$
\end{prop}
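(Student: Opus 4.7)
The approach is to factor the divergence as $\div = \ev^{\sigma+1}_{\tau+1}\circ \nabla$ in accordance with \eqref{S2: divergence}, and then combine Proposition~\ref{S2: nabla} with a continuity property of the contraction.

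First, applied to the bundle $V^{\sigma+1}_\tau$, Proposition~\ref{S2: nabla} gives
$$\nabla \in \L\bigl(\F^{s+1,\vartheta}(\M,V^{\sigma+1}_\tau),\ \F^{s,\vartheta}(\M,V^{\sigma+1}_{\tau+1})\bigr),$$
so for $a\in \F^{s+1,\vartheta}(\M,V^{\sigma+1}_\tau)$ the covariant derivative $\nabla a$ lies continuously in the correct regularity class, albeit in the $(\sigma+1,\tau+1)$-bundle. The remaining step is to show that the contraction acts continuously as
$$\ev^{\sigma+1}_{\tau+1}:\F^{s,\vartheta}(\M,V^{\sigma+1}_{\tau+1})\to \F^{s,\vartheta}(\M,V^\sigma_\tau).$$
This rests on two ingredients. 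Since the Levi--Civita connection is metric and torsion-free, it commutes with trace contractions, so iterating gives
$$\nabla^i(\div a) = \ev^{\sigma+1}_{\tau+1}(\nabla^{i+1}a),$$
and the elementary pointwise bound $|\ev^{\sigma+1}_{\tau+1}(b)|_g\leq c|b|_g$ then yields $|\nabla^i(\div a)|_g\leq c|\nabla^{i+1} a|_g$. The key weight bookkeeping is the exact alignment of exponents: in $\F^{s,\vartheta}(\M,V^\sigma_\tau)$ the weight attached to $|\nabla^i(\div a)|_g$ is $\rho^{\vartheta+i+\tau-\sigma}$, while in $\F^{s+1,\vartheta}(\M,V^{\sigma+1}_\tau)$ the weight attached to $|\nabla^{i+1}a|_g$ is $\rho^{\vartheta+(i+1)+\tau-(\sigma+1)}=\rho^{\vartheta+i+\tau-\sigma}$. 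The two match, so no change-of-weight via Proposition~\ref{S2: change of wgt} is required at level $i$.

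For integer $s=k$, summing (when $\F=W_p$) or maximizing (when $\F=BC$) over $0\leq i\leq k$ yields the estimate $\|\div a\|_{\F^{k,\vartheta}}\leq c\|a\|_{\F^{k+1,\vartheta}}$, which is the claim. For non-integer $s$ in the $W_p$-scale, I would extend via the real interpolation definition~\eqref{S2: fractional Sobolev}, since $\div$ is bounded at the integer endpoints $[s]$ and $[s]+1$ and real interpolation respects bounded linear maps. I expect the only mildly subtle point to be the rigorous verification, at iterated levels, of the commutativity of $\nabla$ with the partial trace $\ev^{\sigma+1}_{\tau+1}$; but this is classical for the Levi--Civita connection and requires no further input. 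Everything else is bookkeeping.
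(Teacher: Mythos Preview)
Your argument is correct in substance. The paper itself does not prove this proposition; it is one of the results explicitly deferred to \cite{Shao15}, so there is no in-paper proof to compare against. Your strategy of writing $\div=\ev^{\sigma+1}_{\tau+1}\circ\nabla$, invoking Proposition~\ref{S2: nabla} for $\nabla$, and then controlling the contraction via $\nabla^i\circ\ev^{\sigma+1}_{\tau+1}=\ev^{\sigma+1}_{\tau+1}\circ\nabla^i$ together with the pointwise bound $|\ev^{\sigma+1}_{\tau+1}b|_g\leq \sqrt{m}\,|b|_g$ is exactly the right mechanism, and your weight bookkeeping is correct.

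One small point of presentation: your interpolation step as written (``$\div$ is bounded at the integer endpoints $[s]$ and $[s]+1$'') does not quite close, because interpolating the couple $(W^{[s]+1,\vartheta}_p,W^{[s]+2,\vartheta}_p)$ to obtain $W^{s+1,\vartheta}_p$ requires a reiteration identity that is not immediate from the definition~\eqref{S2: fractional Sobolev}. The cleaner route is the one you already set up but did not fully exploit: Proposition~\ref{S2: nabla} is stated for \emph{all} $s\geq 0$, so you only need to interpolate the zeroth-order map $\ev^{\sigma+1}_{\tau+1}$. Since $\ev^{\sigma+1}_{\tau+1}$ is bounded $L^\vartheta_p(\M,V^{\sigma+1}_{\tau+1})\to L^\vartheta_p(\M,V^{\sigma}_{\tau})$ and $W^{k,\vartheta}_p(\M,V^{\sigma+1}_{\tau+1})\to W^{k,\vartheta}_p(\M,V^{\sigma}_{\tau})$ with the \emph{same} regularity index on both sides, the definition~\eqref{S2: fractional Sobolev} applies directly and gives boundedness on $W^{s,\vartheta}_p$ without any reiteration. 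Composition with $\nabla$ then finishes the non-integer case.
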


The following results are not contained in \cite{Shao15}.
\begin{theorem}
\label{S2: divergence thm}
For any $a\in W^{1,-\vartheta}_2(\M,V^\prime)$  and $b\in W^{1,\vartheta}_2(\M,V^{\sigma+1}_{\tau})$
$$\int\limits_{\partial\M} \langle\ev(b_\flat,\boldsymbol{n}), a \rangle \, d\mu_g  - \int\limits_\M \langle \div b, a \rangle\, d\mu_g= \int\limits_\M \langle b, \nabla a \rangle\, d\mu_g.$$
%as long as $\langle a_{\partial\M}\otimes (\boldsymbol{n})_\flat, b_{\partial\M}  \rangle =0$.
\end{theorem}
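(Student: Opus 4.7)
The plan is to follow the classical recipe for integration by parts -- verify the identity for smooth compactly supported sections via the divergence theorem, then extend by density -- with the weighted multiplier theorems already recorded taking care of the singular weight $\rho$. In Step~1, fix $a\in \mathcal{D}(\M,V^\prime)$ and $b\in \mathcal{D}(\M,V^{\sigma+1}_{\tau})$ and form the smooth compactly supported vector field $X:=\ev(b,a)\in\mathcal{D}(\M,T\M)$ using the complete contraction described in the paragraph preceding Theorem~\ref{S2: divergence thm}. The metric Leibniz rule for $\nabla=\nabla_g$ yields the pointwise identity
$$\div X = \langle \div b, a\rangle + \langle b, \nabla a\rangle .$$
Since the support of $X$ is compact in $\M$ (it may meet $\partial\M$ but stays uniformly away from the singular end), the classical divergence theorem on the smooth manifold with boundary integrates this to the desired equality, with the boundary contribution identified as $\int_{\partial\M}\langle \ev(b_\flat,\boldsymbol{n}),a\rangle\, d\mu_g$ under the sign convention fixed for the inward normal $\boldsymbol{n}$.

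For the extension, pick approximations $a_n, b_n \in \mathcal{D}$ with $a_n\to a$ in $W^{1,-\vartheta}_2(\M,V^\prime)$ and $b_n\to b$ in $W^{1,\vartheta}_2(\M,V^{\sigma+1}_{\tau})$; such sequences exist by the very definition of these spaces as the completion of $\mathcal{D}$. It then suffices to prove that each of the three bilinear forms in the identity is continuous in $(b,a)$. The two interior terms are handled in one stroke by Proposition~\ref{S2: pointwise multiplication}: the complete contraction is a bundle multiplication into the scalar bundle (so condition~\eqref{section 2: ptm-condition} holds trivially), and the weights $\vartheta_1=\vartheta$ and $\vartheta_2=-\vartheta$ add to $\vartheta_3=0$, which yields
$$\Bigl|\int_\M\langle b,\nabla a\rangle\, d\mu_g\Bigr|+\Bigl|\int_\M\langle \div b,a\rangle\, d\mu_g\Bigr|\le C\,\|b\|_{1,2;\vartheta}\,\|a\|_{1,2;-\vartheta},$$
where $\div b \in L^\vartheta_2(\M,V^\sigma_\tau)$ by Proposition~\ref{S2: div-tangent} and $\nabla a\in L^{-\vartheta}_2(\M,V^\tau_{\sigma+1})$ directly from the weighted Sobolev norm.

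The main obstacle is the continuity of the boundary pairing $(b,a)\mapsto \int_{\partial\M}\langle\ev(b_\flat,\boldsymbol{n}),a\rangle\,d\mu_g$ on $W^{1,\vartheta}_2\times W^{1,-\vartheta}_2$, which requires a trace estimate adapted to the singular weighted setting. I would prove this chart-by-chart: in each patch $(\Ok,\vpk)$ meeting $\partial\M$ pull back to $\Q\cap\H$, invoke the classical half-space trace theorem to bound the trace in a Slobodeckii half-order space on $\{x^1=0\}\cap\Q$, then reassemble using the finite multiplicity of the uniformly regular atlas $\A$ and the uniform two-sided bound $\rho\sim\rho_\kappa$ on $\Ok$ from (S4). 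Because the weights $\vartheta$ attached to $b_\flat$ and $-\vartheta$ attached to $a$ cancel pointwise, the resulting boundary integrand is uniformly in $L_1(\partial\M)$ along the approximation, which justifies passing to the limit in the equation established in Step~1 and concludes the proof.
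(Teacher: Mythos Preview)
Your approach is correct and is essentially what the paper does: it invokes the classical divergence theorem for compactly supported smooth data and then the density argument from \cite[Theorem~2.6]{Shao15}, which is exactly the continuity-of-bilinear-forms extension you spell out. One cosmetic point: Proposition~\ref{S2: pointwise multiplication} as stated is a $BC^{k,\vartheta_1}\times W^{s,\vartheta_2}_p$ result, so it does not literally cover the two interior terms; however, the displayed estimate you wrote is simply Cauchy--Schwarz after inserting $\rho^{\vartheta+\tau-\sigma}\cdot\rho^{-\vartheta-\tau+\sigma}=1$, so the argument stands and matches the paper's route.
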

\begin{proof}
This follows from the divergence theorem and the proof for \cite[Theorem~2.6]{Shao15}.
\end{proof}

\begin{prop}
\label{S2: interpolation}
Suppose that $k_i\in \Nz$, $\vartheta_i\in \R$ with $i=0,1$, $0<\theta<1$ and $k_0<k_1$ . Then
$$(W^{k_0,\vartheta_0}_{p,\cB_i}(\M,V), W^{k_1,\vartheta_1}_{p,\cB_i}(\M,V))_{\theta,p} \doteq W^{k_\theta,\vartheta_\theta}_{p,\cB_i}(\M,V).$$
Here $\xi_\theta:=(1-\theta)\xi_0+\theta \xi_1$ for any $\xi_0,\xi_1\in \R$. When $\partial\M=\emptyset$, $k_\theta\notin \N$; while $\partial\M\neq \emptyset$, $k_\theta\neq i+1/p$.
\end{prop}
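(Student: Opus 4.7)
The plan is to use a localization argument (retraction--coretraction based on the uniformly regular atlas underlying the singularity structure) to reduce the assertion to the classical interpolation identities on Euclidean cubes and half-cubes, which handle both the Sobolev scale and the boundary conditions $\cB_i$.

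Fix a uniformly regular atlas $\A=(\Ok,\vpk)_{\kappa\in\K}$ in the singularity datum, a partition of unity $\{\pi_\kappa\}$ subordinate to $\A$, and cutoffs $\chi_\kappa\in\mathcal{D}(\Ok)$ equal to $1$ on $\supp\pi_\kappa$; in boundary charts both are chosen independent of the normal variable near $\partial\M$, so that multiplication by them commutes with $\cB_i$. The weight-independent localization
$$
Ru:=(\kf(\pi_\kappa u))_{\kappa\in\K},\qquad R^c(u_\kappa)_\kappa:=\sum_{\kappa\in\K}\chi_\kappa\,\kb u_\kappa,
$$
is a retraction--coretraction pair (note $R^c R=\id$ by $\chi_\kappa\pi_\kappa=\pi_\kappa$ and $\sum_\kappa\pi_\kappa\equiv 1$) that is uniformly bounded as
$$
R:W^{k,\vartheta}_{p,\cB_i}(\M,V)\to \ell_p^{\vartheta}\bigl(\K,W^{k}_{p,\cB_i}(\Qk,\kf V)\bigr),
$$
and analogously for $R^c$, for every $\vartheta\in\R$ and $k\in\Nz$; here $\ell_p^\vartheta(\K,X):=\{(u_\kappa):\sum_\kappa\rho_\kappa^{p\vartheta}\|u_\kappa\|_X^p<\infty\}$. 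The uniformity in $\kappa$ follows from the atlas axioms (R1)--(R4), the quasi-constancy $\rho\sim\rho_\kappa$ on $\Ok$ from (S4), the derivative estimates (S3) on $\rho$, and Proposition~\ref{S2: pointwise multiplication} applied to the Leibniz terms involving $\nabla^j\pi_\kappa$ and $\nabla^j\chi_\kappa$. Crucially, $R$ and $R^c$ do not depend on the weight $\vartheta$, so the same pair serves both endpoints of the interpolation simultaneously.

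On each model domain $\Qk\in\{\Q,\Q\cap\H\}$ the classical real-interpolation identity
$$
\bigl(W^{k_0}_{p,\cB_i}(\Qk,\kf V),W^{k_1}_{p,\cB_i}(\Qk,\kf V)\bigr)_{\theta,p}\doteq W^{k_\theta}_{p,\cB_i}(\Qk,\kf V)
$$
holds with uniformly bounded equivalence constants, precisely under the exceptional condition on $k_\theta$ stated in the proposition; cf.\ \cite[Chapter~VII]{Ama95}. Combined with the weighted $\ell_p$-sum interpolation identity
$$
\bigl(\ell_p^{\vartheta_0}(\K,X_0),\ell_p^{\vartheta_1}(\K,X_1)\bigr)_{\theta,p}\doteq \ell_p^{\vartheta_\theta}\bigl(\K,(X_0,X_1)_{\theta,p}\bigr),
$$
the retraction--coretraction principle for real interpolation then delivers the identity asserted in the proposition. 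The main technical obstacle is the uniform-in-$\kappa$ boundedness of $R, R^c$ together with the correct identification of the $\ell_p^\vartheta$-weight exponent: these depend delicately on the scaling $\kf g\sim \rho_\kappa^2 g_m$ implied by (S1), on the derivative control of $\rho$ from (S3), and on the boundary-condition compatibility of the cutoffs, namely $\cB_i(\chi_\kappa u)=\chi_\kappa\cB_i u$; once these uniform estimates are in place, the proposition follows by pure interpolation theory.
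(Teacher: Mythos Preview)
Your approach is correct and is essentially the same as the one underlying the paper's proof: the paper simply defers to \cite[Theorem~8.2(i), formula~(8.3), Theorem~21.3]{AmaAr} and the proof of \cite[formula~(21.2)]{AmaAr}, and those results are established precisely by the retraction--coretraction localization to model cubes $\Q$ and $\Q\cap\H$ that you outline, followed by classical interpolation on the model domains and on weighted $\ell_p$-sums. You have reconstructed, at the level of a sketch, what the cited reference does in detail.

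One minor remark: the exact exponent in your $\ell_p^\vartheta$-target space is not quite $\vartheta$ as written (the scaling $\kf g\sim\rho_\kappa^2 g_m$ contributes additional powers of $\rho_\kappa$ through the volume element and the tensor norm), but since any such constant offset is the same at both interpolation endpoints, the argument is unaffected; you correctly flag this as the technical point requiring care.
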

\begin{proof}
This follows from \cite[Theorem~8.2(i), formula~(8.3), Theorem~21.3]{AmaAr} and the proof of \cite[formula~(21.2)]{AmaAr}
\end{proof}

\begin{prop}
\label{S2: Sobolev embedding}
Suppose that $\vartheta\in\R$ and $s>k+\frac{m}{p}$. Then
$$W^{s,\vartheta}_p(\M) \hookrightarrow BC^{k,\vartheta+\frac{m}{p}}(\M). $$
\end{prop}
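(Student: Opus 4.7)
The plan is to localize via the normalized atlas $(\Ok,\vpk)_{\kappa\in\K}$ and reduce to the classical Sobolev embedding on the reference domain $\Qk$, tracking how the singularity function $\rho$ rescales distances, volumes, and covariant derivatives. By (S1)--(S4) combined with (R3)--(R4), on each chart one has the pointwise comparisons $\psk^* g \sim \rho_\kappa^2\, g_m$ and $\psk^*\rho\sim \rho_\kappa$; consequently the pulled-back volume satisfies $\psk^* d\mu_g\sim \rho_\kappa^m\, dx$, and $|\nabla^i u|_g\circ\psk$ is comparable, with constants uniform in $\kappa$, to $\rho_\kappa^{-i}$ times the sum of Euclidean partial derivatives of $\psk^* u$ of order at most $i$.

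Inserting these comparisons into the defining norms and using the finite multiplicity of $\A$ (so that local $\ell_p$-sums and $\ell_\infty$-sups control the global $L_p$- and $L_\infty$-norms and vice versa), I obtain the equivalences
\begin{align*}
\|u\|_{s,p;\vartheta}^p &\sim \sum_{\kappa\in\K}\rho_\kappa^{p\vartheta+m}\,\|\psk^* u\|_{W^s_p(\Qk)}^p,\\
\|u\|_{k,\infty;\vartheta+m/p} &\sim \sup_{\kappa\in\K}\rho_\kappa^{\vartheta+m/p}\,\|\psk^* u\|_{BC^k(\Qk)},
\end{align*}
which are already part of Amann's framework \cite{Ama13, AmaAr}. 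The $m/p$ shift of the weight comes out naturally, as the $L_p$-scaling on a chart picks up an extra factor of $\rho_\kappa^{m/p}$ (from $\psk^*d\mu_g\sim\rho_\kappa^m\,dx$) relative to the $L_\infty$-scaling. Applying the classical Sobolev embedding $W^s_p(\Qk)\hookrightarrow BC^k(\Qk)$ on the fixed reference cube (or half-cube) $\Qk$, valid for $s>k+m/p$, yields a constant $C$ independent of $\kappa$ with $\|\psk^* u\|_{BC^k(\Qk)}\leq C\,\|\psk^* u\|_{W^s_p(\Qk)}$; coupling this with the trivial inequality $\sup_\kappa b_\kappa\leq \bigl(\sum_\kappa b_\kappa^p\bigr)^{1/p}$ applied to $b_\kappa:=\rho_\kappa^{\vartheta+m/p}\|\psk^* u\|_{W^s_p(\Qk)}$ delivers the claimed embedding.

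The main obstacle is establishing the first of the two local-to-global equivalences for \emph{noninteger} $s$. For $s\in\Nz$ it is a direct bookkeeping from (R3)--(R4) and (S3)--(S4). For $s\in\R_+\setminus\Nz$ one invokes a retraction--coretraction pair between $W^{s,\vartheta}_p(\M)$ and the weighted $\ell_p$-sum $\ell_p\bigl((\rho_\kappa^{p\vartheta+m})_{\kappa};\, W^s_p(\Qk)\bigr)$, a tool already available in Amann's setup \cite{AmaAr}; alternatively, one invokes the interpolation definition \eqref{S2: fractional Sobolev} and observes that both sides of the desired equivalence interpolate correctly between integer indices, since the $\kappa$-weight $\rho_\kappa^{p\vartheta+m}$ is independent of $s$. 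Either route reduces the fractional case to the integer case and closes the argument.
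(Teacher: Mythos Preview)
Your argument is correct and is precisely the mechanism behind the cited result: the paper's own proof is just the one-line reference ``See \cite[Theorem~14.2(ii)]{Ama13}'', and what you have written is an accurate unpacking of that theorem---localization via the uniformly regular atlas, the norm equivalences with the $\rho_\kappa^{p\vartheta+m}$-weighted $\ell_p$-sum of local $W^s_p(\Qk)$-norms (Amann's retraction--coretraction machinery), the classical Sobolev embedding on the fixed reference cube, and the $\ell_\infty\leq\ell_p$ step that produces the $m/p$ weight shift. There is nothing to add or correct.
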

\begin{proof}
See \cite[Theorem~14.2(ii)]{Ama13}.
\end{proof}

\begin{prop}
\label{S2: embedding}
Suppose that $\vartheta_1<\vartheta_0$ and $\F\in \{BC, W_p,  W_{p,\cB_i}\}$. Then
\begin{align*}
\begin{cases}
\F^{s,\vartheta_1}(\M) \hookrightarrow \F^{s,\vartheta_0}(\M)\quad &\text{if }\rho\leq 1;\\
\F^{s,\vartheta_0}(\M) \hookrightarrow \F^{s,\vartheta_1}(\M) &\text{if } \rho\geq 1. 
\end{cases}
\end{align*}
\end{prop}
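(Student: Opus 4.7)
The proof reduces to a simple pointwise inequality on powers of $\rho$, combined with interpolation. By the symmetry $\vartheta_0 \leftrightarrow \vartheta_1$, $\rho \leftrightarrow 1/\rho$ (using that $\rho > 0$), it suffices to treat the case $\rho \leq 1$, $\vartheta_1 < \vartheta_0$. The core observation is that if $\rho \leq 1$ pointwise and $\delta := \vartheta_0 - \vartheta_1 > 0$, then $\rho^\delta \leq 1$ pointwise on $\M$.

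The plan is to first handle the integer-order case. For $k \in \Nz$, expand the norm
$$\|u\|_{k,p;\vartheta_0}^p = \sum_{i=0}^k \bigl\|\rho^{\vartheta_0 - \vartheta_1} \cdot \rho^{\vartheta_1 + i + \tau - \sigma} |\nabla^i u|_g\bigr\|_p^p \leq \|\rho^\delta\|_\infty^p \cdot \|u\|_{k,p;\vartheta_1}^p \leq \|u\|_{k,p;\vartheta_1}^p,$$
which yields $W^{k,\vartheta_1}_p(\M,V) \hookrightarrow W^{k,\vartheta_0}_p(\M,V)$. Replacing $\|\cdot\|_p$ by $\|\cdot\|_\infty$ in the display gives the analogous embedding for $BC^{k,\vartheta}$, covering the $\F = BC$ assertion (where $s$ is integer, matching the paper's convention).

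Next, I extend to fractional $s \in \R_+ \setminus \Nz$ for the Sobolev case using the real interpolation definition \eqref{S2: fractional Sobolev}. Taking $k = [s]+1$, the endpoint embeddings $L_p^{\vartheta_1}(\M,V) \hookrightarrow L_p^{\vartheta_0}(\M,V)$ and $W^{k,\vartheta_1}_p(\M,V) \hookrightarrow W^{k,\vartheta_0}_p(\M,V)$ just established, together with functoriality of $(\cdot,\cdot)_{s/k,p}$ (see \cite[Chapter I.2.4.1]{Ama95}), produce the embedding for all $s \geq 0$.

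For the subspace $\F = W_{p,\cB_i}$, I note that $W^{s,\vartheta}_{p,\cB_i}$ is a closed subspace of $W^{s,\vartheta}_p$ cut out by the boundary condition $\cB_i u = 0$, independently of $\vartheta$. Hence the embedding for the ambient spaces restricts to the embedding
$$W^{s,\vartheta_1}_{p,\cB_i}(\M,V) \hookrightarrow W^{s,\vartheta_0}_{p,\cB_i}(\M,V),$$
noting that for $s < i + 1/p$ both sides simply coincide with the full Sobolev spaces and there is nothing extra to check. The case $\rho \geq 1$ is obtained by swapping the roles of $\vartheta_0$ and $\vartheta_1$ in the bound $\rho^\delta \leq 1$, giving instead $\rho^{\vartheta_1 - \vartheta_0} \leq 1$ and reversing the embedding direction. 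No genuine obstacle arises; the proof is a direct estimate plus routine functoriality, which is presumably why the author only sketches it in the statement.
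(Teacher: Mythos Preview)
Your proof is correct. The pointwise bound $\rho^{\vartheta_0-\vartheta_1}\leq 1$ (resp.\ $\rho^{\vartheta_1-\vartheta_0}\leq 1$) immediately controls each term in the integer-order norms, interpolation pushes this to fractional $s$ for $W_p$, and the boundary-condition subspace inherits the embedding since the constraint $\cB_i u=0$ is weight-independent.

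The paper itself gives no argument here and simply cites \cite[Theorem~15.1]{AmaAr}; your write-up supplies the elementary direct proof that the citation stands in for. There is no genuine methodological difference---the weight comparison plus interpolation functoriality is exactly the expected route---so the only contrast is that you have made the steps explicit rather than deferring to the reference.
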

\begin{proof}
See \cite[Theorem~15.1]{AmaAr}.
\end{proof}

%%%%%%%%%%%%%%%%%%%%%%%%%%%%%%%%%%%%%%%%

\section{\bf Maximal $L_p$-regularity of  second order elliptic differential operators}

Suppose that $\sA: \mathcal{D}(\mathring{\M})\rightarrow \Gamma(\M)$ is a second order differential operator defined as follows.
\begin{align}
\label{S4: operator A}
\sA u:= -\div(\rho^{2-\lambda}\gd u)+\ev(\nabla u, a_1) +a_0 u,
\end{align}
with $a_1\in \Gamma(\M,T\M)$ and $a_0\in \C^\M$, 
for any $u\in C^\infty(\M)$ and some $\lambda\in\R$. 
We call $\sA$  {\em $(\rho,\lambda)_\R$-regular} if its coefficients satisfy
$$ a_1\in W_\infty^{1,\lambda}(\M; T_\R\M),\quad a_0\in L^\lambda_\infty(\M;\R).$$
Here $T_\R\M$ denotes the real tangent bundle.

Throughout, we assume that the singular data $[\![\rho]\!]$ and the constant $\lambda$ satisfy
\begin{align}
\label{S4: rho & lambda}
\|\rho\|_\infty \leq 1,\quad \lambda> 0. 
\end{align}
%Note that  the case $\lambda=0$ has been studied in \cite{Ama13b}.  In this case, actually no restriction for $\|\rho\|_\infty$ is required.

\subsection{\bf $\mathscr{R}$-sectorial Operators on singular manifolds}

I first introduce several basic concepts in the study of elliptic differential operators. 
The reader may refer to the treatises \cite{Ama95}, \cite{ArenGrohNage86}, \cite{DenHiePru03} and \cite{PruSim16} for more details of these concepts.

For $\theta\in (0,\pi]$, the open sector with angle $2\theta$ is denoted by
$$\Sigma_\theta:= \{\omega\in \mathbb{C}\setminus \{0\}: |\arg \omega|<\theta \}. $$
\begin{definition}
Let $X$ be a complex Banach space, and $\cA$ be a densely defined closed linear operator in $X$ with dense range. $\cA$ is called sectorial if $\Sigma_\theta \subset \rho(-\cA)$ for some $\theta>0$ and
$$ \sup\{\|\mu(\mu+\cA)^{-1}\| : \mu\in \Sigma_\theta \}<\infty. $$
The class of sectorial operators in $X$ is denoted by $\S(X)$. 
The spectral angle $\phi_\cA$ of $\cA$ is defined by
$$\phi_\cA:=\inf\{\phi:\, \Sigma_{\pi-\phi}\subset \rho(-\cA),\, \sup\limits_{\mu\in \Sigma_{\pi-\phi}} |\mu(\mu+\cA)^{-1}|<\infty \}. $$ 
\end{definition}

Let $\phi\in (0,\pi]$. Define
$$\cHi(\Sigma_\phi):=\{f: \Sigma_\phi \to \mathbb{C}: f \text{ is analytic and } \|f\|_\infty<\infty \} $$
and
$$\cH_0(\Sigma_\phi) =\bigcup_{\alpha,\beta<0} \cH_{\alpha,\beta}(\Sigma_\phi), $$
where 
$$ \cH_{\alpha,\beta}(\Sigma_\phi)=\{f\in \cH(\Sigma_\phi): |f|^\phi_{\alpha,\beta}<\infty \}. $$
Here $|f|^\phi_{\alpha,\beta}=\sup\limits_{|\mu|\leq 1} |\mu^\alpha f(\mu)| + \sup\limits_{|\mu|\geq 1}|\mu^{-\beta} f(\mu) |.$
\begin{definition}
Suppose that $\cA\in \S(X)$. Then $\cA$ is said to admit a bounded $\cHi$-calculus if there are $\phi>\phi_\cA$ and a constant $K_\phi$ such that
\begin{equation}
\label{S4: cHi}
|f(\cA) | \leq K_\phi \|f\|_{\infty} ,\quad f\in \cH_0(\Sigma_\phi).
\end{equation}
The class of such operators is denoted by $\cHi(X)$. The $\cHi$-angle of $\cA$ is defined by 
$$\phi^\infty_\cA:=\inf\{\phi>\phi_\cA: \eqref{S4: cHi} \text{ holds}\}.$$
\end{definition}

\begin{definition}
Let $X$ and $Y$ be two Banach spaces. A family of operators $\mathcal{T}\in \L(X,Y)$ is called $\mathscr{R}$-bounded, if there is a constant $C>0$ and $p\in [1,\infty)$ such that for each $N\in\N$, $T_j\in \mathcal{T}$ and $x_j\in X$ and for all independent, symmetric, $\{-1,1\}$-valued random  variables $\varepsilon_j$ on a probability space $(\Omega,\mathcal{M},\mu)$ the inequality
$$ |\sum\limits_{j=1}^N \varepsilon_j T_j x_j|_{L_p(\Omega; Y)} \leq C|\sum\limits_{j=1}^N \varepsilon_j  x_j|_{L_p(\Omega; X)} $$
is valid. The smallest such $C$ is called $\mathscr{R}$-bound of $\mathcal{T}$. We denote it by $\mathscr{R}(\mathcal{T})$.
\end{definition}

\begin{definition}
Suppose that $\cA\in \S(X)$. Then $\cA$ is called $\mathscr{R}$-sectorial if there exists some $\phi>0$ such that
$$\mathscr{R}_\cA(\phi):=\mathscr{R}\{\mu(\mu+\cA)^{-1}: \mu\in \Sigma_\phi \}<\infty. $$
The $\mathscr{R}$-angle $\phi^R_\cA$ is defined by
$$\phi^R_\cA:=\inf\{ \theta\in (0,\pi): \mathscr{R}_\cA(\pi-\theta) <\infty \}. $$
The class of $\mathscr{R}$-sectorial operators in $X$ is denoted by $\RS(X)$.
\end{definition}

\begin{definition}
A Banach space $X$ said to belong to the class $\mathcal{HT}$ if the Hilbert Transform defined by
$$H(f)(t):=\lim\limits_{\epsilon\to 0} \int_{|s|>\epsilon} f(t-s)\frac{ds}{\pi s} ,\quad t\in\R,\, f\in C_0(\R,X) $$
can be extended to a bounded linear operator on $L_p(\R,X)$ for some $p\in (1,\infty)$.
\end{definition}
It is a  well-known  fact that $L_p(\M)$ is of class $\mathcal{HT}$. This immediately implies that for any $\vartheta\in\R$
\begin{equation}
\label{S4: HT}
L^\vartheta_p(\M) \text{  is of class  }\mathcal{HT},\quad 1<p<\infty.
\end{equation}

Suppose that $X$ is a Banach space of class $\mathcal{HT}$. 
Then by \cite[formula~(2.15), Remark~3.2(1), Theorem~4.5]{DenHiePru03}, 
we obtain
the inclusions
\begin{equation}
\label{S4: all classes}
\cHi(X) \subset   \RS(X) \subset \S(X),
\end{equation}
and the inequalities
$$\phi^\infty_\cA  \geq \phi^R_\cA \geq \phi_\cA. $$

Recall an operator $\cA$ is said to belong to the class $\cH(X_1,X_0)$ for some densely embedded Banach couple $X_1\overset{d}{\hookrightarrow}X_0$, if $-\cA$ generates a strongly continuous analytic semigroup on $X_0$ with $dom(\cA)=X_1$.  $\cA\in \cH(X_1,X_0)$ iff there exists some $\omega\geq 0$ such that
$$\omega+\cA \in \S(X_0) $$
with spectral angle $\phi_\cA <\pi/2$ and $dom(\cA)=X_1$.

Let $X_\R$ be a real Banach lattice with an order $\leq$. See \cite[Chapter~C-I]{ArenGrohNage86}. The complexification of $X_\R$ is a complex Banach lattice defined as follow
\begin{equation}
\label{S4.1: Banach lattice}
X:=X_\R \oplus i X_\R. 
\end{equation}
The positive cone of $X_\R$ is defined by
$$X_\R^+:=\{x\in X_\R:\, 0\leq x\}. $$
\begin{definition}
Let $\vartheta\in\R$, and $X$ be a complex Banach lattice defined as in \eqref{S4.1: Banach lattice}.
Suppose that $\cA\in \S(X)$. Then the semigroup $\{e^{-t\cA}\}_{t\geq 0}$ is real if 
$$e^{-t\cA}X_\R \subset X_\R ,\quad t\geq 0.$$ 
We say that $\{e^{-t\cA}\}_{t\geq 0}$ is positive if 
$$e^{-t\cA}X_\R^+ \subset X_\R^+  ,\quad t\geq 0.$$ 
\end{definition}
For example, taking $X=L_2^\vartheta(\M)$, $\{e^{-t\cA}\}_{t\geq 0}$  real means
$$e^{-t\cA} L_2^\vartheta(\M;\R) \subset L_2^\vartheta(\M;\R) ,\quad t\geq 0,$$ 
and $\{e^{-t\cA}\}_{t\geq 0}$  positive means
$$e^{-t\cA} L_2^\vartheta(\M;\R_+) \subset L_2^\vartheta(\M;\R_+) ,\quad t\geq 0.$$

\begin{prop}
\label{S4: main prop}
Suppose that $(\M,g; \rho)$ is a $C^2$-singular manifold.
Let $\lambda^\prime\in \R$,  and $\lambda>0$.  
Furthermore, assume that the differential operator
\begin{align*}
\cA u:= -\div(\rho^{2-\lambda} \gd u)+\ev(\nabla u, a_1) +a_0 u
\end{align*}
is $(\rho,\lambda)_\R$-regular and satisfies that
$\Rp(\rho^\lambda a_0)$ is so large that there exists some $C_1<2$ and $\omega<0$  such that
\begin{align*}
%\label{S3: A3'}
&\rm{essinf}(\Rp(\rho^{\lambda}a_0)+\omega )>0; \\
%\label{S3: A2'}
&\rho^{\lambda-1} |2\lambda^\prime\rho^{1-\lambda}\gd \rho +a_1|_g \leq C_1 \sqrt{C_{\hat{\sigma}}(\Rp(\rho^{\lambda}a_0) +\omega )};\\
%\label{S3: A4'}
&\rho^{\lambda-1} | (2\lambda^\prime-\lambda)\rho^{1-\lambda}\gd \rho +a_1|_g \leq C_1 \sqrt{C_{\hat{\sigma}}(\Rp(\rho^{\lambda}a_0) +\omega )}. 
\end{align*}
Let $\sA^{\lambda^\prime}_{\cB_0}:= \sA|_{W^{2,\lambda^\prime-\lambda}_{p,\cB_0}(\M)}$.
Then $\sA^{\lambda^\prime}_{\cB_0}$ generates an analytic contraction strongly continuous semigroup on $L_p^{\lambda^\prime}(\M)$ with domain
$W^{2,\lambda^\prime-\lambda}_{p,\cB_0}(\M)$ for all $1<p<\infty$.
\end{prop}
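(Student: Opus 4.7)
The plan is to first establish generation on the Hilbert space $L_2^{\lambda'}(\M)$ via form methods, then lift the result to $L_p^{\lambda'}(\M)$ for every $p\in(1,\infty)$ through a Beurling--Deny--Ouhabaz analysis of the semigroup together with the interpolation machinery of Duong. The contraction property and the correct sector will come out of the sharp accretivity estimate that the two smallness hypotheses are tailored to produce.

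First I would attach to $\sA$ the sesquilinear form $\aw$ on $L_2^{\lambda'}(\M)$ obtained by pairing $\sA u$ with $\rho^{2\lambda'}\bar v$ and integrating by parts via Theorem~\ref{S2: divergence thm}. The divergence term $-\div(\rho^{2-\lambda}\gd u)$ tested against $\rho^{2\lambda'}\bar v$ yields the principal part $\int_\M \rho^{2\lambda'+2-\lambda}(\gd u\,|\,\overline{\gd v})_g\,d\mu_g$ together with a cross term proportional to $2\lambda'\rho^{2\lambda'+1-\lambda}(\gd \rho\,|\,\gd u)_g\,\bar v$ that arises from differentiating the weight. The Dirichlet condition built into $\cB_0 u = 0$ kills the boundary integral. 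Combined with the first-order coefficient $a_1$, the non-principal gradient coefficient is exactly $\rho^{1-\lambda}(2\lambda'\rho^{1-\lambda}\gd\rho + a_1)$, and the first hypothesized inequality is precisely the Cauchy--Schwarz input that lets this be absorbed into the principal part plus $\Rp(\rho^\lambda a_0)+\omega$. The strict bound $C_1<2$ then yields strict sectoriality of $\aw$ with angle strictly less than $\pi/2$, so Kato's representation theorem delivers an $m$-sectorial operator whose associated semigroup is analytic and contractive on $L_2^{\lambda'}(\M)$, with domain identified by weighted elliptic regularity as $W^{2,\lambda'-\lambda}_{2,\cB_0}(\M)$.

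Next I would verify that $\{e^{-t\sA}\}_{t\geq 0}$ is real, positive, and $L_\infty^{\lambda'}$-contractive by checking Ouhabaz's truncation criteria: test $\aw(u,(|u|-1)^+\sg(u))\geq 0$ for $L_\infty^{\lambda'}$-contractivity and $\aw(u^+,u^-)\leq 0$ for positivity. The second smallness hypothesis, in which $2\lambda'$ is shifted to $(2\lambda'-\lambda)$, is exactly the Cauchy--Schwarz input required for these truncated form identities, because differentiating the truncation changes how the weight $\rho^{2\lambda'}$ distributes between $u$ and its cutoff and effectively shifts the weight exponent by $-\lambda$. With $L_2$-analyticity, positivity, and $L_\infty^{\lambda'}$-contractivity in hand, a Stein--Duong interpolation argument (the point of Duong's method) lifts the analytic contraction semigroup to $L_p^{\lambda'}(\M)$ for every $p\in(1,\infty)$; the identification of the domain with $W^{2,\lambda'-\lambda}_{p,\cB_0}(\M)$ in the $L_p$ setting then follows from the weighted $L_p$ elliptic theory developed in the preceding sections.

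The hardest point is the integration-by-parts step in the singular weighted setting: Theorem~\ref{S2: divergence thm} must be combined with a density argument to justify the form identity for test functions in $W^{2,\lambda'-\lambda}_{2,\cB_0}(\M)$ that are not compactly supported near the singular ends, and one must rule out any surface contribution as $\rho\to 0$. The normalization $\|\rho\|_\infty\leq 1$ together with $\lambda>0$ is exactly what produces the decay of the weighted volume form $\rho^{2\lambda'+2-\lambda}\,d\mu_g$ needed for cutoffs built from $\mathcal{D}(\mathring\M)$ to converge. A secondary but essential technical point is sharpness: the strict inequality $C_1<2$ (rather than $C_1\leq 2$) is what yields strict sectoriality of $\aw$ and hence \emph{analyticity} of the semigroup, rather than merely a $C_0$-contraction semigroup.
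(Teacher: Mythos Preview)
Your overall strategy---form methods on $L_2^{\lambda'}(\M)$ followed by extension to $L_p^{\lambda'}(\M)$---matches the paper's, and your reading of the first smallness condition as the Cauchy--Schwarz input for coercivity of the form is correct. However, you have skipped what is in fact the main content of this proposition. The paper states explicitly that the generation statement itself was already proved in \cite[Section~3]{Shao15} under the same hypotheses, but with an \emph{incorrect} domain $\mathring{W}^{2,\lambda'-\lambda}_p(\M)=W^{2,\lambda'-\lambda}_{p,\cB_0}\cap W^{2,\lambda'-\lambda}_{p,\cB_1}$; the purpose of the present proof is precisely to fix that and show the domain is $W^{2,\lambda'-\lambda}_{p,\cB_0}(\M)$ alone. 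Your single phrase ``identified by weighted elliptic regularity'' is exactly the step that had previously gone wrong and that needs a new argument here.

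The paper's device for the domain identification is worth knowing. After the form construction yields the $m$-sectorial operator $\Aw$ with abstract domain $D(\Aw)=\{u\in X:\Aw u\in L_2^{\lambda'}(\M)\}$, one introduces the rescaled operator $\Bw u:=\rho^\lambda\Aw u$ and observes $\Bw\in\Lis(D(\Aw),L_2^{\lambda'-\lambda}(\M))$. Separately, $\Bw$ is a \emph{uniformly} elliptic operator in the rescaled picture, so \cite[Theorem~5.2]{Ama13b} gives $\Bw\in\Lis(W^{2,\lambda'-\lambda}_{2,\cB_0}(\M),L_2^{\lambda'-\lambda}(\M))$ directly. Comparing the two isomorphisms forces $D(\Aw)\doteq W^{2,\lambda'-\lambda}_{2,\cB_0}(\M)$. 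The $L_p$ case then follows the arguments of \cite[Section~3.2]{Shao15}. Your Beurling--Deny--Ouhabaz route for the $L_p$ extension is a reasonable alternative, but it does not by itself pin down the $L_p$-domain; that still requires a separate regularity input analogous to the $\rho^\lambda$-conjugation trick.
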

It is proved under the same conditions (with $\sigma=\tau=0$ and $\vec{a}=\rho^{2-\lambda}$) in \cite[Section~3]{Shao15} that 
$\sA^{\lambda^\prime}_{\cB_0}$ generates an analytic contraction strongly continuous semigroup on $L_p^{\lambda^\prime}(\M)$ with domain
$\mathring{W}^{2,\lambda^\prime-\lambda}_p(\M)$
where
$$
\mathring{W}^{2,\lambda^\prime-\lambda}_p(\M_1)=W^{2,\lambda^\prime-\lambda}_{p,\cB_0}(\M_1) \cap W^{2,\lambda^\prime-\lambda}_{p,\cB_1}(\M_1).
$$
However, the characterization of the domain of $\sA$ given in \cite{Shao15} is erroneous. We will fix this error and state the necessary changes herein. 
\begin{proof}
For $\omega$ satisfying the given conditions, let $\Aw:=\sA+\omega \rho^{-\lambda}$ and we associate  with $\sA_1$ a form operator $\aw$ with $D(\aw)=X=W^{1,\lambda^\prime-\lambda/2}_{2,\cB_0}(\M,V)$ defined by 
\begin{align*}
&\aw(u,v)\\
=& \langle \vec{a}\cdot\gd u | \gd v  \rangle_{2,\lambda^\prime}
+ \langle \ev(\nabla u, (2\lambda^\prime+2\tau-2\sigma)\vec{a}\cdot \gd \log \rho +a_1) | v \rangle_{2,\lambda^\prime}
\\
&+\langle (a_0+\omega\rho^{-\lambda}) u | v \rangle_{2,\lambda^\prime} 
\end{align*}
for all $u,v\in X$. Here $\langle u | v \rangle_{2,\lambda^\prime}$ is the inner product in $L^{\lambda^\prime}_2(\M,V)$. 

By the given conditions, $\sA$ satisfies the conditions (A1)-(A2) and (A3'). Following the argument in \cite[Section~3.1]{Shao15}, we can show that $\aw$ is continuous and $X$-coercive, cf. \cite[Proposition~3.2]{Shao15}. This shows that $\aw$ with $D(\aw)=X$ is densely defined, sectorial and closed on $L_2^{\lambda^\prime}(\M)$. By \cite[Theorems~VI.2.1, IX.1.24]{Kato80}, there exists an associated operator $T$ such that $-T$ generates a contractive strongly continuous analytic semigroup on $L_2^{\lambda^\prime}(\M)$, i.e., $\|e^{-tT}\|_{\L(L_2^{\lambda^\prime}(\M))}\leq 1$ for all $t\geq 0$, with domain 
$$D(T):=\{u\in X, \exists ! v\in L_2^{\lambda^\prime}(\M):\aw(u,\phi)=\langle v | \phi \rangle_{2,\lambda^\prime}, \forall \phi\in X \},\quad T u=v, $$
which is a core of $\aw$.
$T$ is unique in the sense that there exists only one operator satisfying 
$$\aw(u,v)= \langle T u, v \rangle_{2,\lambda^\prime},\quad u\in D(T),\, v\in X.$$
On the other hand,  we can infer from \cite[formula~(2.9), Definition~(2.12)]{Shao15} that
$$\langle \Aw u | v \rangle_{2,\lambda^\prime}= \aw(u,v),\quad u,v\in X.$$
So by the uniqueness of $T$, we have
$$\Aw|_{D(T)}=T .$$
Therefore, $-\Aw$ generates a contractive strongly continuous analytic semigroup on $L_2^{\lambda^\prime}(\M)$ with domain $D(\Aw)$:
$$D(\Aw):=\{u\in X, \exists! v\in L_2^{\lambda^\prime}(\M):\aw(u,\phi)=\langle v | \phi \rangle_{2,\lambda^\prime}, \forall \phi\in X \},\quad \Aw u=v. $$
It can be verified that $D(\Aw)$ can be equivalently stated as
$$D(\Aw):=\{u\in X:\, \Aw u\in L_2^{\lambda^\prime}(\M) \}. $$

One can conclude from Propositions~3.1, 3.2 and \cite[Propositions~2.5, 2.9]{Shao15}  that
$$
W^{2,\lambda^\prime-\lambda}_{2,\cB_0}(\M)\subset D(\Aw).
$$
Define 
$$\Bw u:=\rho^\lambda\Aw u.$$
By the previous discussions and Proposition~\ref{S2: change of wgt}, we infer that  
\begin{equation}
\label{S3: Bw1}
\Bw\in \Lis( D(\Aw),L_2^{\lambda^\prime-\lambda}(\M) ).
\end{equation}
On the other hand, in view of \cite[Theorem~5.2]{Ama13b}, by the given conditions and a similar discussion to $\Aw$, we can prove that
$$
\Bw\in \cH(W^{2,\lambda^\prime-\lambda}_{2,\cB_0}(\M),L_2^{\lambda^\prime-\lambda}(\M))\cap \Lis(W^{2,\lambda^\prime-\lambda}_{2,\cB_0}(\M),L_2^{\lambda^\prime-\lambda}(\M)).
$$
Combining with \eqref{S3: Bw1}, we can infer that
$$
D(\Aw)\doteq W^{2,\lambda^\prime-\lambda}_{2,\cB_0}(\M ).
$$
The rest of the proof follows the same arguments as in \cite[Section~3.2]{Shao15}.
\end{proof}
\begin{remark}
Note that the exclusion of $p\in \{3,3/2\}$ in \cite[Theorem~5.2]{Ama13b} is only necessary for maximal regularity theory, cf. Section~4.2, to guarantee $2-2/p\neq i+1/p$ for $i=0,1$. So the generation of analytic semigroup in  \cite[Theorem~5.2]{Ama13b} actually holds for all $1<p<\infty$.
\end{remark}

The following theorem is the theoretical basis of this paper. 
\begin{theorem}
\label{S4: main theorem}
Suppose that $(\M,g; \rho)$ is a $C^2$-singular manifold satisfying $\rho\leq 1$,
$$|\nabla \rho|_g \sim {\bf 1},\quad \|\Delta \rho\|_\infty<\infty$$
on $\M_r:=\{\p\in \M: \rho(\p)<r\}$ for some $r\in (0,1]$. Moreover, assume that the set
$$S_{r_0}:=\{\p\in \M: \rho(\p)=r_0\} $$
is compact and closed for $r_0\in (0,r)$.
Let $\lambda^\prime\in \R$,  and $\lambda\in (0,1)\cup(1,\infty)$.  
Furthermore, assume that the differential operator
\begin{align*}
\sA u:= -\div(\rho^{2-\lambda} \gd u)+\ev(\nabla u, a_1) +a_0 u
\end{align*}
is $(\rho,\lambda)_\R$-regular. Let $\sA^{\lambda^\prime}_{\cB_i}:= \sA|_{W^{2,\lambda^\prime-\lambda}_{p,\cB_i}(\M)}$ for $i=0,1$.
Then there exists some $\omega\geq 0$ such that
$$\omega+\sA^{\lambda^\prime}_{\cB_i} \in \RS( W^{2,\lambda^\prime-\lambda}_{p,\cB_i}(\M),  L^{\lambda^\prime}_p(\M)),\quad 1<p<\infty$$
with $\mathscr{R}$-angle $\phi^R_{\omega+\sA^{\lambda^\prime}_{\cB_i}}<\pi/2$.
\end{theorem}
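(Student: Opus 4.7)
The plan is to combine three ingredients. First, Proposition~\ref{S4: main prop} already settles the generation of an analytic semigroup on $L_p^{\lambda'}(\M)$ after a large enough zero-order shift, in the Dirichlet case $\cB_0$. Second, the extra geometric hypotheses ($|\nabla \rho|_g\sim\mathbf{1}$, $\|\Delta \rho\|_\infty<\infty$ on $\M_r$ and compactness of the level sets $S_{r_0}$) are exactly what is needed to derive pointwise Gaussian heat kernel estimates for the semigroup $e^{-t\sA_{\cB_i}^{\lambda'}}$. Third, the adaptation of T.X.~Duong's method announced in the introduction feeds these Gaussian bounds into a bounded $\cHi$-calculus on the weighted $L_p$ spaces; via \eqref{S4: HT} and the inclusion chain \eqref{S4: all classes}, this upgrades to $\mathscr{R}$-sectoriality with angle less than $\pi/2$.

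\textbf{Reduction and generation.} Choose $\omega_0>0$ so large that $\Rp(\rho^\lambda a_0)+\omega_0$ is uniformly bounded below by a large constant. Since the left-hand sides of the three gradient-type inequalities in Proposition~\ref{S4: main prop} are independent of $\omega_0$ while the right-hand sides grow like $\sqrt{\omega_0}$, all hypotheses are met for the shifted operator $\sA+\omega_0\rho^{-\lambda}$, so Proposition~\ref{S4: main prop} delivers analytic semigroup generation on $L_p^{\lambda'}(\M)$ with domain $W^{2,\lambda'-\lambda}_{p,\cB_0}(\M)$ in the Dirichlet case. For the Neumann realization $\cB_1$ I would run the same form argument, but on the test space $W^{1,\lambda'-\lambda/2}_{2}(\M)$ (with no trace restriction); Theorem~\ref{S2: divergence thm} shows that the natural boundary condition produced by $\aw$ is precisely $\cB_1 u=0$, and the continuity/coercivity estimates of \cite[Section~3.1]{Shao15} carry over verbatim because they did not rely on vanishing boundary values.

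\textbf{Gaussian bounds and $\cHi$-calculus.} Let $\d$ be the intrinsic distance associated with the degenerate cometric $\rho^{2-\lambda}g^{*}$. The hypothesis $|\nabla\rho|_g\sim\mathbf{1}$ makes $\rho$ behave as a transverse coordinate near the singular set, so integrating $r^{(2-\lambda)/2}$ in that direction gives a well-behaved $\d$ in which each level set $S_{r_0}$ is uniformly $\d$-bounded. Davies' exponential perturbation applied to $\aw$, together with the coercivity from Step~1, produces the $L_2$-off-diagonal bound $\|\mathbf{1}_F\,e^{-t\sA_{\cB_i}^{\lambda'}}\mathbf{1}_E\|\leq e^{-c\,\d(E,F)^2/t}$. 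Uniform volume-doubling of $\d$-balls with respect to the weighted measure $\rho^{-2\lambda'+2\sigma-2\tau}\,d\mu_g$, inherited from the uniform regularity of $(\M,g/\rho^2)$ and finite multiplicity of the atlas, together with Moser-type iteration, then lift this to pointwise Gaussian bounds for the kernel. Plugging those bounds into the modified Duong machinery gives a bounded $\cHi$-calculus for $\omega+\sA_{\cB_i}^{\lambda'}$ on $L_p^{\lambda'}(\M)$ for all $1<p<\infty$, with $\cHi$-angle $<\pi/2$. Since $L_p^{\lambda'}(\M)\in\mathcal{HT}$ by \eqref{S4: HT}, the inclusion $\cHi(X)\subset\RS(X)$ in \eqref{S4: all classes} finishes the argument, the $\mathscr{R}$-angle inheriting the bound $<\pi/2$.

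\textbf{Main obstacle.} The hard part is Step~2: establishing honest Gaussian bounds on a possibly very singular weighted Riemannian manifold, including uniform doubling of $\d$-balls with respect to the weighted measure, and dealing with the Neumann realization $\cB_1$, where the conormal derivative itself degenerates like $\rho^{2-\lambda}$. The exclusion $\lambda\neq 1$ is structural: the adapted distance $\d$ changes character across that threshold (cylindrical versus conical radial asymptotics), and the volume-growth exponent entering the Gaussian kernel jumps accordingly. The hypothesis $\|\Delta\rho\|_\infty<\infty$ near the singular set is expected to be used exactly to control the curvature-like correction appearing when conjugating the form by $e^{\alpha\phi}$ in Davies' perturbation; without it one loses the quantitative $e^{-c\,\d^2/t}$ decay and the whole Duong bootstrap breaks down.
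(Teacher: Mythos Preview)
Your overall architecture---get a bounded $\cHi$-calculus and then invoke \eqref{S4: HT} and \eqref{S4: all classes}---matches the paper, but the route you propose to the $\cHi$-calculus is quite different from what the paper actually does, and your version has a real gap.

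The paper does \emph{not} establish Gaussian heat-kernel bounds, volume doubling, or Moser iteration anywhere. Instead it proceeds by localization plus transference. First, the geometric hypotheses $|\nabla\rho|_g\sim\mathbf{1}$ and $\|\Delta\rho\|_\infty<\infty$ on $\M_r$ are used only to check that $h:=M\,\mathrm{sg}(1-\lambda)\log\rho$ lies in $\sH_\lambda(\M_{r_1},g;\rho)$; this is why $\lambda=1$ is excluded. Second, using compactness of the level sets $S_{r_0}$, $\M$ is split via a collar/partition of unity into a uniformly regular piece $\M_0$ (where Amann's \cite{Ama13b} gives $\mathscr{R}$-sectoriality directly) and a singular piece $\M_1$. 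On $\M_1$, conjugation by $e^{zh}$ together with Proposition~\ref{S4: main prop} yields that the semigroup is analytic, contractive, \emph{and positive}. The $\cHi$-calculus then comes not from kernel estimates but from the Coifman--Rochberg--Weiss transference principle for positive contraction semigroups, feeding into Duong's argument; McIntosh handles $p=2$ with angle $<\pi/2$, and Stein interpolation pushes the angle below $\pi/2$ for all $p$. The two pieces are reassembled by a retraction/coretraction and a lower-order commutator perturbation.

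Your Step~2 is therefore the gap. You assume uniform volume doubling for the weighted measure with respect to the intrinsic distance $\d$ of the degenerate metric and pointwise Gaussian bounds via Davies/Moser, but neither is proved, and neither follows in any evident way from the stated hypotheses; you yourself flag this as ``the hard part.'' The paper's approach sidesteps all of it: positivity plus contractivity of the semigroup is enough for transference, and that is what Proposition~\ref{S4: main prop} and the $\sH_\lambda$ structure are designed to deliver on the singular piece. You also miss the localization: applying Proposition~\ref{S4: main prop} globally, as you suggest, is not how the paper argues; on the uniformly regular part the result is simply quoted from \cite{Ama13b}, and the glueing requires an explicit commutator estimate.
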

\begin{proof}
(i) 
For $M>0$, we set 
\begin{align}
\label{S4: h}
h(\p)= M\sg(1-\lambda)\log \rho(\p), \quad \p\in\M. 
\end{align}
A direct computation shows that
$$\rho\gd h =M\sg(1-\lambda) \gd \rho,$$
and
$$
\rho^\lambda \div(\rho^{2-\lambda}\gd h)=M\sg(1-\lambda)\rho \Delta \rho + M|1-\lambda| |\gd \rho|_g^2.  $$
Together with (S3) and  (S4), one can then show that $h \in BC^{2,0}(\M)$,  and
\begin{align}
\label{S4: h on ends}
h\in \sH_\lambda(\M_{r_1},g; \rho) 
\end{align}
with parameters $(c,M)$ for some $r_1\leq r$ sufficiently small. Hence, $(\M_1,g;\rho)$ is a {\em singular manifold} with property $\sH_\lambda$.
%\smallskip\\
%By the implicit function function theorem, $S_{r_0}$ is a compact submanifold. Then   the fact that $(\M,g;\rho)$ is a {\em singular manifold} with $\sH_\lambda$-ends is simply a consequence of  the tubular neighborhood theorem.

(ii) 
Following the proof of \cite[Theorem~5.17]{Shao15}, we introduce a retraction-coretraction system on the weighted Sobolev spaces over $\M$. 
Choose $r_2\in (0,r_1)$, where $r_1$ is defined in \eqref{S4: h on ends}. Define 
$$\M_0:=\{\p\in \M: \rho(\p)\in [r_2,1]\}.$$
Note that $\M_0$ is a {\em uniformly regular Riemannian manifold} with boundary. With a little abuse of notation, we put 
$\M_1:=\M_{r_1}.$

Based on the collar neighborhood theorem, we can find an open neighborhood $U$ of $S_{r_1}$ in $\M_1 \cap \M_0$ such that there exists a diffeomorphism 
$$\phi: U \to S_{r_1} \times [0,1), \quad \phi^* g|_U= g|_{S_{r_1}} + dt^2 ,$$
with $\phi (S_{r_1})=S_{r_1} \times \{0\}$.
We choose functions $\xi, \tilde{\xi} \in BC^\infty([0,1), [0,1])$ such that
$$\xi|_{[0,1/2]}\equiv 1, \quad \xi|_{[3/4,1)} \equiv 0; \quad \tilde{\xi}|_{[0,1/4]}\equiv 0, \quad \tilde{\xi}|_{[1/2,1)} \equiv 1.$$
Set $\hat{\pi}_0:=\phi^*({\bf 1}_{S_{r_1}}\otimes \xi)$ and  $\hat{\pi}_1:=\phi^*({\bf 1}_{S_{r_1}}\otimes \tilde{\xi})$.
We define ,
\begin{align*}
\tilde{\pi}_1(\p) =
\begin{cases}
1, \quad \quad & \p \in \M_1\setminus U,\\
 \hat{\pi}_1, &\p\in U,\\
0, & \text{elsewhere},
\end{cases}
\quad \text{ and } \quad
\tilde{\pi}_0(\p) =
\begin{cases}
1, \quad \quad & \p \in \M\setminus \M_1,\\
 \hat{\pi}_0, &\p\in U,\\
0, & \text{elsewhere}.
\end{cases}
\end{align*}
For $j=0,1$, we set
$$\pi_j = \frac{\tilde{\pi}_j}{\sqrt{ \tilde{\pi}_0^2 + \tilde{\pi}_1^2}}. $$
Then $\{\pi_0^2, \pi_1^2 \}$ forms a partition of unity on $\M$, and $\pi_j\in BC^{\infty,0}(\M).$ 
Define 
$$ \bW_{p,\cB_i}^{s,\vartheta}(\M):=  W_{p,\cB_i}^{s,\vartheta}(\M_0) \times W_{p,\cB_0}^{s,\vartheta}(\M_1).$$ 
It is understood that on $\M_0$, the singularity function can be taken as ${\bf 1}_{\M_0}$, and thus the definition of weighted function spaces on $\M_0$ is independent of the choice of the weight $\vartheta$.
We further introduce two maps:
$$\Lambda^c: W_{p,\cB_i}^{s,\vartheta}(\M)\to  \bW_{p,\cB_i}^{s,\vartheta}(\M):\quad u\mapsto (\pi_0 u, \pi_1 u),$$
and 
$$\Lambda:  \bW_{p,\cB_i}^{s,\vartheta}(\M)\to  W_{p,\cB_i}^{s,\vartheta}(\M):\quad ( u_0, u_1 ) \mapsto  \pi_0 u_0 + \pi_1 u_1.$$
By Proposition~\ref{S2: pointwise multiplication}, we immediately conclude that $\Lambda$ is a retraction from the space $ \bW_{p,\cB_i}^{s,\vartheta}(\M)$ to $W_{p,\cB_i}^{s,\vartheta}(\M)$ with $\Lambda^c$ as a coretraction.

(iii) 
We will prove that $\sA_1:=\sA^{\lambda^\prime}_{\cB_i}|_{\M_1}$ generates an analytic contraction positive strongly continuous semigroup on $L_p^{\lambda^\prime}(\M_1)$ with domain $W^{2,\lambda^\prime-\lambda}_{p,\cB_0}(\M_1)$ for $1<p<\infty$.

Based on Proposition~\ref{S4: main prop}, we can follow the proof of \cite[Theorem~5.18]{Shao15} and immediately prove that
$$\sA_1\in \cH(W^{2,\lambda^\prime-\lambda}_{p,\cB_0}(\M_1), L_p^{\lambda^\prime}(\M_1))$$ 
For this reason, I will only point out necessary modifications to show the contraction property and then present a proof for positivity-preserving herein.  

Let 
$$A_z:=-z \div(\rho^{2-\lambda}\gd h)- z^2\rho^{2-\lambda} |\gd h|_g^2 +z\ev(a_1,\nabla h) +a_0.$$
For any $z=a+ib\in \mathbb{C}$ with $|z|=1$ and $(c,M)$, the parameters of $h$ in \eqref{S4: h on ends}, we quote two conditions from \cite[Section~5.1]{Shao15}.
\begin{itemize}
\item[(H2)] $\displaystyle a\in  (-\frac{1}{2Mc^3}, 0)$, and $|z|=1$.
\item[(H3)] $M$ is sufficiently large such that 
\begin{align*}
\qquad&\rho^\lambda\Rp(A_z)
>C_0 \rho^2|\gd h|^2_g -\omega\\
&\rho^{\lambda-1}|-2z\rho^{2-\lambda}\gd h + t\rho^{2-\lambda}\gd\log\rho +a_1|_g< \frac{2}{\sqrt{C_1}} \sqrt{\rho^\lambda\Rp(A_z)+\omega}
\end{align*}
for some $\omega<0$, $C_0>1$ and $C_1\in (1,C_0)$. Here $t\in \{2\lambda^\prime, 2\lambda^\prime-\lambda\}$.
\end{itemize}

Let $\sA_h= e^{-zh} \circ \sA_1 \circ e^{zh}$.
$(\M_1,g;\rho)$ is a {\em singular manifold} with property $\sH_\lambda$. Based on Proposition~\ref{S4: main prop}, the only changes we need to make to the results in \cite[Section~5]{Shao15} is to change the domain of $\sA_1$ from $\mathring{W}^{2,\lambda^\prime -\lambda}_p(\M_1)$ to $W^{2,\lambda^\prime -\lambda}_{p,\cB_0}(\M_1)$. A modification of \cite[formula~(5.5), Remark~5.22]{Shao15} implies that 
\begin{align}
\label{S4: Ah-semigroup}
\sA_h\in \cH(W^{2,\lambda^\prime -\lambda}_{2,\cB_0}(\M_1), L^{\lambda^\prime}_2(\M_1))\cap \Lis(W^{2,\lambda^\prime -\lambda}_{2,\cB_0}(\M_1), L^{\lambda^\prime}_2(\M_1)) , 
\end{align}
and the semigroup $\{e^{-t\sA_h}\}_{t\geq 0}$ is contractive. 

We have $e^{zh}=\rho^{\sg(1-\lambda)zM}=\rho^{\sg(1-\lambda)aM} \rho^{\sg(1-\lambda) bMi}$.
In Step (iii) of the proof for \cite[Theorem~5.18]{Shao15}, we have proved
$$\rho^{\sg(1-\lambda)bM i}\in BC^{2,0}(\M_1).$$
By Propositions~\ref{S2: pointwise multiplication}, \ref{S2: change of wgt} and the fact that
$e^{zh} e^{-zh}=e^{-zh} e^{zh} ={\bf 1}_{\M_1}$, we infer that
$$[u \mapsto e^{zh}u] \in  \Lis(W^{s,\vartheta}_{p,\cB_0}(\M_1), W^{s,\vartheta+\sg(\lambda-1)aM}_{p,\cB_0}(\M_1)),\quad 1<p<\infty,\quad 0\leq s\leq 2.$$
Note that $|\rho^{\sg(1-\lambda)bM i}|\equiv 1$, in particular, we have
\begin{equation}
\label{S4：e^zh isometry}
[u \mapsto e^{zh}u] \quad \text{is an isometry from } L^{\vartheta}_p(\M_1) \text{ to } L^{\vartheta+\sg(\lambda-1)aM}_p(\M_1).
\end{equation}
A similar argument to \cite[Theorem~5.2]{Shao15} (with proper change from $\mathring{W}^{2,\lambda^\prime -\lambda}_p(\M_1)$ to $W^{2,\lambda^\prime -\lambda}_{p,\cB_0}(\M_1)$) and \eqref{S4: Ah-semigroup} yield
\begin{align*}
\sA_1\in &\cH(W^{2,\lambda^\prime -\lambda+\sg(\lambda-1)aM}_{2,\cB_0}(\M_1),  L^{\lambda^\prime+\sg(\lambda-1)aM}_2(\M_1))\\
&\cap \Lis(W^{2,\lambda^\prime -\lambda+\sg(\lambda-1)aM}_{2,cB_0}(\M_1),  L^{\lambda^\prime+\sg(\lambda-1)aM}_2(\M_1)).
\end{align*}
Since $\lambda^\prime$ is arbitrary and $\sg(\lambda-1)aM\in (-1/2c^3, 1/2c^3)$, it implies that for any $\lambda^\prime\in\R$ 
$$\sA_1\in \cH(W^{2,\lambda^\prime -\lambda}_{2,\cB_0}(\M_1),  L^{\lambda^\prime}_2(\M_1))\cap \Lis(W^{2,\lambda^\prime -\lambda}_{2,\cB_0}(\M_1),  L^{\lambda^\prime}_2(\M_1)) .$$
By \eqref{S4：e^zh isometry}, the semigroup $\{e^{-t\sA_1}\}_{t\geq 0}$ is contractive.

The assertion that $\sA_1$ generates an analytic contraction strongly continuous semigroup on $L_p^{\lambda^\prime}(\M_1)$ with domain $W^{2,\lambda^\prime-\lambda}_{p,\cB_0}(\M_1)$ for $1<p<\infty$ now follows from \eqref{S4：e^zh isometry} and a similar argument to Step (iv) and (v) of the proof for \cite[Theorem~5.18]{Shao15}.

The only thing left to prove is the positivity-preserving of the semigroup $\{e^{-t\sA_1}\}_{t\geq 0}$.
It is easy to see that 
\begin{center}
$u\in W^{2,\lambda^\prime -\lambda}_{2,\cB_0}(\M_1)$ implies that $\bar{u}\in W^{2,\lambda^\prime -\lambda}_{2,\cB_0}(\M_1)$ and $\sA_1 \bar{u}=\overline{\sA_1 u}$. 
\end{center}
Here $\bar{u}$ stands for the complex conjugate of $u$. 
By \cite[Chapter~C-II Remark~3.1]{ArenGrohNage86}, the semigroup $\{e^{-t\sA_1}\}_{t\geq 0}$ is real.

On the other hand, one can verify that $L_p^{\lambda^\prime}(\M_1;\R)$ is a real Banach lattice and $u\in W^{2,\lambda^\prime-\lambda}_{p,\cB_0}(\M_1;\R)$ implies $u^+\in W^{2,\lambda^\prime-\lambda}_{p,\cB_0}(\M_1;\R)$ and by Theorem~\ref{S2: divergence thm}
\begin{align*}
&\quad \langle \sA_1 u^-,u^+ \rangle_{2,\lambda^\prime}
= \langle \rho^{2-\lambda}\gd u^- | \gd u^+  \rangle_{2,\lambda^\prime}\\
&+ \langle 2\lambda^\prime\rho^{2-\lambda} \gd \log \rho +a_1 | u^+ \gd u^- \rangle_{2,\lambda^\prime}
+\langle a_0 u^- | u^+ \rangle_{2,\lambda^\prime}
= 0. 
\end{align*}
We thus infer that
$$\langle \sA_1 u,u^+ \rangle_{2,\lambda^\prime}\geq 0.$$
This implies that $-\sA_1$ is dispersive.
It now follows from \cite[Theorem~2.1]{Phi62} or \cite[Chapter~C-II Theorem~1.2]{ArenGrohNage86} that 
the semigroup $\{e^{-t\sA_1}\}_{t\geq 0}$ is positive.

(iv) In this step, we will show that $\sA_1\in \cHi(L^{\lambda^\prime}_p(\M_1))$ with $\phi^\infty_{\sA_1}<\pi/2$ for all $1<p<\infty$. 

Given any $\psi\in L_1(\R_+)$ with compact support, we have
$$m(\sA_1):=\int_0^\infty e^{-t\sA_1}\psi(t)\, dt\in \L(L^{\lambda^\prime}_p(\M_1)). $$
By the discussion in Step~(iii), it is not hard to verify that the semigroup $\{\rho^{\lambda^\prime}\circ e^{-t\sA_1}\circ \rho^{-\lambda^\prime}\}_{t\geq 0}$ is positive contraction strongly continuous on $L_p(\M_1)$. We immediately have  
$$\hat{m}(\sA_1):=\int_0^\infty \rho^{\lambda^\prime}\circ e^{-t\sA_1}\circ \rho^{-\lambda^\prime}\psi(t)\, dt\in \L(L_p(\M_1)). $$
It was established in \cite[Theorem~3.5]{CoifWei78} that
$$\|\hat{m}(\sA_1)\|_{\L(L_p(\M_1))} \leq \|\psi^*\|,$$
where $\|\psi^*\|$ is the norm of the convolution operator $[u\mapsto u \ast \psi]$ on $L_p(\R)$. This gives rise to the following estimate. For any $u\in L^{\lambda^\prime}_p(\M_1)$,
\begin{align*}
\| m(\sA_1) u\|_{L^{\lambda^\prime}_p(\M_1)}&= (\int_{\M_1}  |\rho^{\lambda^\prime}\int_0^\infty e^{-t\sA_1} u \psi(t)\, dt|^p\, d\mu_g )^{1/p} \\
&= (\int_{\M_1}  |\hat{m}(\sA_1) \rho^{\lambda^\prime} u |^p\, d\mu_g )^{1/p}=\|\hat{m}(\sA_1) \rho^{\lambda^\prime} u\|_{L_p(\M_1)}\\
&\leq \|\psi^*\| \|\rho^{\lambda^\prime} u\|_{L_p(\M_1)}=\|\psi^*\| \|u\|_{L^{\lambda^\prime}_p(\M_1)}.
\end{align*}
Therefore, we obtain
\begin{equation}
\label{S4: transference}
\| m(\sA_1)\|_{\L(L^{\lambda^\prime}_p(\M_1))} \leq \|\psi^*\|.
\end{equation}
Based on \eqref{S4: transference}, we can follow the proof of \cite[Theorem~2]{Duong89} and prove that
\begin{equation}
\label{S3.1: Lp cHi}
\sA_1\in \cHi(L^{\lambda^\prime}_p(\M_1)),\quad\phi^\infty_{\sA_1}=\pi/2,\quad 1<p<\infty.
\end{equation}
When $p=2$,  we infer from \cite[Theorem~8]{McI86}  that
\begin{equation}
\label{S3.1: L2 cHi}
\sA_1\in \cHi(L^{\lambda^\prime}_2(\M_1)),\quad \phi^\infty_{\sA_1}<\pi/2.
\end{equation}
We denote this angle by $\phi_2$.

We can follow the ideas in \cite{Cow83} to prove $\phi^\infty_{\sA_1}<\pi/2$ for all $1<p<\infty$.
Given $0< \phi <\varphi < \frac{\pi}{2}<\beta$, where $\phi\in (\phi_2,\frac{\pi}{2})$ and $\beta>\frac{\pi}{2}$. There exists $\theta\in (0,1)$ such that
$$ \varphi=(1-\theta)\phi_2 +\theta \frac{\pi}{2}. $$
For $\psi\in(0,\pi)$,  let 
$$S_\psi:=\{x+iy\in \mathbb{C}: y\in (-\psi,\psi), x\in\R\} .$$
For any $f\in \cHi(\Sigma_\varphi)$, set $n(z):= f(e^z)$, where $z\in \Sigma_\varphi$. Then $n$ is analytic and bounded in $S_\varphi$.
By \cite[Lemma~1]{Cow83}, there exists a family $\{n_\zeta: \zeta=\xi+i\eta: \xi\in (0,\pi), \eta\in \R \}$ of functions satisfying the following properties:
\begin{itemize}
\item[(a)] $n_\zeta$ is analytic and bounded  in $S_{\xi-\varepsilon}$ for any small $\varepsilon>0$ and 
$$\sup\{ |n_\zeta(z)|:z\in S_{\xi-\varepsilon} \}\leq C \|n\|_\infty \varepsilon^{-1/2} $$
for some fixed $C>0$.
\item[(b)] $n_\varphi=n$.
\item[(c)] For any $x\in\R$, $[\zeta\mapsto n_\zeta(x)]$ is analytic.
\end{itemize}

Now we define
$$f_\zeta(z)=n_\zeta(\log(z)),\quad \zeta=\xi+i\eta,\quad z\in \Sigma_\xi. $$
Then $f_\zeta \in \cHi(\Sigma_{\xi-\varepsilon})$ for any small $\varepsilon>0$ and $f_\varphi=f$.

We first look at $\zeta$ with ${\Rp}\zeta=\phi$. Then for any $\phi_1\in (\phi_2,\phi)$, we have $f_\zeta\in \cHi(\Sigma_{\phi_1})$. 
By \eqref{S3.1: L2 cHi}, it holds
$$\|f_\zeta(\sA_1)\|_{\L(L^{\lambda^\prime}_2(\M))}\leq C_0\|f_\zeta\|_\infty\leq C C_0 \|f\|_\infty(\phi-\phi_1)^{-1/2}=:B_0. $$

When ${\Rp}\zeta=\beta$, by \eqref{S3.1: Lp cHi} and a similar argument as above
$$\|f_\zeta(\sA_1)\|_{\L(L^{\lambda^\prime}_q(\M))}\leq B_1 $$
for some fixed constant $B_1$.

We put $m_\zeta:=\rho^{\lambda^\prime}f_\zeta(\sA_1) \rho^{-\lambda^\prime}$. Then 
$$\|m_{\phi+i\eta}\|_{\L(L_2(\M))}\leq B_0, \quad \|m_{\beta+i\eta}\|_{\L(L_q(\M))}\leq B_1. $$
Now for $p$ satisfying $\frac{1}{p}=\frac{1-\theta}{2} +\frac{\theta}{q}$, applying the Stein interpolation theorem, we obtain
$$\|m_\varphi\|_{\L(L_p(\M))}\leq B_0^{1-\theta}B_1^\theta.$$
This implies that
$$\|f(\sA_1)\|_{\L(L^{\lambda^\prime}_p(\M))}= \|f_\varphi(\sA_1)\|_{\L(L^{\lambda^\prime}_p(\M))}\leq B_0^{1-\theta}B_1^\theta. $$ 
Since the choice of $q$ is arbitrary, we have proved that
$$\sA_1\in \cHi(L^{\lambda^\prime}_p(\M_1)),\quad\phi^\infty_{\sA_1}<\pi/2,\quad 1<p<\infty.$$

(v) 
By \eqref{S4: HT} and \eqref{S4: all classes}, it holds that 
\begin{equation}
\label{S4: A1-RS}
\sA_1 \in \RS(L^{\lambda^\prime}_p(\M_1)),\quad 1<p<\infty
\end{equation}
with $\phi^R_{\sA_1}<\pi/2$

On the other hand, $(\M_0,g)$ is uniformly regular, and thus $\rho|_{\M_0}\sim {\bf 1}_{\M_0}$. 
Therefore, the choice of weights is immaterial in the definition of function spaces on $(\M_1,g)$.  $\sA_0:=\sA|_{\M_0}$ is uniformly strongly $\rho$-elliptic in the sense of \cite[formula~(5.1)]{Ama13b}.
It is proved in \cite[Theorem~5.2]{Ama13b} that  
\begin{equation} 
\label{S4: A0-RS}
\omega_0+\sA_0 \in \RS(L_p^{\lambda^\prime}(\M_0)),\quad 1<p<\infty
\end{equation}
for some $\omega_0 \geq 0$
with $\phi^R_{\sA_0}<\pi/2$ and 
domain $W^{2,\lambda^\prime-\lambda}_{p,\cB_i}(\M_0)$ for $i=0,1$.

(vi) 
An easy computation shows that for $j=0,1$ and any $v\in \mathcal{D}(\M)$
\begin{align}
\label{S4: commutator}
\notag \pi_j \sA v=& \sA_j (\pi_j v)  + 2\rho^{2-\lambda}\ev(\nabla v, \gd \pi_j) +  [\div(\rho^{2-\lambda}\gd \pi_j)- \ev(\nabla \pi_j, a_1)] v\\
=&:\sA_j (\pi_j v) +\cB_j v .
\end{align}
Note that 
$$\gd \pi_j \in BC^{\infty,\vartheta}(\M, T\M)$$ 
for any $\vartheta\in\R$.
Based on this observation and  Propositions~\ref{S2: pointwise multiplication}, \ref{S2: nabla}, \ref{S2: div-tangent},  and \cite[Corollaries~7.2, 12.2]{Ama13}, we infer that
\begin{align}
\label{S5.3: Besov-Lp}
\cB_j \in \L(W_{p,\cB_i}^{1+r, \lambda^\prime-\lambda(1+r)/2}(\M), L_p^{\lambda^\prime}(\M_j)), \quad i,j=0,1.
\end{align}
for any $r\in (0,1)$.
Put $\bar{\sA}:=(\sA_j)_{j=0}^1$ and
$$E_1:= \bW^{2,\lambda^\prime-\lambda}_{p,\cB_i}(\M),\quad E_0:=\boldsymbol{L}_p^{\lambda^\prime}(\M),\quad E_\gamma:=(E_0,E_1)_{\gamma,p}. $$
By the definition and convexity of $\mathscr{R}$-boundedness, see \cite[Proposition~3.8]{DenHiePru03}, it is clear that \eqref{S4: A1-RS} and \eqref{S4: A0-RS} imply
\begin{equation}
\label{S4: bA-RS}
\omega_0+\bar{\sA}\in \RS(E_0)	,\quad 1<p<\infty
\end{equation}
for some $\omega_0\geq 0$
with $\phi^R_{\omega_0+\bar{\sA}}<\pi/2$.  
By \cite[Proposition~3.4, Theorem~4.4]{DenHiePru03}, for some $\theta\in (\pi/2, \pi-\phi^R_{\omega_0+\bar{\sA}})$
$$\mathscr{R}\{(\omega_0+\bar{\sA})(\mu+\omega_0+\bar{\sA})^{-1}: \mu\in \Sigma_\theta \}<\infty .$$
By \eqref{S5.3: Besov-Lp}, we have 
$$\cB:=(\cB_j)_{j=0}^1 \in \L(W_{p,\cB_i}^{1+r, \lambda^\prime-\lambda(1+r)/2}(\M), E_0).$$
%From Proposition~\ref{S2: interpolation} and \eqref{S5.3: Besov-Lp}, we infer that
%$$\mathring{\boldsymbol{B}}^{1,\lambda^\prime-\lambda/2}_p(\M,V)\doteq (E_1,E_0)_{\frac{1+\varepsilon}{2},2}. $$
One can conclude that
$$\cB\Lambda \in \L(E_{\frac{1+r}{2}}, E_0).$$
Interpolation theory thus implies that 
for every $\varepsilon>0$ there exists some positive constant $C(\varepsilon)$ such that for all $\bu=(u_j)_{j=0}^1 \in E_1$
\begin{equation}
\label{S4: cB}
\|\cB\Lambda \bu\|_{E_0} \leq \varepsilon \|\bu\|_{E_1} + C(\varepsilon)\|\bu\|_{E_0} .
\end{equation}
It is well-known that \eqref{S4: bA-RS} implies that there is a constant $\cE>0$ such that 
$$|\omega|^{1-j} \| (\omega+\bar{\sA})^{-1}\|_{\L(E_0,E_j)}\leq \cE,\quad j=0,1,\quad \omega\geq \omega_0. $$
Together with \eqref{S4: cB}, it implies
$$\|\cB\Lambda \bu\|_{E_0} \leq \varepsilon \cE \|(\omega_0+\bar{\sA})\bu\|_{E_0} + C(\varepsilon)\|\bu\|_{E_0} .$$
By \cite[Proposition~4.3, Theorem~4.4]{DenHiePru03}, we can find some $\omega_1 \geq \omega_0$ such that 
\begin{equation}
\label{S4: bA2-RS-1}
\omega_1+\bar{\sA}+\cB\Lambda \in \RS(E_0)	,\quad 1<p<\infty
\end{equation}
with $\phi^R_{\omega_1+\bar{\sA}+\cB\Lambda}<\pi/2$, and for some $\theta>\pi/2$
\begin{equation}
\label{S4: bA2-RS-2}
\mathscr{R}\{\mu (\mu+\omega_1+\bar{\sA}+\cB\Lambda )^{-1}: \mu\in \Sigma_\theta \}<\infty .
\end{equation} 
\eqref{S4: commutator} shows that
$$\Lambda^c (\mu+\omega_1+\sA)u =(\mu+\omega_1+\bar{\sA})\Lambda^c u +\cB\Lambda\Lambda^c u = (\mu +\omega_1+\bar{\sA}+\cB\Lambda)\Lambda^c u.$$
One readily checks that
\begin{align*}
&\Lambda(\mu +\omega_1+\bar{\sA}+\cB\Lambda)^{-1}\Lambda^c (\mu+\omega_1+\sA)\\
=&\Lambda(\mu +\omega_1+\bar{\sA}+\cB\Lambda)^{-1} (\mu +\omega_1+\bar{\sA}+\cB\Lambda)\Lambda^c
=\id_{W^{2,\lambda^\prime-\lambda}_{p,\cB_i}(\M)}. 
\end{align*}

(vii) On the other hand, one can also view 
$$\cB_j\in \L(W_{p,\cB_i}^{1+r, \lambda^\prime-\lambda(1+r)/2}(\M_j),L_p^{\lambda^\prime}(\M)).$$
Let $\mathfrak{B}\bu:=\sum_{j=0}^1 \cB_j u_j$ for $\bu=(u_j)_{j=0}^1 \in E_{\frac{1+r}{2}}$.
Following a similar argument as in Step (vi), we have for some $\omega_2\geq\omega_1$
\begin{align*}
&(\mu+\omega_2+\sA)\Lambda(\mu+\omega_2+\bar{\sA}-\Lambda^c\fB)^{-1}\Lambda^c\\
=&\Lambda(\mu+\omega_2+\bar{\sA}-\Lambda^c\fB)(\mu+\omega_2+\bar{\sA}-\Lambda^c\fB)^{-1}\Lambda^c=\id_{L_p^{\lambda^\prime}(\M)} 
\end{align*}
and  for some $\theta>\pi/2$
\begin{equation}
\label{S4: bA3-RS}
\mathscr{R}\{\mu (\mu+\omega_2+\bar{\sA}-\Lambda^c\fB )^{-1}: \mu\in \Sigma_\theta \}<\infty .
\end{equation} 
Therefore, we have
$$(\mu+\omega_2+\sA)^{-1}=\Lambda(\mu+\omega_2+\bar{\sA}-\Lambda^c\fB)^{-1} \Lambda^c $$
and by \cite[Proposition~3.4]{DenHiePru03}
\begin{align*}
\mathscr{R}\{\mu (\mu+\omega_2+\sA)^{-1}: \mu\in \Sigma_\theta \}<\infty 
\end{align*}
The asserted statement now is an easy consequence of \cite[Theorem~4.4]{DenHiePru03}.
\end{proof}

We say $u,v \in C^k(\M;\R)$ are $C^k$-equivalent, which is denoted by $u\sim_k v$, if
$$u\sim v,\quad |\nabla ^i u |_g \sim | \nabla^i v|_g,\quad i=1,\cdots,k. $$

Given any compact submanifold $\Sigma\subset (\M,g)$, the distance function 
is a well-defined smooth function in a collar neighborhood $\mathscr{U}_\Sigma$ of $\Sigma$. The distance ball at $\Sigma$ with radius $r$ is defined by 
$$\B_\M(\Sigma,r):= \{\p\in \M: {\rm dist}_\M(\p,\Sigma)<r \}. $$
\begin{definition}
\label{S4.1: Torn mfd}
Suppose that $(\sM,g)$ is an $m$-dimensional uniformly regular Riemannian manifold.
\begin{itemize}
\item[(i)] $(\sM,g)$ is an $m$-dimensional uniformly regular Riemannian manifold, and $\boldsymbol{\Sigma}=\{\Sigma_j:j=1,\cdots,k\}$ is a finite set of disjoint $(m-1)$-dimensional compact submanifolds  such that $\Sigma_j\subset \mathring{\sM}$ or $\Sigma_j \subset \partial\sM$. Put $G_0:=\sM\setminus \cup_{j=1}^k \Sigma_j$ and 
$$\mathscr{B}_{j,r}:= \bar{\B}_\sM(\Sigma_j,r)\cap G_0,\quad j=1,\cdots,k.$$ 
%Set 
%$$\delta:=\min \{\}\cup\{d(U_i,\Sigma_j)/2: i\neq j  \}.$$
Furthermore, the singularity function $\rho$ satisfies 
\begin{align}
\label{S5.3: near bdry}
\rho \sim_2 {\rm dist}_{\sM}(\cdot, \Sigma_j)\quad \text{in } \mathscr{B}_{j,r} 
\end{align}
for some $r\in (0,\delta)$, where $\delta< {\rm diam}(\sM)$ and $\mathscr{B}_{i,\delta} \cap \mathscr{B}_{j,\delta}=\emptyset$ for $i\neq j$, and
$$\rho\sim {\bf 1},\quad \text{elsewhere on }G_0. $$
\item[(i')] Assume that $\boldsymbol{\Sigma}:=\{\p_j:j=1,\cdots,k\}$ is a family of points  in $\mathring{\sM}$. In addition, $G_0$ and $\rho$ are defined as in (i).
\item[(ii)] ${\bf G}=\{G_1,\cdots,G_n\}$ is a finite set of disjoint $m$-dimensional closed uniformly mild wedges.
More precisely, there is a diffeomorphism $f_i: G_i \to W(R_i,B_i,\Gamma_i)$ with $R_i\in \mathscr{C_U}(J_0)$.
Let $I_r:=(0,r]$ and
$$\mathscr{G}_{i,r}:=f_i^{-1} (\phi_P(I_r\times B_i)\times\Gamma_i),\quad i=1,\cdots,n.$$
Moreover, the singularity function $\rho$ satisfies 
\begin{align}
\label{S5.3: on wedge end}
\rho \sim_2 f_i^*(\phi_P^*(R_i|_{I_r} \otimes {\bf 1}_{B_i})\otimes {\bf 1}_{\Gamma_i}) \quad \text{in } \mathscr{G}_{j,r}
\end{align}
for some $r\in (0,1]$, and
$$\rho\sim {\bf 1},\quad \text{elsewhere on }G_i. $$
\item[(iii)] $\{G_0\}\cup {\bf G}$ forms a covering for $\M$, and $ G_0\cap G_i \subset \partial G_0 \cap \partial G_i$.
\end{itemize}
If $(\M,g;\rho)$ satisfies condition (i)-(iii), then it is called a {\em torn singular manifold  with  closed uniformly mild wedge ends}. If $(\M,g;\rho)$ satisfies (i'), (ii) and (iii), then it is called a {\em punctured singular manifold with closed uniformly mild wedge ends}.
Analogues of these two concepts were first introduced in \cite{Shao15} and then slightly modified here.
%Note that $(\M,g;\rho)$ is an $m$-dimensional singular manifold with $\sH_\lambda$-ends. 
\end{definition}
%We consider {\em singular manifolds} of the following type. 
One can show that
 \eqref{S5.3: near bdry} and \eqref{S5.3: on wedge end} imply that
\begin{align}
\label{S5.3: equiv near ends}
|\Delta \rho|<\infty \quad \text{in }\mathscr{B}_{j,r} \text{ and } \mathscr{G}_{j,r}.
\end{align}

%%%%%%%%%%%%%%%%%%%%%%%%%%%%%%%%%%%%%%%%%%%%%%%
\begin{theorem}
\label{S4: main theorem-2}
Suppose that $\lambda\in [0,1)\cup(1,\infty)$ when $(\M,g; \rho)$ is a torn $C^2$-singular manifold with closed uniformly mild wedge ends, or $\lambda\in [0,m)\cup(m,\infty)$ when $(\M,g; \rho)$ is an $m$-dimensional punctured $C^2$-singular manifold with closed uniformly mild wedge ends. 
Let $\lambda^\prime\in \R$. 
Furthermore, assume that the differential operator
\begin{align*}
\sA u:= -\div(\rho^{2-\lambda} \gd u)+\ev(\nabla u, a_1) +a_0 u
\end{align*}
is $(\rho,\lambda)_\R$-regular.
Then there exists some $\omega\geq 0$ such that $\sA^{\lambda^\prime}_{\cB_i}:=\sA|_{W^{2,\lambda^\prime -\lambda}_{p,\cB_i}(\M)}$ satisfies
$$\omega+\sA^{\lambda^\prime}_{\cB_i}\in \RS( L^{\lambda^\prime}_p(\M)),\quad 1<p<\infty,$$
with $\mathscr{R}$-angle $\phi^R_{\omega+\sA^{\lambda^\prime}_{\cB_i}}<\pi/2$.
\end{theorem}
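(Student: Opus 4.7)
The plan is to reduce Theorem~\ref{S4: main theorem-2} to Theorem~\ref{S4: main theorem} by localizing to the end pieces supplied by Definition~\ref{S4.1: Torn mfd} and patching the resulting $\mathscr{R}$-sectorial estimates via a retraction--coretraction construction in the spirit of Steps~(ii), (vi), (vii) of the proof of Theorem~\ref{S4: main theorem}. The first task is to verify, on each end piece, the geometric hypotheses of Theorem~\ref{S4: main theorem}. On a wedge end $\mathscr{G}_{i,r}$, the equivalence $\rho\sim_2 f_i^*(\phi_P^*(R_i|_{I_r}\otimes \mathbf{1}_{B_i})\otimes \mathbf{1}_{\Gamma_i})$ together with the mild-cusp conditions $\partial_t R_i\sim \mathbf{1}_{J_0}$ and $|\partial_t^2 R_i|<\infty$ from \eqref{S4: unif mild cusp ch} yields $|\gd\rho|_g\sim \mathbf{1}$ and $|\Delta\rho|<\infty$; since $R_i$ is strictly monotone, each level set $\{\rho=r_0\}\cap \mathscr{G}_{i,r}$ is (up to diffeomorphism) a single fiber $\{t_0\}\times B_i\times \Gamma_i$, which is closed and compact. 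On a torn end $\mathscr{B}_{j,r}$, $\rho\sim_2 \mathrm{dist}_\sM(\cdot,\Sigma_j)$ gives $|\gd\rho|_g\sim \mathbf{1}$ and $|\Delta\rho|<\infty$ by standard tubular-neighborhood calculus, with level sets the (compact) normal tubes around $\Sigma_j$; the punctured case is the limiting situation in which $\Sigma_j$ is a point, which accounts for the shifted admissible $\lambda$-range. In all cases Example~\ref{S2.2: Eg-mild cusp-wedge} furnishes property $\sH_\lambda$ for the stated $\lambda$.

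With these hypotheses in place, Theorem~\ref{S4: main theorem} delivers, for each end $G_i$ with $i\geq 1$, some $\omega_i\geq 0$ such that $\omega_i+\sA^{\lambda^\prime}_{\cB_i}|_{G_i}\in \RS(L_p^{\lambda^\prime}(G_i))$ with domain $W^{2,\lambda^\prime-\lambda}_{p,\cB_i}(G_i)$ and $\mathscr{R}$-angle $<\pi/2$. On the core $G_0$, $(G_0,g)$ is uniformly regular so $\rho|_{G_0}\sim \mathbf{1}$ and $\sA|_{G_0}$ is uniformly strongly $\rho$-elliptic; Amann's \cite[Theorem~5.2]{Ama13b} yields the same conclusion on $L_p^{\lambda^\prime}(G_0)$.

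To glue, I build a partition of unity $\{\pi_j^2\}_{j=0}^n$ subordinate to $\{G_0,\dots,G_n\}$ with $\pi_j\in BC^{\infty,0}(\M)$, using collar neighborhoods of each $\partial G_j\cap \partial G_0$ exactly as in Step~(ii) of Theorem~\ref{S4: main theorem}. Define
\[
\Lambda^c u := (\pi_j u)_{j=0}^n,\qquad \Lambda(u_j)_{j=0}^n := \sum_{j=0}^n \pi_j u_j;
\]
by Proposition~\ref{S2: pointwise multiplication}, $(\Lambda,\Lambda^c)$ is a retraction--coretraction pair between $W^{s,\vartheta}_{p,\cB_i}(\M)$ and $\bigoplus_{j=0}^n W^{s,\vartheta}_{p,\cB_i}(G_j)$. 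The diagonal operator $\bar\sA:=\mathrm{diag}(\sA^{\lambda^\prime}_{\cB_i}|_{G_j})_{j=0}^n$ inherits $\mathscr{R}$-sectoriality on the direct sum from its entries via convexity of $\mathscr{R}$-bounds (\cite[Proposition~3.8]{DenHiePru03}). The commutators
\[
\cB_j v = 2\rho^{2-\lambda}\ev(\nabla v,\gd \pi_j) + \bigl[\div(\rho^{2-\lambda}\gd\pi_j)-\ev(\nabla\pi_j,a_1)\bigr]v
\]
map $W^{1+r,\lambda^\prime-\lambda(1+r)/2}_{p,\cB_i}(\M)$ into $L_p^{\lambda^\prime}(\M)$ as in \eqref{S5.3: Besov-Lp}, so the interpolation inequality together with the perturbation result \cite[Proposition~4.3, Theorem~4.4]{DenHiePru03} absorbs them into the principal part at the cost of enlarging $\omega$, and delivers the resolvent identity $(\mu+\omega+\sA)^{-1} = \Lambda(\mu+\omega+\bar\sA-\Lambda^c\fB)^{-1}\Lambda^c$ exactly as in Steps~(vi)--(vii) of Theorem~\ref{S4: main theorem}. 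The main obstacle is the geometric verification on the wedge ends: one has to extract from the weak $C^2$-control in \eqref{S4: unif mild cusp ch} the pointwise bounds on $|\gd\rho|_g$ and $|\Delta\rho|$ that feed into Theorem~\ref{S4: main theorem}, and to confirm that the level sets $S_{r_0}$ remain compact there; once this is done, the subsequent gluing is a direct transcription of the earlier argument.
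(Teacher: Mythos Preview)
Your decomposition misidentifies $G_0$. In Definition~\ref{S4.1: Torn mfd}, $G_0=\sM\setminus\bigcup_j\Sigma_j$ is itself a \emph{singular} piece: the torn/punctured neighborhoods $\mathscr{B}_{j,r}$ sit inside $G_0$, and $\rho\sim_2\mathrm{dist}_\sM(\cdot,\Sigma_j)\to 0$ there. So $(G_0,g)$ is not uniformly regular and \cite[Theorem~5.2]{Ama13b} does not apply to it. Your partition must separate out the $\mathscr{B}_{j,r}$ as further singular end pieces in addition to the wedge ends $G_1,\dots,G_n$; the genuinely uniformly regular core is only $G_0\setminus\bigcup_j\mathscr{B}_{j,r}$. (Relatedly, Example~\ref{S2.2: Eg-mild cusp-wedge} treats wedge ends only and does not furnish $\sH_\lambda$ on the torn or punctured pieces.)

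More seriously, the punctured case cannot be reduced to Theorem~\ref{S4: main theorem}. Near an isolated point $\p_j$ in dimension $m$ one has $\rho\sim r$ in geodesic polar coordinates and $\Delta_{g_0}\rho=(m-1)/\rho+O(1)$, so the hypothesis $\|\Delta\rho\|_\infty<\infty$ on $\M_r$ fails outright. The construction $h=M\,\sg(1-\lambda)\log\rho$ in Step~(i) of that proof therefore does not place $h$ in $\sH_\lambda$; a direct computation on the punctured ball instead gives $\rho^\lambda\div(\rho^{2-\lambda}\gd\log\rho)=(m-\lambda)+o(1)$, which is exactly why the forbidden value shifts from $1$ to $m$. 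The paper does not route the punctured ends through Theorem~\ref{S4: main theorem}: it invokes \cite[Proposition~5.10, Corollary~5.20, Remarks~5.16, 5.21, 5.22]{Shao15}, where property~$\sH_\lambda$ and the contraction/positivity semigroup estimates are established directly on each type of end with its own $h$ function, and only then runs the retraction--coretraction gluing you describe. Your Steps~(vi)--(vii) are fine once the correct end-piece inputs are in hand, but on the punctured pieces (and for $\lambda=1$ there) those inputs must come from the cited results in \cite{Shao15}, not from Theorem~\ref{S4: main theorem}.
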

\begin{proof}
This follows from \cite[Proposition~5.10, Corollary~5.20, Remarks~5.16, 5.21, 5.22]{Shao15} and a similar argument as in Theorem~\ref{S4: main theorem}.
\end{proof}

%%%%%%%%%%%%%%%%%%%%%%%%%%%%%%%%%%%%%%%%%%%%%%%%%%%%%%%%%%%%%%%%%%%%%%%%%%%%%

%%%%%%%%%%%%%%%%%%%%%%%%%%%%%%%%%%%%%%%%%%%%%%%%%%%%%%%%%%%%%%%%%%%%%%%%%%%%%%%

\subsection{\bf  Maximal $L_p$-regularity of  second order elliptic differential operators}

In order to study nonlinear parabolic equations on {\em singular manifolds}, we will establish maximal $L_p$-regularity for differential operators of the form \eqref{S4: operator A}.
To this end, we consider the following abstract Cauchy problem 
\begin{equation}
\label{S4: Cauchy problem}
\left\{\begin{aligned}
\partial_t u(t) +\cA u(t) &=f(t), &&t\geq 0\\
u(0)&=0 . &&
\end{aligned}\right. 
\end{equation}
\begin{definition}
Assume that $X_1\overset{d}{\hookrightarrow}X_0$ is some densely embedded Banach couple.
Suppose that $\cA\in \S(X_0)$ with $dom(\cA)=X_1$.
The Cauchy problem \eqref{S4: Cauchy problem} has maximal $L_p$-regularity if for any 
$$f\in L_p(\R_+; X_0) ,$$
\eqref{S4: Cauchy problem} has a unique solution
$$u\in L_p(\R_+; X_1) \cap H^1_p(\R_+; X_0) .$$
We denote this by 
$$\cA\in \mathcal{MR}_p(X_0).$$
\end{definition}

\begin{theorem}
\label{S4.2: main thm-MR}
Let $\lambda^\prime\in \R$ and $p\neq 3/2,3$.
Suppose that a $C^2$-singular manifold $(\M,g; \rho)$  and $\lambda$  satisfy the conditions in Theorem~\ref{S4: main theorem} or \ref{S4: main theorem-2}.
Furthermore, assume that the differential operator
\begin{align*}
\sA u:= -\div(\rho^{2-\lambda} \gd u)+\ev(\nabla u, a_1) +a_0 u 
\end{align*}
is $(\rho,\lambda)_\R$-regular. 
Let $J=[0,T]$ with $T\in (0,\infty)$ and $i=0,1$.
Then for every 
$$(f,u_0) \in L_p(J; L_p^{\lambda^\prime}(\M))  \times W^{2-\frac{2}{p}, \lambda^\prime-\lambda(1-\frac{1}{p})}_{p,\cB_i}(\M),$$
and
$$
h\in L_p(J; W^{2-i-\frac{1}{p},\lambda^\prime-\lambda+i+\frac{1}{p}}_p(\partial\M))\cap W^{1-\frac{i}{2}-\frac{1}{2p}}_p(J; L_p^{ \lambda^\prime+i+\frac{1}{p}-\lambda(\frac{i}{2}+\frac{1}{2p})}(\M))
$$  
satisfying the compatibility condition
\begin{align*}
\begin{cases}
\cB_0 u_0=h(0) \text{ on }\partial\M \quad &    \text{if }i=0 \text{ and }3/2<p,\\
\cB_1 u_0 = h(0) \text{ on }\partial\M   &    \text{if }i=1 \text{ and }3<p,\\
\end{cases}
\end{align*}
the Cauchy problem  
\begin{equation}
\label{S4.2 Cauchy pb-2}
\left\{\begin{aligned}
\partial_t u(t) +\sA u(t) &=f(t)  &&\text{on}&&\M_T;\\
\cB_i u &=h &&\text{on}&&\partial\M_T;\\
u(0)&=u_0  &&\text{on}&&\M.&&
\end{aligned}\right.
\end{equation}
has a unique solution
$$u \in  L_p(J; W^{2 , \lambda^\prime-\lambda }_{p,\cB_i}(\M)) \cap H^1_p(J; L_p^{\lambda^\prime}(\M)) .$$
\end{theorem}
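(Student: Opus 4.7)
The plan is to reduce the full inhomogeneous problem to the homogeneous Cauchy problem $(u_0=0,\, h=0)$ and invoke Weis' characterization of maximal $L_p$-regularity via $\mathscr{R}$-sectoriality. First, observe that $L_p^{\lambda^\prime}(\M)$ is of class $\mathcal{HT}$ by \eqref{S4: HT}, and that Theorem~\ref{S4: main theorem} or Theorem~\ref{S4: main theorem-2} yields some $\omega\geq 0$ with
$$\omega+\sA^{\lambda^\prime}_{\cB_i}\in \RS(L^{\lambda^\prime}_p(\M))$$
of $\mathscr{R}$-angle strictly less than $\pi/2$ and with domain $W^{2,\lambda^\prime-\lambda}_{p,\cB_i}(\M)$. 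By \cite[Theorem~4.4]{DenHiePru03} this is equivalent to $\omega+\sA^{\lambda^\prime}_{\cB_i}\in \mathcal{MR}_p(L_p^{\lambda^\prime}(\M))$, and a zero-order perturbation plus a standard exponential shift in time then gives maximal $L_p$-regularity for $\sA^{\lambda^\prime}_{\cB_i}$ itself on every finite interval $J=[0,T]$. In particular, for $(f,u_0,h)=(f,0,0)$ the problem \eqref{S4.2 Cauchy pb-2} is uniquely solved in $L_p(J;W^{2,\lambda^\prime-\lambda}_{p,\cB_i}(\M))\cap H^1_p(J;L_p^{\lambda^\prime}(\M))$.

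Next I reduce to the case $u_0=0$. The natural trace space for the maximal regularity class is, by a mixed derivative theorem together with Proposition~\ref{S2: interpolation},
$$\bigl(L_p^{\lambda^\prime}(\M),\,W^{2,\lambda^\prime-\lambda}_{p,\cB_i}(\M)\bigr)_{1-1/p,p}\doteq W^{2-\frac{2}{p},\,\lambda^\prime-\lambda(1-\frac{1}{p})}_{p,\cB_i}(\M),$$
where the restriction $p\neq 3/2,3$ guarantees that $2-2/p\neq i+1/p$, so that the boundary condition $\cB_i$ survives the interpolation and matches the hypothesis on $u_0$. Given such a $u_0$, define $v(t):=e^{-t(\omega+\sA^{\lambda^\prime}_{\cB_i})}u_0$; this lies in the correct maximal regularity class and satisfies $v(0)=u_0$ together with $\cB_i v=0$ on $\partial\M\times J$. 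Replacing $u$ by $u-v$ reduces the problem to one with zero initial data at the cost of modifying $f$ within $L_p(J;L_p^{\lambda^\prime}(\M))$.

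The main technical step is reducing to $h=0$. Using Proposition~\ref{S2: interpolation}, the regularity class prescribed for $h$,
$$L_p(J;W^{2-i-\frac{1}{p},\lambda^\prime-\lambda+i+\frac{1}{p}}_p(\partial\M))\cap W^{1-\frac{i}{2}-\frac{1}{2p}}_p(J;L_p^{\lambda^\prime+i+\frac{1}{p}-\lambda(\frac{i}{2}+\frac{1}{2p})}(\M)),$$
is exactly the trace of $L_p(J;W^{2,\lambda^\prime-\lambda}_p(\M))\cap H^1_p(J;L_p^{\lambda^\prime}(\M))$ under $\cB_i$; this is where the exclusions $p\neq 3/2,3$ (ruling out critical trace exponents) and the compatibility condition at $t=0$ enter. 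Using a partition of unity adapted to the cover of $\M$ produced in Theorems~\ref{S4: main theorem}/\ref{S4: main theorem-2}, the trace-lifting problem localizes to model situations on the uniformly regular piece $\M_0$ and on the singular pieces; on $\M_0$ it is the classical half-space construction, and on a neighborhood of a singular end one transports the construction through the charts $\varphi_\kappa$, controlling weights by $\rho_\kappa$ thanks to conditions (S3)-(S4). This produces an extension $w$ in the maximal regularity class with $\cB_i w=h$; replacing $u$ by $u-v-w$ reduces \eqref{S4.2 Cauchy pb-2} to the already-solved homogeneous problem.

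The main obstacle is the construction and boundedness of the trace-lift for $h$ in the weighted anisotropic setting: the weight $\rho^{\lambda^\prime-\lambda+i+1/p}$ must be tracked carefully under the mixed space-time norms and under localization, and the compatibility condition is essential precisely to guarantee that the reduced right-hand side belongs to $L_p(J;L_p^{\lambda^\prime}(\M))$ rather than only its closure.
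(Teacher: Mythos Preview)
Your outline follows the same skeleton as the paper: invoke Theorems~\ref{S4: main theorem}/\ref{S4: main theorem-2} together with \cite[Theorem~4.4]{DenHiePru03} to get $\omega+\sA^{\lambda'}_{\cB_i}\in\mathcal{MR}_p$, use an exponential shift on the bounded interval $J$ to remove $\omega$, identify the time-trace space by Proposition~\ref{S2: interpolation}, absorb the initial datum via the semigroup, and reduce the boundary datum by a trace-lift. The only substantive difference is \emph{how} you lift $h$.

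The paper does not build the lift by localizing to charts. Instead it quotes \cite[Definition~(8.3), Theorem~20.3]{AmaAr}, which asserts directly that
\[
\cB_i\in\L\bigl(\bE_1(J),\bbF_\theta(J)\bigr),\qquad \theta=1-\tfrac{i}{2}-\tfrac{1}{2p},
\]
is a retraction with an explicit coretraction $\cB^c_i$; one then sets $u^*:=\cB^c_i(h)$ and reduces to the homogeneous boundary problem for $\hat u=u-u^*$. The identification of $\bbF_\theta(J)$ with the anisotropic space in the hypothesis on $h$ is obtained by interpolating in the two factors separately, using the proof of \cite[Theorem~21.1]{AmaAr} for the time direction. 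Since all of the weighted function-space machinery in this paper is built on \cite{Ama13, AmaAr}, this shortcut is both available and consistent with the rest of the argument. Your proposed localization via the $(\pi_0,\pi_1)$ partition and chart-by-chart half-space extensions would in principle reproduce the same result, but carrying the weights $\rho^{\lambda'-\lambda+i+1/p}$ through the mixed-norm anisotropic estimates by hand essentially redoes a nontrivial portion of \cite{AmaAr}; you correctly flag this as the main obstacle, and the paper simply sidesteps it by citation.

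One further point on the order of reductions. The paper lifts $h$ \emph{first} and only then applies the semigroup to $\hat u_0:=u_0-u^*(0)$. Your order (semigroup first, then lift $h$) works here because, as stated, $u_0\in W^{2-2/p,\,\lambda'-\lambda(1-1/p)}_{p,\cB_i}(\M)$ already has $\cB_i u_0=0$, so $e^{-t(\omega+\sA)}u_0$ lies in the maximal-regularity class with homogeneous boundary values. If one wanted the result for $u_0$ merely in $W^{2-2/p,\,\lambda'-\lambda(1-1/p)}_{p}(\M)$ together with the compatibility $\cB_i u_0=h(0)$, the paper's order would be required; your order would not place $u_0$ in the domain of the semigroup.
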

\begin{proof}
%\begin{itemize}
%\item[]{\phantom{ some  text to complete some  } }
%\end{itemize}
Theorems~\ref{S4: main theorem}, \ref{S4: main theorem-2} and \cite[Theorem~4.4]{DenHiePru03} show that, for some $\omega\geq 0$, $\sA^{\lambda^\prime}_{\cB_i}:= \sA|_{W^{2,\lambda^\prime-\lambda}_{p,\cB_i}(\M)}$ satisfies 
\begin{equation}
\label{S4.2: Aw-MR}
\omega+\sA^{\lambda^\prime}_{\cB_i}\in \mathcal{MR}_p( L_p^{\lambda^\prime}(\M)).
\end{equation}
By Proposition~\ref{S2: interpolation}, we have
$$(L_p^{\lambda^\prime}(\M), W^{2 , \lambda^\prime-\lambda}_{p,\cB_i}(\M))_{1-1/p,p}=W^{2-\frac{2}{p}, \lambda^\prime-\lambda(1-\frac{1}{p})}_{p,\cB_i}(\M).$$
We set 
$$\bE_0(J):=L_p(J; L_p^{\lambda^\prime}(\M)),$$
and
$$\bE_1(J):= L_p(J; W^{2, \lambda^\prime-\lambda}_{p,\cB_i}(\M)) \cap H^1_p(J; L_p^{\lambda^\prime}(\M)); $$
and put
$$
\bbF_0(J):=L_p(J; L_p^{\lambda^\prime-\lambda+i+1/p}(\partial\M)),
$$
and
$$
\bbF_1(J):=L_p(J; W_p^{2, \lambda^\prime-\lambda+i+1/p}(\partial\M)) \cap H^1_p(J; L_p^{\lambda^\prime +i+1/p}(\partial\M)),
$$
and 
$$
\bE_\theta(J)=(\bE_0(J),\bE_1(J))_{\theta,p},\quad
\bbF_\theta(J)=(\bbF_0(J),\bbF_1(J))_{\theta,p}.
$$
\cite[Definition~(8.3), Theorem~20.3]{AmaAr} show that for $\theta=1-\frac{i}{2}-\frac{1}{2p}$
$$
\cB_i\in \L(\bE_1(J), \bbF_\theta(J))
$$
is a retraction and there exists a coretraction $\cB^c_i\in \L(\bbF_\theta(J), \bE_1(J))$ such that 
$
\cB_i \circ \cB^c_i =\id.
$
Note that in \cite{AmaAr} the author uses the following notations:
$$
\bE_1(J)=W^{2/\vec{2}, (\lambda^\prime-\lambda, \lambda)}_{p.\cB_i}(\M)
$$
and
$$
\bbF_1(J)=W^{0/\vec{2}, (\lambda^\prime-\lambda+i+1/p, \lambda)}_{p.\cB_i}(\partial\M),\quad \bbF_1(J)=W^{2/\vec{2}, (\lambda^\prime-\lambda+i+1/p, \lambda)}_{p.\cB_i}(\partial\M).
$$
To obtain a precise characterization for  $\bbF_\theta(J)$, we apply  interpolation theory and infer that
\begin{align*}
&(L_p(J;L_p^{\lambda^\prime-\lambda+i+1/p}(\partial\M)),L_p(J;W_p^{2, \lambda^\prime-\lambda+i+1/p}(\partial\M)) )_{\theta,p}\\
&\doteq L_p(J; W_p^{2\theta, \lambda^\prime-\lambda+i+1/p}(\partial\M)).
\end{align*} 
On the other hand, following the proof for \cite[Theorem~21.1]{AmaAr}, we can show that
\begin{align*}
&(L_p(J;L_p^{\lambda^\prime-\lambda+i+1/p}(\partial\M)),H^1_p(J;L_p^{ \lambda^\prime+i+1/p}(\partial\M)) )_{\theta,p}\\
&\doteq W^\theta_p(J; L_p^{ \lambda^\prime+i+\frac{1}{p}-\lambda(1-\theta)}(\partial\M)).
\end{align*} 
Therefore,
$$
\bbF_\theta(J)\doteq
L_p(J; W_p^{2\theta, \lambda^\prime-\lambda+i+\frac{1}{p}}(\partial\M))\cap W^\theta_p(J; L_p^{ \lambda^\prime+i+\frac{1}{p}-\lambda(1-\theta)}(\partial\M))
$$
is the boundary trace space for $\bE_1(J)$. Let $u^*=\cB^c_i (h)$. Then $u=u^*+\hat{u}$ solves \eqref{S4.2 Cauchy pb-2}, where $\hat{u}$ is a solution to
\begin{equation}
\label{S4.2 Cauchy pb-3}
\left\{\begin{aligned}
\partial_t u +\sA u &=f-\partial_t u^* -\sA u^*=:f^*  &&\text{on}&&\M_T;\\
\cB_i u &=0 &&\text{on}&&\partial\M_T;\\
u(0)&=u_0 - u^*(0)=:\hat{u}_0  &&\text{on}&&\M.&&
\end{aligned}\right.
\end{equation}
We will show \eqref{S4.2 Cauchy pb-3} has a unique solution in $\bE_1(J)$.

Given  Banach spaces $E$ and $\mathbb{E}$ such that 
$\mathbb{E}\hookrightarrow L_{1,loc}(J;E)$, define
$$e^{\omega t}\mathbb{E}:=\{u\in L_{1,loc}(J;E): e^{-\omega t}u\in \mathbb{E}\}, $$
equipped with the norm
$$\|u\|_{e^{\omega t}\bE}:= \| e^{-\omega t} u\|_{\bE}.$$
The boundedness of $J$ gives  
\begin{equation}
\label{S4.2: equi}
e^{\omega t}\bE_j(J)\doteq \bE_j(J),\quad j=0,1.
\end{equation}
One readily checks that $w(t)=e^{-t\sA^{\lambda^\prime}_{\cB_i}}\hat{u}_0$ solves
$$\partial_t u(t) +\sA u(t) =0,\quad \cB_i u=0  \quad u(0)=\hat{u}_0.$$
\cite[formula~(III.1.5.6), Proposition~4.10.3]{Ama95} and \eqref{S4.2: equi} imply that
$ w\in \bE_1(J).$

Hence it only remains to prove  
that for any $f \in \bE_0(J)$, the Cauchy problem
$$\partial_t v(t) +(\omega+\sA) v(t)  = e^{-\omega t}f(t),  \quad v(0)=0$$
has a unique solution $v\in \bE_1(J)$, or equivalently,
$$u=e^{\omega t}v \in e^{\omega t}\bE_1(J) $$
uniquely solves \eqref{S4.2 Cauchy pb-3} with $\hat{u}_0=0$.
This is an immediate consequence of \eqref{S4.2: Aw-MR}.

Let $\gamma: \bE_1(J) \to E:=W^{2-\frac{2}{p}, \lambda^\prime-\lambda(1-\frac{1}{p})}_{p,\cB_i}(\M): u\mapsto u(0)$. Then
\begin{equation}
\label{S4: spatial trace}
\gamma\in \L(\bE_1(J),E).
\end{equation}
By the maximal regularity property, cf. \cite[Formula~(1.2)]{Pru03} and \eqref{S4: spatial trace}, we have
\begin{align*}
\|u\|_{\bE_1(J)} &\leq \|u^*\|_{\bE_1(J)} + \|\hat{u}\|_{\bE_1(J)} \\
&\leq C(\|h\|_{\bbF_\theta(J)} + \| f^* \|_{\bE_0(J)} + \|\hat{u}_0\|_E)\\
&\leq  C(\|h\|_{\bbF_\theta(J)} + \| f\|_{\bE_0(J)} + \|\partial_t u^* +\sA u^* \|_{\bE_0(J)} + \|u_0\|_E + \|u^*(0)\|_E)\\
&\leq C(\|h\|_{\bbF_\theta(J)} + \| f\|_{\bE_0(J)} +\|u_0\|_E).
\end{align*}
This establishes the uniqueness of the solution $u=u^*+\hat{u}$.
\end{proof}

\begin{cor}
\label{S4.2: main cor-MR}
Under the same conditions as in Theorem~\ref{S4.2: main thm-MR}, 
if
$(f,h,u_0)$  is real valued, so is the solution $u$.
\end{cor}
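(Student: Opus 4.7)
The plan is to invoke the standard conjugation argument based on the reality of the coefficients of $\sA$ and $\cB_i$, combined with the uniqueness part of Theorem~\ref{S4.2: main thm-MR}.

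First, I would observe that since $\sA$ is $(\rho,\lambda)_\R$-regular, the coefficients $a_1$ and $a_0$ are real, as is $\rho$. The Levi-Civita connection $\nabla$, the gradient $\gd$, the divergence $\div$, and the contraction $\ev(\cdot,\cdot)$ all preserve the real subbundle when applied to real sections. Consequently, for any smooth compactly supported section $v$ one has the pointwise identity
\begin{equation*}
\sA \bar v \;=\; \overline{\sA v},
\end{equation*}
and by the continuity of $\sA: W^{2,\lambda'-\lambda}_{p,\cB_i}(\M)\to L_p^{\lambda'}(\M)$ together with density of $\mathcal{D}(\mathring{\M})$, this relation extends to the entire domain. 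The same identity holds for the boundary operator $\cB_i$, since $\gamma$ and $\ev(\nabla\cdot,\boldsymbol{n})$ are real.

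Next I would verify that the function spaces $\bE_1(J)$, $\bE_0(J)$ and $\bbF_\theta(J)$ appearing in Theorem~\ref{S4.2: main thm-MR} are stable under complex conjugation. This is immediate: each such space is the complexification of its real analogue (this is exactly how weighted Sobolev spaces were defined in Section~3), so $u\mapsto \bar u$ is an isometric involution on each of them. In particular, if $u\in \bE_1(J)$ solves \eqref{S4.2 Cauchy pb-2} with data $(f,h,u_0)$, then $\bar u\in \bE_1(J)$ solves the same Cauchy problem with data $(\bar f,\bar h,\bar u_0)$.

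Now let $u$ be the unique solution of \eqref{S4.2 Cauchy pb-2} produced by Theorem~\ref{S4.2: main thm-MR} for the real data $(f,h,u_0)$. Taking complex conjugates throughout and using the two steps above, $\bar u$ is also a solution in $\bE_1(J)$ of the Cauchy problem with the same data $(f,h,u_0)=(\bar f,\bar h,\bar u_0)$. Uniqueness in Theorem~\ref{S4.2: main thm-MR} then forces $\bar u=u$, i.e.\ $u$ is real valued, which is the claim.

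The only point requiring any care is the density argument extending $\sA\bar v=\overline{\sA v}$ from $\mathcal{D}(\mathring{\M})$ to $W^{2,\lambda'-\lambda}_{p,\cB_i}(\M)$ along with the verification that conjugation respects the boundary condition $\cB_i u=0$ (so that $\bar u$ lies in the correct closed subspace); both follow directly from the closedness of the real subspace inside its complexification and the continuity of $\cB_i$ on the appropriate trace space. No new estimates are needed beyond what Theorem~\ref{S4.2: main thm-MR} already provides.
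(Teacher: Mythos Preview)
Your argument is correct, but it follows a different route from the paper's. The paper does not use the conjugation/uniqueness trick; instead it goes back to the decomposition $u=u^*+\hat u$ constructed in the proof of Theorem~\ref{S4.2: main thm-MR}, observes that the coretraction $\cB^c_i$ can be taken to map real boundary data $h$ to a real lift $u^*$, and then invokes the reality of the semigroup $\{e^{-t\sA}\}_{t\ge 0}$ (established in Step~(iii) of the proof of Theorem~\ref{S4: main theorem}) together with the variation of parameters formula $\hat u(t)=e^{-t\sA}\hat u_0+\int_0^t e^{-(t-s)\sA}f^*(s)\,ds$ to conclude that $\hat u$ is real.

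Your approach has the advantage of being self-contained: it needs neither the reality of the semigroup nor any real-preserving property of the coretraction, only the uniqueness statement already contained in Theorem~\ref{S4.2: main thm-MR} and the elementary commutation $\sA\bar v=\overline{\sA v}$, $\cB_i\bar v=\overline{\cB_i v}$. The paper's approach, on the other hand, ties the corollary more directly to the structural facts proved along the way and makes explicit that the solution operator itself is real. One minor remark: your density step is unnecessary, since the identity $\sA\bar v=\overline{\sA v}$ holds pointwise a.e.\ for every $v\in W^{2,\lambda'-\lambda}_p(\M)$ directly from the reality of the coefficients, without any approximation.
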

\begin{proof}
If $h$ is real, then $u^*=\cB^c_i (h)$ is real.
Since the semigroup $\{e^{-t\sA}\}_{t\geq 0}$ is real on $L_p^{\lambda^\prime}(\M)$,  the variation of parameters formula for the solution $\hat{u}$ to \eqref{S4.2 Cauchy pb-3}, i.e.,
$$\hat{u}(t)= e^{-t\sA}(u_0-u^*(0)) +\int_0^t e^{-(t-s)\sA} f^*(s)\, dt, \quad t\geq 0$$
immediately gives the desired assertion.
\end{proof}

%%%%%%%%%%%%%%%%%%%%%%%%%%%%%%%%%%%%%%%%%%%%%%%%%%%%%%%%%%%%%%%

\section{\bf The Yamabe flow}

Suppose that $(\M,g_0;\rho)$ is a $C^2$-singular manifold of dimension $m$ for $m\geq 3$. The Yamabe flow reads as
\begin{equation}
\label{S5: Yamabe flow eq1}
\left\{\begin{aligned}
\partial_t g&=-R_g g  &&\text{on}&&\M_T;\\
H_g&=0  &&\text{on}&&\partial\M_T;\\
g(0)&=g_0  &&\text{on}&&\M,&&
\end{aligned}\right.
\end{equation}
where $R_g$ is the scalar curvature with respect to the metric $g$, and $H_g$ is the mean curvature of $g_{\partial\M}$. The evolving metric $g$ is in the conformal class of the  background metric $g_0$, i.e., $[g_0]$. Note that a {\em singular manifold} must be incomplete. Therefore, this flow actually starts with an incomplete metric.

\subsection{\bf Wellposedness of the Yamabe flow on singular manifolds}

In this subsection, we apply Theorem~\ref{S4.2: main thm-MR} and Corollary~\ref{S4.2: main cor-MR} to establish the general theory for existence and uniqueness of solutions to the Yamabe flow \eqref{S5: Yamabe flow eq1} on {\em singular manifolds} satisfying conditions in Theorem~\ref{S4: main theorem} or \ref{S4: main theorem-2}

Let $c(m):=\frac{m-2}{4(m-1)}$. The conformal Laplacian operator $L_g$ with respect to the metric $g$ is defined  by
$$
L_g u:=\Delta_g u -c(m)R_g u.
$$
Here $\Delta_g $ is the Laplace-Beltrami operator with respect to $g$ defined by 
$$\Delta_g:=\div_g\circ\gd_g. $$
Let $g=u^{\frac{4}{m-2}}g_0$ for $u>0$. By rescaling the time variable, equation \eqref{S5: Yamabe flow eq1} is equivalent to 
\begin{equation*}
\left\{\begin{aligned}
\partial_t u^{\frac{m+2}{m-2}}&=\frac{m+2}{m-2}L_{g_0} u  &&\text{on}&&\M_T; \\
\ev(\nabla u, \boldsymbol{n}_0)&=0  &&\text{on}&&\partial\M_T; \\
u(0)&={\bf 1}_\M  &&\text{on}&&\M,&&
\end{aligned}\right.
\end{equation*}
where $\boldsymbol{n}_0$ is the inward pointing normal of $g_0$ on $\partial\M$, cf.  \cite{Brend02}. It is equivalent to solving the following equation:
\begin{equation}
\label{S5: Yamabe flow eq2}
\left\{\begin{aligned}
\partial_t u &=u^{-\frac{4}{m-2}}L_{g_0} u  &&\text{on}&&\M_T; \\
\ev(\nabla u, \boldsymbol{n}_0)&=0  &&\text{on}&&\partial\M_T; \\
u(0)&={\bf 1}_\M  &&\text{on}&&\M. &&
\end{aligned}\right.
\end{equation}
Put 
$$P(u)v:=u^{-\frac{4}{m-2}}\Delta_{g_0} v ,\quad Q(u):=-c(m) u^{\frac{m-6}{m-2}}R_{g_0} .$$
%%%%%%%%%%%%%%%%%%%%%%%%%%%%%%%%%%%%%%%%%%%%%%%%%%%%%
We set
$$E_0:= L_p^{2-\frac{m+2}{p}}(\M;\R),\quad E_1:=W^{2, -\frac{m+2}{p}}_{p,\cB_1}(\M;\R).$$
Then it follows from Proposition~\ref{S2: interpolation} that
$$E_{1-1/p}:=(E_0,E_1)_{1-1/p,p}= W^{2-\frac{2}{p}, -\frac{m}{p}}_{p,\cB_1}(\M;\R).$$
By Theorem~\ref{S4.2: main thm-MR} and Corollary~\ref{S4.2: main cor-MR}, the operator
\begin{equation}
\label{S5.1: YF-MR}
P(u_0)=\Delta_{g_0} \in \mathcal{MR}_p( E_0 ),\quad 1<p<\infty
\end{equation}
with domain $E_1$. 
We assume that
\begin{equation*}
p>m+2.
\end{equation*}
Proposition~\ref{S2: Sobolev embedding} then implies
$$E_{1-1/p} \hookrightarrow BC^{1,0}(\M;\R). $$

We first look at the linear problem
\begin{equation}
\label{S5: Yamabe flow-linear}
\left\{\begin{aligned}
\partial_t u &=\Delta_{g_0} u  &&\text{on}&&\M_T; \\
\ev(\nabla u, \boldsymbol{n}_0)&=0  &&\text{on}&&\partial\M_T; \\
u(0)&={\bf 1}_\M  &&\text{on}&&\M. &&
\end{aligned}\right.
\end{equation}
%By Theorem~\ref{S4.2: main thm-MR} and Corollary~\ref{S4.2: main cor-MR}, \eqref{S5: Yamabe flow-linear} has a unique solution $u^*\in \bE_1(J):=L_p(J; E_1) \cap H^1_p(J; E_0).$ for $J=[0,T]$ with any $T>0$, i.e.,
It is clear that $u^*\equiv {\bf 1}_\M$ solves \eqref{S5: Yamabe flow-linear}.

Then we look at the nonlinear problem
\begin{equation}
\label{S5: Yamabe flow-nonlinear}
\left\{\begin{aligned}
\partial_t u -P(u+u^*)u &=Q(u+u^*)  &&\text{on}&&\M_T; \\
\ev(\nabla u, \boldsymbol{n}_0)&=0  &&\text{on}&&\partial\M_T; \\
u(0)&={\bf 0}_\M  &&\text{on}&&\M. &&
\end{aligned}\right.
\end{equation}

We take 
$$\hat{U}:=\B_R({\bf 0}_\M):=\{v\in BC^{1,0}(\M;\R): \|v\|_{1,\infty;0}<R\}$$ 
to be an open ball around ${\bf 0}_\M$ in $BC^{1,0}(\M;\R)$ with $R<1$, and set $U:=i^{-1}(\hat{U})$, where 
$i$ is the embedding of  $E_{1-1/p}$ into $BC^{1,0}(\M;\R)$. Then $U$ is open in $E_{1-1/p}$  and 
$$\inf(v+{\bf 1}_\M)\geq 1-R>0,\quad v\in U .$$
On the $C^2$-{\em uniformly regular Riemannian manifold} $(\M,\hat{g}):=(\M,g/\rho^2;{\bf 1}_\M)$,
we can define unweighted $BC^k$-spaces as in Section~2.2 with respect to the metric $\hat{g}$ and by taking $\rho={\bf 1}_\M$. We denote these spaces by $BC^k(\hat{\M})$. 
By \cite[formula~(4.4)]{Ama13b}, it holds that
$$ BC^k(\hat{\M})\doteq BC^{k,0}(\M)$$
An observation into the proof for \cite[Proposition~6.3]{ShaoSim13} shows that this proposition remains true for $BC^k(\hat{\M};\R)$. Therefore, it holds for any $\alpha\in\R$ that
\begin{equation*}
[v\mapsto v^\alpha] \in C^\omega(\hat{U} + {\bf 1}_\M, BC^{1,0}(\M;\R)),
\end{equation*}
where $\omega$ is the symbol of real analyticity, and 
$$\hat{U} + {\bf 1}_\M := \{u \in L_{1,loc}(\M;\R): u=u_1+{\bf 1}_\M \quad \text{with } u_1\in \hat{U}; \, \|\cdot\|_{1,\infty;0}\}$$
is open in $BC^{1,0}(\M;\R)$. 
Moreover, $[v\mapsto (v+{\bf 1}_\M)]\in C^\omega(U, \hat{U} + {\bf 1}_\M)$. Therefore,
\begin{equation}
\label{S5.1: Nemyskij}
[v\mapsto (v+{\bf 1}_\M)^\alpha] \in C^\omega(U, BC^{1,0}(\M;\R)),
\end{equation}
In view of Propositions~\ref{S2: nabla} and \ref{S2: div-tangent}, we have
$$\Delta_{g_0} \in \L(E_1, E_0). $$
Now one infers from Proposition~\ref{S2: pointwise multiplication} and \eqref{S5.1: Nemyskij} that
\begin{equation}
\label{S5.1: reg of P}
[v\mapsto P(v +u^*)] \in C^\omega(U, \L(E_1,E_0))
\end{equation}

Our most important assumption for the Yamabe flow can be stated as follows.
\begin{itemize}
\item[(YF)] There exists some $\varepsilon\in (\frac{2}{p},\infty)$ such that
\begin{align}
\label{S5.1: ASP}
\rho^{\varepsilon-\frac{m+2}{p}}\in L_p(\M;\R),\quad R_{g_0}\in BC^{\infty, 2-\varepsilon}(\M;\R). 
\end{align}
\end{itemize}
We would like to comment on the second entry of \eqref{S5.1: ASP}. This condition is not optimal. Indeed, in \cite[formula~(4.13)]{Shao14}, it is shown that
$$R_{g_0}\in BC^{\infty, 2}(\M;\R).$$
So (YF) actually asks $R_{g_0}$ to be less singular.
However, we will show in Section~5.2 that (YF) is satisfied by plenty of examples of {\em singular manifolds}.

As a consequence of \eqref{S5.1: Nemyskij} and (YF), we conclude from Proposition~\ref{S2: pointwise multiplication} that
$$[v\mapsto Q(v +u^*)] \in C^\omega(U, BC^{1,2-\varepsilon}(\M;\R))$$
and for any $v\in BC^{1,2-\varepsilon}(\M;\R)$
\begin{align*}
\| v\|_{p; 2-\frac{m+2}{p}}  \leq \|v\|_{\infty; 2-\varepsilon} \| \rho^{\varepsilon-\frac{m+2}{p}}\|_p \leq C \|v\|_{\infty; 2-\varepsilon}.
\end{align*}
Combining the above two results, we infer that
\begin{equation}
\label{S5.1: reg of Q}
[v\mapsto Q(v +u^*)] \in C^\omega(U, E_0).
\end{equation}
\cite[Theorem~2.1]{CleLi93}, \eqref{S5.1: YF-MR}, \eqref{S5.1: reg of P} and \eqref{S5.1: reg of Q} imply that there exists a unique solution 
$$\tilde{u}\in \bE_1(J):= L_p(J; E_1) \cap H^1_p(J; E_0) $$
solving \eqref{S5: Yamabe flow-nonlinear}. 
Now it is clear that $\hat{u}:=\tilde{u} +u^*$ solves \eqref{S5: Yamabe flow eq2}. We will show that $\hat{u}$ is indeed the unique solution in the class $\bE_1(J)\oplus \R_\M$, where 
$\R_\M$ is the space of all real-valued constant functions on $\M$. First we have 
$$\bE_1(J) \hookrightarrow C(J; E_{1-1/p}), $$
and $\R_\M \cap E_{1-1/p} = \{{\bf 0}_\M\}$. If $u\in \bE_1(J)\oplus \R_\M$  solves \eqref{S5: Yamabe flow eq2}, then we have a unique decomposition of the solution $u=u_1+u_2$ with $u_1\in \bE_1(J)$ and $u_2\in \R_\M$. Since $u(0)=u_1(0) + u_2= {\bf 1}_\M$, we infer that $u_2={\bf 1}_\M$. Thus $u_1\in \bE_1(J)$ is a solution to \eqref{S5: Yamabe flow-nonlinear} by our construction, and the uniqueness follows immediately.

Now we are in a position to state the following well-posedness theorem for the Yamabe flow.
\begin{theorem}
\label{S5.1: YF-thm}
Suppose that $(\M,g_0;\rho)$ is a $C^2$-singular manifold satisfying conditions in Theorem~\ref{S4: main theorem} or \ref{S4: main theorem-2}, and $p>m+2$. 
We assume that condition (YF) is fulfilled. 
Then the Yamabe flow~\eqref{S5: Yamabe flow eq2} has a unique positive solution
$$u \in  L_p(J; W^{2, -\frac{m+2}{p}}_{p,\cB_1}(\M,\R)) \cap H^1_p(J; L_p^{2-\frac{m+2}{p}}(\M;\R))\oplus \R_\M . $$
for some $J:=[0,T]$ with $T>0$. Moreover,
$$u\in C(J; W^{2-\frac{2}{p}, -\frac{m}{p}}_{p,\cB_1}(\M;\R))\oplus \R_\M. $$
\end{theorem}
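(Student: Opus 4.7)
The plan is to reduce \eqref{S5: Yamabe flow eq1} to the scalar quasilinear Neumann problem \eqref{S5: Yamabe flow eq2} for the conformal factor $u$, split off the trivial solution $u^* \equiv \mathbf{1}_\M$ of the linearized problem \eqref{S5: Yamabe flow-linear}, and then solve the fully nonlinear equation \eqref{S5: Yamabe flow-nonlinear} for the perturbation $\tilde u = u - u^*$ via an abstract quasilinear maximal regularity theorem (e.g. \cite[Theorem~2.1]{CleLi93}) applied in the pair $(E_0, E_1)$ with $E_0 = L_p^{2-\frac{m+2}{p}}(\M;\R)$ and $E_1 = W^{2,-\frac{m+2}{p}}_{p,\cB_1}(\M;\R)$. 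The central input is that the frozen-coefficient operator $P(u^*) = \Delta_{g_0}$ belongs to $\mathcal{MR}_p(E_0)$ with domain $E_1$, which is exactly \eqref{S5.1: YF-MR} and comes directly from Theorem~\ref{S4.2: main thm-MR} (applicable thanks to the standing hypothesis that $(\M,g_0;\rho)$ satisfies the conditions of Theorem~\ref{S4: main theorem} or \ref{S4: main theorem-2}).

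Next I would verify the two nonlinear mappings appearing in \eqref{S5: Yamabe flow-nonlinear}. The Sobolev embedding (Proposition~\ref{S2: Sobolev embedding}) together with $p > m+2$ gives $E_{1-1/p} \hookrightarrow BC^{1,0}(\M;\R)$, so shrinking a ball around ${\bf 0}_\M$ in $E_{1-1/p}$ keeps $v + \mathbf{1}_\M$ bounded away from zero uniformly, and the Nemytskij-type statement \eqref{S5.1: Nemyskij} yields analyticity of $v \mapsto (v + \mathbf{1}_\M)^\alpha$ into $BC^{1,0}(\M;\R)$. Combining with the pointwise multiplier theorem (Proposition~\ref{S2: pointwise multiplication}) and the fact $\Delta_{g_0} \in \L(E_1,E_0)$ obtained from Propositions~\ref{S2: nabla} and \ref{S2: div-tangent}, I get the analytic dependence \eqref{S5.1: reg of P} of $P(v+u^*)$, which realizes the quasilinear structure.

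The delicate term is the zeroth-order inhomogeneity $Q(v+u^*) = -c(m)(v+\mathbf{1}_\M)^{(m-6)/(m-2)}R_{g_0}$, because $R_{g_0}$ is generically unbounded. This is precisely where hypothesis (YF) enters: condition \eqref{S5.1: ASP} ensures $R_{g_0} \in BC^{\infty, 2-\varepsilon}(\M;\R)$ for some $\varepsilon > 2/p$, so the H\"older inequality together with $\rho^{\varepsilon - \frac{m+2}{p}} \in L_p(\M;\R)$ gives $BC^{1,2-\varepsilon}(\M;\R) \hookrightarrow L_p^{2-\frac{m+2}{p}}(\M;\R) = E_0$, which yields \eqref{S5.1: reg of Q}. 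This is the only step where I expect genuine subtlety; the remainder is an application of the abstract theorem producing a unique $\tilde u \in \bE_1(J) = L_p(J;E_1) \cap H^1_p(J;E_0)$ with $\tilde u(0) = {\bf 0}_\M$, and hence $u = u^* + \tilde u$ solving \eqref{S5: Yamabe flow eq2}.

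Positivity of $u$ on a possibly shorter time interval is automatic from the continuous embedding $\bE_1(J) \hookrightarrow C(J; E_{1-1/p}) \hookrightarrow C(J; BC^{1,0}(\M;\R))$, the initial value $u(0) = \mathbf{1}_\M$, and continuity in time. For uniqueness in the ambient class $\bE_1(J) \oplus \R_\M$, I would use that $\R_\M \cap E_{1-1/p} = \{{\bf 0}_\M\}$: any solution $u = u_1 + u_2$ with $u_1 \in \bE_1(J)$, $u_2 \in \R_\M$ must satisfy $u_1(0) + u_2 = \mathbf{1}_\M$, forcing $u_2 = \mathbf{1}_\M$ and reducing the problem to the uniqueness already established for \eqref{S5: Yamabe flow-nonlinear}. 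Finally, the trace regularity $u \in C(J; W^{2-\frac{2}{p},-\frac{m}{p}}_{p,\cB_1}(\M;\R)) \oplus \R_\M$ follows from the standard temporal trace $\bE_1(J) \hookrightarrow C(J; E_{1-1/p})$ together with Proposition~\ref{S2: interpolation}.
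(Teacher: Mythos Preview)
Your proposal is correct and follows essentially the same line of argument as the paper's own proof: you set up the same pair $(E_0,E_1)$, invoke Theorem~\ref{S4.2: main thm-MR} for \eqref{S5.1: YF-MR}, verify \eqref{S5.1: reg of P} and \eqref{S5.1: reg of Q} via the same embeddings and condition~(YF), and then apply \cite[Theorem~2.1]{CleLi93} together with the decomposition argument based on $\R_\M \cap E_{1-1/p}=\{\mathbf{0}_\M\}$. There is no substantive deviation from the paper's approach.
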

The local positivity of the solution $u$ in the above theorem follows from the fact that $u\in C(J; W^{2-\frac{2}{p}, -\frac{m}{p}}_p(\M;\R))\oplus {\bf 1}_\M $ and $W^{2-\frac{2}{p}, -\frac{m}{p}}_p(\M;\R)\hookrightarrow BC^{1,0}(\M;\R)$.
%\begin{remark}
%From the proof for Theorem~\ref{S5.1: YF-thm}, it is easy to see that the Yamabe flow~\eqref{S5: Yamabe flow eq1} can start with $u_0={\bf 1}_\M +w_0$ with $w_0\in W^{2-\frac{2}{p}, -\frac{m}{p}}_p(\M;\R)$ and $\|w_0\|_{W^{2-\frac{2}{p}, -\frac{m}{p}}_p}\leq \epsilon$ for some sufficiently small $\epsilon$.
%\end{remark}
\begin{remark}
We observe that our solution to the Yamabe flow~\eqref{S5: Yamabe flow eq1} remains in the set $U \oplus {\bf 1}_\M$ at least for a short period of time $J=[0,T]$. For any fixed $t_0\in J$, we have 
$$0<\inf u(t_0)\leq \sup u(t_0) <\infty.$$
This means that $(\M,g(t);\rho)$ is a {\em singular manifold} for all $t\in J$. So the unique solution obtained in Theorem~\ref{S5.1: YF-thm} preserves the type of singularities and is not instantaneously complete. This has the same feature of the solution obtained in \cite{BahVer14} for compact manifold with an incomplete edge singularity.
\end{remark}

%%%%%%%%%%%%%%%%%%%%%%%%%%%%%%%%%%%%%%%%%%%%%%%%%%%%%%%%%%%%%%%

\subsection{\bf Examples of the Yamabe flow starting on incomplete manifolds}

\begin{prop}
\label{S5.2: YF-prop}
Suppose that $(\M,g_0;\rho)$ is a torn or punctured $C^2$-singular manifold  with ${V\!ol}_{g_0}(\M)<\infty$. Then condition (YF) is satisfied for all $\varepsilon\in (\frac{m+1}{p},2]$. 
\end{prop}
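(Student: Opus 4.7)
The plan is to verify the two requirements of (YF) separately on each of the three kinds of regions appearing in Definition~\ref{S4.1: Torn mfd}: the essentially uniformly regular interior $G_0$, the collar neighborhoods of the singular submanifolds $\Sigma_j$ (or points $\p_j$), and each model wedge end $G_i\cong W(R_i,B_i,\Gamma_i)$. Throughout I would use that, by hypothesis, $\mathrm{Vol}_{g_0}(\M)<\infty$, that $\rho\leq{\bf 1}_\M$, and the $\sim_2$-equivalences \eqref{S5.3: near bdry} and \eqref{S5.3: on wedge end}, which allow one to replace $\rho$ by the model distance or cusp characteristic at the cost of only multiplicative constants.

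For the first condition, $\rho^{\varepsilon-(m+2)/p}\in L_p(\M;\R)$, the contribution from the region where $\rho\sim{\bf 1}$ is dominated by $\mathrm{Vol}_{g_0}(\M)$. Near a codimension-one submanifold $\Sigma_j$ in the torn case, Fermi coordinates supply $d\mu_{g_0}\sim dt\wedge d\mu_{g_{\Sigma_j}}$ with $\rho\sim t$, so the local contribution is comparable to $\mathrm{Vol}_{g_{\Sigma_j}}(\Sigma_j)\int_0^{r} t^{p\varepsilon-(m+2)}\,dt$, finite iff $\varepsilon>(m+1)/p$. Near an isolated point $\p_j$ in the punctured case, geodesic polar coordinates give $d\mu_{g_0}\sim r^{m-1}\,dr\,d\sigma$ and $\rho\sim r$, reducing the integral to $\int_0^r t^{p\varepsilon-3}\,dt$; this requires only $\varepsilon>2/p$, which is implied by $\varepsilon>(m+1)/p$ since $m\geq 3$. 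On each wedge end, the product structure yields $d\mu_{g_0}=dt\wedge d\mu_{g_{B_i}}\wedge d\mu_{g_{\Gamma_i}}$, while conditions (ii) and (iv) of a uniformly mild cusp characteristic force $R_i(0)=0$ and $R_i(t)\sim t$; the integral then factors as $\mathrm{Vol}_{g_{B_i}}(B_i)\mathrm{Vol}_{g_{\Gamma_i}}(\Gamma_i)\int_0^{1} t^{p\varepsilon-(m+2)}\,dt$, whose first factor is finite by the finite-volume hypothesis and whose second factor again requires $\varepsilon>(m+1)/p$.

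For the second condition, $R_{g_0}\in BC^{\infty,2-\varepsilon}(\M;\R)$, I plan to first establish the stronger statement $R_{g_0}\in BC^{\infty,0}(\M;\R)$. On $G_0$, the metric $g_0$ extends smoothly across the (removed) singular loci to the compact ambient manifold $\sM$, so $R_{g_0}$ together with all its covariant derivatives is uniformly bounded by compactness. On each wedge $G_i$ the product structure $g_0=dt^2+g_{B_i}+g_{\Gamma_i}$ gives $R_{g_0}=R_{g_{B_i}}+R_{g_{\Gamma_i}}$ with no $t$-dependence, so uniform bounds on every $\nabla^k R_{g_0}$ follow from the uniform regularity of $(B_i,g_{B_i})$ and the compactness of $(\Gamma_i,g_{\Gamma_i})$. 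Combining these pieces gives $R_{g_0}\in BC^{\infty,0}(\M;\R)$, and Proposition~\ref{S2: embedding} (applied with $\rho\leq{\bf 1}_\M$ and $0\leq 2-\varepsilon$) then yields the embedding $BC^{\infty,0}(\M;\R)\hookrightarrow BC^{\infty,2-\varepsilon}(\M;\R)$ for every $\varepsilon\leq 2$.

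The main obstacle I anticipate is bookkeeping rather than conceptual: checking that the local coordinate computations (Fermi, polar, product) are globally compatible with the $\sim_2$ equivalences so that replacing $\rho$ by a model function contributes only uniform multiplicative constants, both in the integral and in the curvature bound. Since only finitely many singular pieces occur and each $\Sigma_j$ and $\Gamma_i$ is compact with $B_i$ uniformly regular, these equivalences can be made uniform by a standard finite covering argument, after which the thresholds $\varepsilon>(m+1)/p$ and $\varepsilon\leq 2$ emerge from the local integrals and embeddings above.
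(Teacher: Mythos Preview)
Your proof is correct and follows essentially the same route as the paper: verify the two parts of (YF) separately, using Fermi/polar coordinates near each $\Sigma_j$ (resp.\ $\p_j$) to reduce $\|\rho^{\varepsilon-(m+2)/p}\|_p^p$ to a one-dimensional integral yielding the threshold $\varepsilon>(m+1)/p$, and use that $g_0$ extends across the removed loci to obtain $R_{g_0}\in BC^{\infty,0}(\M)$ before applying Proposition~\ref{S2: embedding} for $\varepsilon\le 2$. Two minor points: in Definition~\ref{S4.1: Torn mfd} the ambient manifold $(\sM,\tilde g)$ is only assumed \emph{uniformly regular}, not compact, so for the curvature bound you should invoke bounded geometry (as the paper does via \cite[Theorem~1.2(b)]{DisShaoSim}) rather than compactness; and your explicit treatment of the wedge ends $G_i$ actually goes beyond what the paper's own proof spells out, which concentrates on the collar regions around the $\Sigma_j$.
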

\begin{proof}
By Definition~\ref{S4.1: Torn mfd}, $(\M,g_0;\rho)$ is constructed by removing a finite family of $(m-1)$-dimensional compact submanifolds  $\boldsymbol{\Sigma}:=\{\Sigma_j:j=1,\cdots,k\}$ or a finite family of points  $\boldsymbol{\p}:=\{\p_j:j=1,\cdots,k\}$ from a {\em uniformly regular Riemannian manifold}  $(\sM, \tilde{g})$ with $\tilde{g}|_\M =g_0$. 

The proof of \cite[Theorem~1.2(b)]{DisShaoSim} also applies to the scalar curvature $R_{\tilde{g}}$. This implies that the covariant derivatives of $R_{g_0}=R_{\tilde{g}}|_\M$ are uniformly bounded, i.e., $R_{g_0}\in BC^{\infty,0}(\M)$. Hence, by taking $\varepsilon\leq 2$, we infer from Proposition~\ref{S2: embedding} that
$$R_{g_0}\in BC^{\infty,2-\varepsilon}(\M).$$
Set $\mathscr{B}_{j,r}:= \B_\sM(\Sigma_j,r)\cap \M$ with $j=1,\cdots,k$. 
By the collar neighborhood theorem, there exists an open neighborhood $\mathscr{V}_{j,\varepsilon}$ of $\partial \Sigma_j$ in  $\sM$, and a diffeomorphism $f_j$ such that 
$$f_j:  \mathscr{V}_{j,\varepsilon} \to \partial \Sigma_j \times (-\varepsilon,\varepsilon), \quad (f_j)_* \tilde{g}|_{\mathscr{V}_{j,\varepsilon}}= \tilde{g}|_{\partial \Sigma_j} + dt^2 , $$
for some $\varepsilon>0$. 
%Note that $\rho_j$ is a well defined smooth function in $\mathscr{V}_{j,\varepsilon}$ for $\varepsilon$ sufficiently small.
Let $T^\perp \partial \Sigma_j$ denote the normal bundle of $\partial \Sigma_j$ in $\sM$. At every point $\p\in \partial\Sigma_j$, there exists a unique $\nu_\p\in T^\perp_\p \partial \Sigma_j$ such that
$$ T_\p f_j \nu_\p=e_1\in T_0 \R. $$
Then, $f_j^{-1}(\p,t)=exp_\p(t\nu_\p)$, where $exp_\p$ is the exponential map at $\p$. 
%Let $\d_j(\cdot):={sf dist}(\cdot, \partial\Sigma_j)$ in $\mathscr{V}_j$.
Therefore, letting $\dot{J}_\varepsilon= (-\varepsilon,\varepsilon)\setminus \{0\}$, we have
$$(f_j)_* \rho (\p,t):= t \beta_j(\p),\quad  \text{in }  \partial\Sigma_j \times \dot{J}_\varepsilon,$$
for some $\beta_j \in C^\infty (\partial\Sigma_j)$ and $\beta_j \sim {\bf 1}_{\partial\Sigma_j}$. 
Because of the compactness of $\partial\Sigma_j$, it is easy to compute that
\begin{align*}
\| \rho^{\varepsilon-\frac{m+2}{p}}\|_p^p=\int\limits_{\partial\Sigma_j\times \dot{J}_\varepsilon} |t\beta_j(\p) |^{\varepsilon p -m-2}\, (f_j)_* d\mu_{\tilde{g}}\leq  C\int\limits_{\partial\Sigma_j\times \dot{J}_\varepsilon} |t |^{\varepsilon p -m-2} \,  dt \,d\mu_{\tilde{g}|\partial\Sigma_j}.
\end{align*}
Therefore, $\varepsilon>\frac{m+1}{p}$ will guarantee 
$$\rho^{\varepsilon-\frac{m+2}{p}}\in L_p(\M;\R).$$
The case of {\em punctured singular manifolds} follows by considering the polar coordinates around each $\p_j$.
\end{proof}

\begin{example}
\label{S5.2: YF-eg}
The following are examples of $C^2$-singular manifolds satisfying the conditions in Proposition~\ref{S5.2: YF-prop}.
\begin{itemize}
\item[(i)] Assume that $\Omega \subset \R^m$ is a bounded $C^3$-domain, $g_m$ is the Euclidean metric in $\R^m$, and $\d(\cdot):={\sf dist}(\cdot, \partial\Omega)$ near  $\partial\Omega$. The fact that $(\Omega, g_m; \d)$ is a $C^2$-singular manifold follows from \cite[Theorem~1.6]{Ama14} or \cite[Proposition~3.7]{Shao14}. 

By \cite[Corollary~4.3]{Ama14}, $(\bar{\Omega}, g_m)$ is a  uniformly regular Riemannian manifold.  Definition~\ref{S4.1: Torn mfd} implies that $(\Omega, g_m; \d)$ is a  $C^2$-torn singular manifold satisfying the conditions in Proposition~\ref{S5.2: YF-prop}, and thus the Yamabe flow~\eqref{S5: Yamabe flow eq1} admits a unique local solution on the incomplete manifold $(\M,g_0;\rho)=(\Omega, g_m; \d)$.
 
\item[(ii)] More generally, suppose that $(\sM,\tilde{g})$ is a $C^2$ compact manifold with boundary; $\partial_0\sM \sqcup \partial_1\sM =\partial\sM$ and $\partial_i\sM$ are both open. Then by Definition~\ref{S4.1: Torn mfd}, $(M,g_0):=(\sM\setminus \partial_0\sM, \tilde{g}|_\M)$ is a $C^2$-torn singular  manifold satisfying the conditions in Proposition~\ref{S5.2: YF-prop}. So Theorem~\ref{S5.1: YF-thm} holds true in this case.
In contrast to the situation in \cite{Brend02}, we do not require any boundary condition on $\partial_0\sM$.
\end{itemize}
\end{example}

%%%%%%%%%%%%%%%%%%%%%%%%%%%%%%%%%%%%%%%%%%
\section*{Acknowledgements}
The author  would like to record his debt of gratitude to Prof. Jan Pr\"uss for his inspiring and valuable suggestions.
Thanks also go to Prof. Herbert Amann for helpful discussions throughout this project and my thesis advisor, Prof. Gieri Simonett, for his guidance.

\end{document}